\newcommand{\bC}{\mathbb{C}}
\newcommand{\bR}{\mathbb{R}}
\newcommand{\cC}{\mathcal{C}}
\newcommand{\cD}{\mathcal{D}}
\newcommand{\cE}{\mathcal{E}}
\newtheorem{theorem}{Theorem}[section]
\newtheorem{lemma}[theorem]{Lemma}
\newtheorem{proposition}[theorem]{Proposition}
\newtheorem{corollary}[theorem]{Corollary}
\theoremstyle{definition}
\newtheorem{definition}[theorem]{Definition}
\newtheorem{claim}[theorem]{Claim}
\theoremstyle{remark}
\newtheorem{remark}[theorem]{Remark}
\numberwithin{equation}{section}
\begin{document}

\title[Monge-Amp\`ere equation on compact Hermitian manifolds]{The complex Monge-Amp\`ere type equation on compact Hermitian manifolds and Applications}


\author{Ngoc Cuong Nguyen}
\address{Faculty of Mathematics and Computer Science, Jagiellonian University 30-348 Krak\'ow, \L ojasiewicza 6, Poland.}
\email{Nguyen.Ngoc.Cuong@im.uj.edu.pl}


\subjclass[2010]{53C55, 32U05, 32U40}

\keywords{Monge-Amp\`ere equations, Hermitian manifolds, pluri-potential theory, weak solutions}



\begin{abstract}
We prove the existence and uniqueness of continuous solutions to the complex Monge-Amp\`ere type equation with the right hand side in $L^p$, $p>1$, on compact Hermitian manifolds. Next, we generalise results of Eyssidieux, Guedj and Zeriahi \cite{EGZ09, EGZ11} to compact Hermitian manifolds which {\em a priori} are not  in the Fujiki class. These generalisations lead to a number of applications: we obtain partial results on a conjecture of Tosatti and Weinkove \cite{TW12a} and on a weak form of a conjecture of Demailly and Paun \cite{DP04}. 
\end{abstract}

\maketitle

\bigskip

\section*{Introduction}
\label{Sect-0}

Let $(X, \omega)$ be a $n$-dimensional compact Hermitian manifold. Given a non-negative function $f \in L^p(X, \omega^n)$, $p>1$, consider the complex Monge-Amp\`ere type equation 
\begin{equation}
\label{S0-eq1}
\begin{aligned}
	(\omega + dd^c \varphi)^n = e^{\lambda \varphi} f \omega^n \\
	\omega + dd^c \varphi \geq 0, \quad \lambda \geq 0,
\end{aligned}
\end{equation}
for real-valued function $\varphi$, where  
$
	d^c = \frac{i}{2\pi} (\bar \partial - \partial), \quad 
	dd^c =\frac{i}{\pi} \partial \bar\partial.
$

In the smooth category, for $\lambda>0$, given $f$ positive and smooth Cherrier \cite{cherrier87} proved the existence and uniqueness of the smooth admissible solution. Later on,  more general results are proved in this case by Hanani \cite{hanani96a, hanani96b}. 

The problem is considerably more difficult for the case $\lambda=0$.  At that time though many important {\em a priori} estimates were derived but the uniform  estimate was still missing  for the continuity argument. Under various extra assumptions the existence of a smooth admissible solution was obtained in \cite{cherrier87}.  Recently, the topic has been revived in the works of Guan-Li \cite{GL10} and Tosatti-Weinkove \cite{TW10a, TW10b}. Ultimately, 
the existence was proved, in full generality, by providing the uniform estimate,   by Tosatti and Weinkove \cite{TW10b}. Since then, the complex Monge-Amp\`ere equation,  both the elliptic and parabolic (the Chern-Ricci flow) version,  on compact Hermitian manifold was studied extensively  (see \cite{nie13}, \cite{sun13a, sun13b}, \cite{TW12b,TW12c, TWYang13}, \cite{ZZ11}). 

On the other hand, 
by developing pluri-potential estimates on compact Hermitian manifolds Dinew and Ko\l odziej \cite{DK12} were able to give another proof of the uniform estimate which was inspired by \cite{kolodziej98}. This proof allows the right hand side  in $L^p$, $p>1$, which has had many applications in K\"ahler geometry (see e.g. \cite{EGZ09}, \cite{pss12}). This motivates the development of    weak solutions theory for the equation \eqref{S0-eq1} on compact Hermitian manifolds.  Some geometric motivations and potential applications of such an investigation are discussed in \cite{dinew14}. 

As a first step, for $\lambda=0$, the existence of a continuous solution to the equation \eqref{S0-eq1} was proved in \cite{cuong-kolodziej13}. In this case the new technical tool is needed. We wish to exploit further the results in \cite{cuong-kolodziej13} to study the complex Monge-Amp\`ere type equation. It may help to understand deeper some open problem in Hermitian geometry.

For the case $\lambda>0$, our first result is an extension of a theorem of Cherrier \cite{cherrier87} to the degenerate right hand side.

\begin{theorem}
\label{S0-theo1}
Let $(X, \omega)$ be a $n$-dimensional compact Hermitian manifold. Let $0\leq f \in L^p(X, \omega^n)$, $p>1$, be such that $\int_X f \omega^n >0$. Fix $\lambda>0$. Then, there exist a unique continuous real-valued function $\varphi$ on $X$ satisfying
\begin{equation*}
\label{S0-eq2}
	(\omega + dd^c \varphi)^n = e^{\lambda \varphi} f \omega^n, \quad 
	\omega + dd^c \varphi \geq 0,
\end{equation*}
in the weak sense of currents.
\end{theorem}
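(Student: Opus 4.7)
The plan is to solve the equation by smooth approximation combined with a uniform $L^\infty$ bound robust enough to pass to the limit. Pick smooth, strictly positive $f_j \in C^\infty(X)$ with $f_j \to f$ in $L^p(X, \omega^n)$ and $\int_X f_j \omega^n \geq \tfrac12 \int_X f \omega^n > 0$. By Cherrier's theorem \cite{cherrier87}, for each $j$ there is a unique smooth admissible solution $\varphi_j$ of
\[
	(\omega + dd^c \varphi_j)^n = e^{\lambda \varphi_j} f_j \, \omega^n.
\]
The heart of the argument is to prove $\|\varphi_j\|_\infty \leq C$ uniformly in $j$; the continuous solution is then extracted as a limit, and uniqueness follows by a comparison argument.

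For the \emph{upper bound}, set $M_j := \max_X \varphi_j$ and $u_j := \varphi_j - M_j \leq 0$, so that $(\omega + dd^c u_j)^n = e^{\lambda M_j} e^{\lambda u_j} f_j \, \omega^n$. The family $\{u_j\}$ of $\omega$-psh functions with $\sup u_j = 0$ is compact in $L^1(X)$, and a Skoda-type integrability estimate yields $|\{u_j < -A\}| \leq C e^{-\alpha A}$ uniformly in $j$. Combined with the uniform total-mass bound $\int_X (\omega + dd^c u_j)^n \leq C(\omega)$ for such $\omega$-psh functions on a Hermitian manifold \cite{DK12, cuong-kolodziej13}, integration of the equation gives
\[
	C(\omega) \geq e^{\lambda M_j}\int_X e^{\lambda u_j} f_j\, \omega^n \geq e^{\lambda M_j} e^{-\lambda A}\left(\int_X f_j\,\omega^n - \|f_j\|_p\,|\{u_j<-A\}|^{1-1/p}\right),
\]
and the bracketed quantity stays bounded below by $\tfrac14\int f\,\omega^n > 0$ for $A$ and $j$ large, forcing $M_j$ bounded. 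For the \emph{lower bound}, the density $h_j := e^{\lambda \varphi_j} f_j$ is now uniformly bounded in $L^p$, and the $L^p$--$L^\infty$ estimate of \cite{cuong-kolodziej13} applied to $\varphi_j$ as a solution of $(\omega + dd^c \varphi_j)^n = h_j\omega^n$ yields $\inf_X \varphi_j \geq -C$.

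With the uniform bound in hand, I would apply the Hermitian stability estimate of \cite{cuong-kolodziej13}, adapted to absorb the factor $e^{\lambda \varphi}$ (using that $|e^{\lambda \varphi_j} - e^{\lambda \varphi_k}| \leq \lambda e^{\lambda C} |\varphi_j - \varphi_k|$ on the uniformly bounded family), to conclude that $\{\varphi_j\}$ is Cauchy in $C^0(X)$. The uniform limit $\varphi$ is continuous $\omega$-psh and, by the Bedford--Taylor type convergence of Monge--Amp\`ere masses under uniform convergence, solves the equation weakly. Uniqueness follows from the strict monotonicity $\lambda > 0$: if $\varphi, \psi$ are two continuous solutions and $U := \{\varphi > \psi\}$ is nonempty, then $e^{\lambda \varphi} f > e^{\lambda \psi} f$ on $U \cap \{f>0\}$, and the Hermitian comparison principle of \cite{cuong-kolodziej13} applied on $\{\varphi > \psi + \varepsilon\}$ produces a contradiction.

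\textbf{Main obstacle.} The delicate step is the uniform upper bound $M_j \leq C$. On a non-K\"ahler Hermitian manifold one cannot rely on the topological identity $\int_X (\omega + dd^c \varphi)^n = \int_X \omega^n$ that provides the usual normalization in the K\"ahler setting. Replacing it by the uniform total-mass bound for $\omega$-psh functions, together with $L^1$-compactness of the normalized family $u_j$, is exactly where the Hermitian pluri-potential machinery of \cite{DK12, cuong-kolodziej13} is essential; once this step is secured, the remainder of the proof follows familiar patterns.
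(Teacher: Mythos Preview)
Your overall strategy---approximate $f$ by smooth positive $f_j$, solve via Cherrier, establish a uniform $L^\infty$ bound, and pass to the limit through the stability estimate of \cite{cuong-kolodziej13}---is exactly the paper's. The convergence and uniqueness sketches are fine. The trouble is in the uniform bound itself, where both halves of your argument have gaps precisely at the points where the Hermitian (non-K\"ahler) setting matters.

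\emph{Upper bound.} You invoke a ``uniform total-mass bound $\int_X(\omega+dd^c u_j)^n\leq C(\omega)$'' for $\omega$-psh functions with $\sup_X u_j=0$. That bound is not available from the references you cite: the CLN-type inequalities of \cite{DK12} (see Remark~\ref{cap-rk}) are stated for functions normalized to $0\leq\varphi\leq 1$, and on a general Hermitian manifold the total Monge--Amp\`ere mass depends on $\|u\|_\infty$ (this is the content of Proposition~\ref{estimate-mass}). Since $\|u_j\|_\infty$ is exactly what you are trying to control, the step is circular. The paper sidesteps total mass entirely: it integrates the pointwise mixed inequality $\omega_{\varphi_j}\wedge\omega^{n-1}\geq e^{\lambda\varphi_j/n}f_j^{1/n}\omega^n$ (Lemma~\ref{mixine}) against the Gauduchon weight $e^G$, so that $dd^c(e^G\omega^{n-1})=0$ kills the $dd^c\varphi_j$ term exactly, and then applies Jensen together with the $L^1$-bound of Corollary~\ref{l1-uni-bound}. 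This is Proposition~\ref{sup-bound-general}, and it gives $M_j\leq C$ with no appeal to any mass bound.

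\emph{Lower bound.} Once $M_j\leq C$, the right-hand side is indeed uniformly in $L^p$, but the estimate of \cite[Corollary~5.6]{cuong-kolodziej13} is an \emph{oscillation} estimate: applied to $\psi_j=\varphi_j-M_j$ it gives $-H\leq\psi_j\leq 0$, i.e.\ $\varphi_j\geq M_j-H$, not $\inf_X\varphi_j\geq -C$ outright. You still need $M_j$ bounded from below. The paper supplies this by solving $(\omega+dd^c w_j)^n=c_jf_j\omega^n$ via \cite[Theorem~0.1]{cuong-kolodziej13}, using the uniform lower bound $c_j\geq c_0>0$ from \cite[Lemma~5.9]{cuong-kolodziej13}, and noting that $\omega_{\psi_j}^n\leq e^{\lambda M_j}f_j\omega^n=(e^{\lambda M_j}/c_j)\,\omega_{w_j}^n$ forces $e^{\lambda M_j}/c_j\geq 1$ by the modified comparison principle (Corollary~\ref{const-comp}), hence $M_j\geq\lambda^{-1}\log c_0$.

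In short, you correctly located the main obstacle, but the substitute you propose for the missing K\"ahler volume identity does not hold; the paper's Gauduchon-metric trick and its use of the constant $c_j$ from the $\lambda=0$ equation are the ingredients that actually close the argument.
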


Notice that we do not need to multiply the right hand side with a suitable constant as we do in the case $\lambda =0$ (see \cite{TW10b}, \cite{cuong-kolodziej13}). The following result will shed light on that mysterious constant.

\begin{corollary}
\label{S0-cor1} Under assumptions of Theorem~\ref{S0-theo1}.
For each $0< \varepsilon \leq 1$ let $\varphi_\varepsilon$ denote the unique continuous function  solving
\begin{equation*}
\label{S0-eq3}
	(\omega + dd^c \varphi_\varepsilon)^n = e^{\varepsilon \varphi_\varepsilon} f \omega^n,  \quad \omega + dd^c \varphi_\varepsilon \geq 0.
\end{equation*}
Suppose that  a continuous real-valued function $\varphi$ on $X$ and a constant $c>0$ solve 
\[
	(\omega + dd^c \varphi)^n = c f \omega^n, \quad
	\omega + dd^c \varphi \geq 0.
\]
Then, for any fixed $x\in X$,
\[
	c = \lim_{\varepsilon \to 0} e^{\varepsilon \varphi_\varepsilon(x)}.
\]
In particular, the constant $c$ in \cite[Theorem 0.1]{cuong-kolodziej13} is uniquely defined.
\end{corollary}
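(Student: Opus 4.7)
The plan is to show that $\varphi_\varepsilon - (\log c)/\varepsilon$ stays uniformly close to $\varphi$; specifically, I aim for the estimate $\|\varphi_\varepsilon - \varphi - (\log c)/\varepsilon\|_{L^\infty(X)} \leq \|\varphi\|_{L^\infty(X)}$ independently of $\varepsilon$. This gives $|\varepsilon \varphi_\varepsilon(x) - \log c| \leq 2\varepsilon \|\varphi\|_{L^\infty(X)}$ pointwise, hence $e^{\varepsilon \varphi_\varepsilon(x)} \to c$ uniformly as $\varepsilon \to 0$; the identification of $c$ with this limit, which does not involve the particular $\varphi$, in turn gives the uniqueness assertion for the constant in \cite[Theorem 0.1]{cuong-kolodziej13}.

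The core of the argument will be an extremum comparison. I set $u_\varepsilon := \varphi + (\log c)/\varepsilon$, so that $(\omega + dd^c u_\varepsilon)^n = c f \omega^n$, matching the exponential factor on the right-hand side of the $\varphi_\varepsilon$-equation at level $c$. At a maximum point $x_0$ of $\varphi_\varepsilon - u_\varepsilon$ one has the matrix inequality $dd^c(\varphi_\varepsilon - u_\varepsilon)(x_0) \leq 0$, yielding $(\omega + dd^c \varphi_\varepsilon)^n(x_0) \leq (\omega + dd^c u_\varepsilon)^n(x_0)$. Substituting the two Monge-Amp\`ere equations and dividing by $f(x_0) > 0$ gives $e^{\varepsilon \varphi_\varepsilon(x_0)} \leq c$, hence $\varphi_\varepsilon(x_0) - u_\varepsilon(x_0) \leq -\varphi(x_0) \leq \|\varphi\|_{L^\infty(X)}$, which bounds $\sup_X(\varphi_\varepsilon - u_\varepsilon)$. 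The symmetric argument at a minimum of $\varphi_\varepsilon - u_\varepsilon$ will give the corresponding lower bound.

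The main obstacle is making this extremum comparison rigorous, since $\varphi_\varepsilon$ and $\varphi$ are merely continuous (so $dd^c$ is only a current) and $f \in L^p$ may vanish at the extrema. I would handle this by approximating $f$ with smooth strictly positive $f_j \to f$ in $L^p$, solving the $\lambda = \varepsilon$ equation classically via \cite{cherrier87, hanani96a} and the $\lambda = 0$ equation via \cite{TW10b} with datum $f_j$ to obtain smooth solutions $\varphi_{\varepsilon, j}$ and $(\varphi_j, c_j)$; the smooth extremum comparison above then yields a uniform bound $\|\varphi_{\varepsilon, j} - \varphi_j - (\log c_j)/\varepsilon\|_{L^\infty(X)} \leq \|\varphi_j\|_{L^\infty(X)}$, after which the stability of continuous solutions in the $L^p$ datum, supplied by Theorem \ref{S0-theo1} in the $\lambda = \varepsilon$ case and by \cite{cuong-kolodziej13} in the $\lambda = 0$ case, lets me pass to $j \to \infty$. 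The most delicate point will be engineering the approximation so that $(\varphi_j, c_j)$ converges to the prescribed pair $(\varphi, c)$ rather than to some other solution pair, since uniqueness of the constant at $\lambda = 0$ is precisely what is being proved and cannot be invoked here.
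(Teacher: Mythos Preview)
Your extremum comparison is correct and elegant in the smooth setting, and the resulting bound $|\varepsilon\varphi_\varepsilon(x)-\log c|\leq 2\varepsilon\|\varphi\|_\infty$ is in fact sharper than what the paper obtains. The problem is exactly the one you flag in your last sentence, and it is a genuine gap rather than a technicality. When you smooth the datum to $f_j$ and solve the $\lambda=0$ equation, the pairs $(\varphi_j,c_j)$ converge (after normalisation, via \cite[Corollary~5.10, Lemma~5.9]{cuong-kolodziej13}) to \emph{some} solution pair $(\tilde\varphi,\tilde c)$; your argument then yields $e^{\varepsilon\varphi_\varepsilon(x)}\to\tilde c$. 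But nothing in your scheme ties $\tilde c$ to the \emph{given} constant $c$: that identification is precisely the uniqueness statement you are trying to prove, and there is no way to ``engineer'' the approximation to hit a prescribed continuous solution that you only know exists abstractly. So the proposal, as it stands, proves the corollary only for those pairs $(\varphi,c)$ that arise as limits of smooth approximations, which begs the question.

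The paper avoids this circularity by replacing the smooth maximum principle with the \emph{modified comparison principle} \cite[Theorem~0.2]{cuong-kolodziej13}, which applies directly to bounded (not smooth) $\omega$-psh functions. Concretely, Corollary~\ref{const-comp} (a one-line consequence of the proof of Lemma~\ref{decreasing-property}) says that $\omega_u^n\leq c'\,\omega_v^n$ forces $c'\geq 1$; applied both ways to the given $\varphi$ and to $\varphi_\varepsilon$ it gives $e^{\varepsilon\inf_X\varphi_\varepsilon}\leq c\leq e^{\varepsilon\sup_X\varphi_\varepsilon}$ with no smoothness needed. The paper then bounds $\varepsilon\sup_X\varphi_\varepsilon$ from above via Proposition~\ref{sup-bound-general} (Gauduchon trick plus Jensen), deduces a uniform oscillation bound from \cite[Corollary~5.6]{cuong-kolodziej13}, passes to a uniform limit $(\psi,e^M)$ along a subsequence, and finally invokes Corollary~\ref{const-comp} once more to identify $e^M=c$. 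It is worth noting that your sharper two-sided estimate can in fact be proved directly for continuous solutions by running the argument of Lemma~\ref{decreasing-property} with $w:=\varphi-\inf_X\varphi+(\log c)/\varepsilon$ in place of one of the competitors; on the set $\{\varphi_\varepsilon>w\}$ one has $e^{\varepsilon\varphi_\varepsilon}>c$, and the modified comparison principle then yields a contradiction exactly as in that lemma. So your strategy is salvageable, and even gives a shorter route than the paper's compactness argument, but the rescue requires the pluripotential comparison principle rather than smooth approximation.
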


Tsuji \cite{tsuji88} and Tian-Zhang \cite{tian-zhang06} studied the K\"ahler-Ricci flow (parabolic Monge-Amp\`ere equation) and 
 its degenerate form on minimal algebraic varieties of general type. Later, the "finite energy" approach was introduced by Eyssidieux-Guedj-Zeriahi \cite{GZ05, GZ07, EGZ09} (see also \cite{dinew-zhang10}) building on the seminal works of Cegrell and Ko\l odziej \cite{cegrell98}, \cite{kolodziej98, kolodziej03}. Further advances in  \cite{EGZ09} or \cite{EGZ11} require the manifold to be at least in the Fujiki class, i.e. bimeromorphic to a K\"ahler manifold. Using recent results in \cite{DK12}, \cite{cuong-kolodziej13}, we are able to relax those assumptions. 

\begin{theorem}
\label{S0-theo2}
Let $(X, \omega)$ be a $n$-dimensional compact Hermitian manifold. Assume that $\beta\geq 0$  (semi-positive) is a smooth closed $(1,1)$-form on $X$  such that $\int_X \beta^n> 0$. Let $0\leq f \in L^p(X, \omega^n)$, $p>1$, be such that $\int_X f \omega^n>0$. Then, there exists a unique continuous real-valued function $\varphi$ solving
\begin{equation}
\label{S0-eq6}
	(\beta + dd^c \varphi)^n = e^\varphi f\omega^n, \quad
	\beta + dd^c \varphi \geq 0
\end{equation}
in the weak sense of currents.
\end{theorem}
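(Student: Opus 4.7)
The strategy is to reduce the degenerate semi-positive problem to the non-degenerate Hermitian case handled by Theorem~\ref{S0-theo1} via a vanishing perturbation, and then pass to the limit using the pluripotential estimates of \cite{DK12, cuong-kolodziej13}. For $0 < \varepsilon \le 1$ set $\beta_\varepsilon := \beta + \varepsilon \omega$, a smooth Hermitian metric on $X$. Rewriting the right-hand side of \eqref{S0-eq6} as $e^{\varphi_\varepsilon} \tilde f_\varepsilon \beta_\varepsilon^n$ with $\tilde f_\varepsilon := f\omega^n/\beta_\varepsilon^n$, one has $\tilde f_\varepsilon \in L^p(X,\beta_\varepsilon^n)$ (since $\beta_\varepsilon^n \ge \varepsilon^n \omega^n$) and $\int_X \tilde f_\varepsilon \beta_\varepsilon^n = \int_X f\omega^n > 0$; Theorem~\ref{S0-theo1}, applied with reference Hermitian form $\beta_\varepsilon$ and $\lambda = 1$, then yields a unique continuous $\varphi_\varepsilon$ with $\beta_\varepsilon + dd^c\varphi_\varepsilon \ge 0$ solving $(\beta_\varepsilon + dd^c\varphi_\varepsilon)^n = e^{\varphi_\varepsilon} f \omega^n$.

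The main technical obstacle is the uniform bound $\|\varphi_\varepsilon\|_{L^\infty(X)} \le C$. For the upper bound, at any interior maximum $x_\varepsilon$ one has $(\beta_\varepsilon + dd^c\varphi_\varepsilon)^n(x_\varepsilon) \le \beta_\varepsilon^n(x_\varepsilon) \le C_0\,\omega^n(x_\varepsilon)$, giving $e^{\varphi_\varepsilon(x_\varepsilon)}f(x_\varepsilon) \le C_0$; this point-wise control must be promoted to a global bound by testing against a sub-level set on which $f$ carries positive mass (which exists since $\int_X f\omega^n > 0$), exploiting the sub-mean-value property of $\beta_\varepsilon$-plurisubharmonic functions. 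The lower bound is the harder part: writing $\psi_\varepsilon := \varphi_\varepsilon - \inf_X \varphi_\varepsilon \ge 0$ turns the equation into
\[
(\beta_\varepsilon + dd^c\psi_\varepsilon)^n = e^{\inf_X \varphi_\varepsilon} e^{\psi_\varepsilon} f\omega^n,
\]
and I would invoke the Hermitian $L^\infty$ estimate of \cite{DK12, cuong-kolodziej13}, whose capacity argument depends only on the non-degeneracy $\int_X \beta^n > 0$ and not on strict positivity of $\beta_\varepsilon$, to bound $\|\psi_\varepsilon\|_\infty$ in terms of $e^{\inf_X\varphi_\varepsilon}\|f\|_{L^p}$; together with the upper bound this pins $\inf_X\varphi_\varepsilon$ from below.

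With $\|\varphi_\varepsilon\|_\infty \le C$ in hand, the family $\{\varphi_\varepsilon\}$ is relatively compact in $L^1(X)$ inside the cone of $(\beta + \omega)$-plurisubharmonic functions, so along a subsequence $\varphi_\varepsilon \to \varphi \in L^\infty(X)$ with $\varphi$ $\beta$-plurisubharmonic. A common modulus of continuity for $\{\varphi_\varepsilon\}$, produced by applying the stability estimate of \cite{cuong-kolodziej13} with the fixed Hermitian form $\beta + \omega$ as reference, upgrades the convergence to uniform and shows $\varphi$ is continuous; weak continuity of the complex Monge--Amp\`ere operator along uniformly convergent bounded sequences of quasi-plurisubharmonic functions then passes \eqref{S0-eq6} to the limit in the sense of currents.

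Uniqueness follows from the Hermitian comparison principle of \cite{cuong-kolodziej13}: two continuous solutions $\varphi_1, \varphi_2$ satisfy
\[
\int_{\{\varphi_1 < \varphi_2\}} (e^{\varphi_2} - e^{\varphi_1})\,f\,\omega^n \le 0,
\]
which forces $\varphi_1 \ge \varphi_2$ on $\{f>0\}$; interchanging the two functions gives $\varphi_1 = \varphi_2$ on $\{f>0\}$, and continuity together with \eqref{S0-eq6} extends the equality to all of $X$.
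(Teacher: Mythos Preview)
Your overall strategy --- perturb $\beta$ to the Hermitian metric $\beta_\varepsilon = \beta + \varepsilon\omega$, solve via Theorem~\ref{S0-theo1}, then let $\varepsilon\to 0$ --- is exactly the paper's. But the core step, the uniform bound $\|\varphi_\varepsilon\|_\infty \le C$, has a genuine gap.

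The Hermitian $L^\infty$ estimate of \cite{cuong-kolodziej13} that you invoke is \emph{not} independent of the reference metric: it goes through the modified comparison principle \cite[Theorem~0.2]{cuong-kolodziej13}, which carries an error factor $(1 + Cs/\varepsilon^n_0)$ where $\varepsilon_0$ depends on the ``curvature'' constant $B_{\beta_\varepsilon}$. Since $d\beta_\varepsilon = \varepsilon\, d\omega$, the relevant constant (in the sense of \eqref{curvature}, with $\beta_\varepsilon$ playing both roles) behaves like $B/\varepsilon$, and the range of admissible $s$ in the Ko\l odziej iteration collapses. Concretely, the paper's Lemma~\ref{growth-phi} carries out precisely this computation and obtains only
\[
\|\varphi_\varepsilon - \sup_X\varphi_\varepsilon\|_\infty \le C\bigl(\log\tfrac{1}{\varepsilon}\bigr)^n,
\]
which diverges. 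Your alternative idea --- use the volume--capacity inequality for $cap_\beta$ with $\beta$ closed --- does not close the gap either, because $\psi_\varepsilon$ is only $(\beta+\varepsilon\omega)$-psh, not $\beta$-psh, so the classical (Bedford--Taylor) comparison principle underlying \cite[Theorem~2.1]{EGZ09} is unavailable for $\psi_\varepsilon$. This is exactly why the paper takes a detour: it shows (using Lemma~\ref{decreasing-property-2}, Proposition~\ref{estimate-mass}, Proposition~\ref{in-e1-class}) that $\varphi_\varepsilon\searrow\varphi\in\cE^1(X,\beta)$, and only \emph{then}, with $\varphi$ genuinely $\beta$-psh and $\beta$ closed, applies the Ko\l odziej/EGZ bound and the viscosity method of \cite{EGZ11} to get continuity.

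Two smaller issues. Your upper bound via the maximum principle evaluates $f$ at a point, which is meaningless for $f\in L^p$; the paper instead uses the Gauduchon metric and Jensen's inequality (Proposition~\ref{sup-bound-general}). And your uniqueness argument only yields $\varphi_1=\varphi_2$ a.e.\ on $\{f>0\}$; if $\{f=0\}$ has interior, continuity alone does not extend the equality --- one needs the full domination/comparison principle for $\beta$-psh functions (here $\beta$ is closed, so this is classical, cf.\ Remark~\ref{uniqueness-big-case} and \cite{dinew-zhang10}).
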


The key ingredient to prove Theorem~\ref{S0-theo2} is a very precise uniform estimate for the complex Monge-Amp\`ere equation on compact Hermitian manifolds given in \cite{cuong-kolodziej13}. Once  the Monge-Amp\`ere type equation \eqref{S0-eq6} is solvable, we can adapt  arguments \cite{kolodziej03} to get the stability estimates (see \cite{dinew-zhang10}, \cite{EGZ09, EGZ11}). We get the following 

\begin{corollary}
\label{S0-cor2}
Under assumptions of Theorem~\ref{S0-theo2}. Assume that 
\[
	\int_X \beta^n  = \int_X f \omega^n.
\]
Then, there exists a unique continuous real-valued function $\varphi$, $\sup_X \varphi =0$, solving
\[
	(\beta + dd^c \varphi)^n = f\omega^n, \quad
	\beta + dd^c \varphi \geq 0.
\]
\end{corollary}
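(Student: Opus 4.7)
The plan is to obtain the solution as the limit $\varepsilon \to 0^{+}$ of the approximating family
\[
	(\beta + dd^c \varphi_\varepsilon)^n = e^{\varepsilon \varphi_\varepsilon} f \omega^n, \qquad \beta + dd^c \varphi_\varepsilon \geq 0,
\]
each member of which is solvable by Theorem~\ref{S0-theo2}. Indeed, the rescaling $\psi_\varepsilon := \varepsilon \varphi_\varepsilon$ transforms it into $(\varepsilon \beta + dd^c \psi_\varepsilon)^n = e^{\psi_\varepsilon}(\varepsilon^n f)\omega^n$, which is an instance of \eqref{S0-eq6}: the form $\varepsilon\beta$ remains closed, semi-positive, and satisfies $\int_X(\varepsilon\beta)^n>0$, while $\varepsilon^n f \in L^p(X,\omega^n)$ is non-negative with positive integral. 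Theorem~\ref{S0-theo2} thus yields a unique continuous $\varphi_\varepsilon$ for every $\varepsilon>0$.

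Normalise by $M_\varepsilon := \sup_X \varphi_\varepsilon$ and $\tilde\varphi_\varepsilon := \varphi_\varepsilon - M_\varepsilon \leq 0$, so that
\[
	(\beta + dd^c \tilde\varphi_\varepsilon)^n = e^{\varepsilon M_\varepsilon}\, e^{\varepsilon \tilde\varphi_\varepsilon}\, f \omega^n.
\]
Because $\beta$ is closed, integration by parts gives $\int_X(\beta + dd^c \varphi_\varepsilon)^n = \int_X \beta^n = \int_X f\omega^n$, which together with the equation forces $M_\varepsilon \geq 0$ and the identity $e^{\varepsilon M_\varepsilon} \int_X e^{\varepsilon \tilde\varphi_\varepsilon} f \omega^n = \int_X f\omega^n$. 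The quasi-plurisubharmonic functions $\tilde\varphi_\varepsilon$ with $\sup = 0$ are uniformly bounded in $L^1(X,\omega^n)$ by standard compactness, so $\varepsilon \tilde\varphi_\varepsilon \to 0$ in $L^1$ and, along a subsequence, $e^{\varepsilon\tilde\varphi_\varepsilon} \to 1$ almost everywhere. Dominated convergence (using $0 < e^{\varepsilon\tilde\varphi_\varepsilon} \leq 1$ and $f\in L^1$) then yields $e^{\varepsilon M_\varepsilon} \to 1$.

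Consequently the densities $g_\varepsilon := e^{\varepsilon M_\varepsilon}\,e^{\varepsilon \tilde\varphi_\varepsilon} f$ are eventually bounded in $L^p$ by $2\|f\|_{L^p}$. Applying the uniform pluri-potential estimate of \cite{cuong-kolodziej13}, extended to the degenerate semi-positive class $\beta$ (the extension being the technical core underlying Theorem~\ref{S0-theo2}), one obtains $\|\tilde\varphi_\varepsilon\|_{L^\infty(X)} \leq C$ uniformly in $\varepsilon$. A stability estimate adapted from \cite{kolodziej03} (compare \cite{dinew-zhang10}, \cite{EGZ09}, \cite{EGZ11}) then upgrades a subsequence to uniform convergence $\tilde\varphi_{\varepsilon_j} \to \tilde\varphi$, with $\tilde\varphi$ continuous, $\beta$-plurisubharmonic, and $\sup_X \tilde\varphi = 0$. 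Passing to the limit in the equation via continuity of the Monge-Amp\`ere operator along uniformly convergent bounded $\beta$-psh sequences and dominated convergence on the right-hand side produces $(\beta + dd^c \tilde\varphi)^n = f\omega^n$. Uniqueness of the normalised solution follows from the same stability/comparison argument.

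The main obstacle is this uniform $L^\infty$ bound in the degenerate setting: since $\beta$ is only semi-positive, the Dinew--Ko\l odziej--Nguyen type pluri-potential estimates cannot be applied verbatim to $\beta$-plurisubharmonic functions, and one must perturb $\beta$ by $\delta\omega$ and carefully track how the constants depend on $\delta$, relying crucially on the bigness hypothesis $\int_X\beta^n > 0$. Promoting the weak/$L^1$ convergence of $\tilde\varphi_\varepsilon$ to $C^0$ convergence in the last step likewise rests on the stability/comparison machinery on Hermitian manifolds indicated in the paragraph preceding the corollary.
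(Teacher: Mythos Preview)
Your proof is correct and follows the same route as the paper's Theorem~\ref{existence-dmae-2}: solve the $\varepsilon$-perturbed equation via Theorem~\ref{S0-theo2}, normalise by $M_\varepsilon=\sup_X\varphi_\varepsilon$, bound $\varepsilon M_\varepsilon$ (the paper uses Jensen's inequality and Corollary~\ref{l1-uni-bound} rather than your dominated-convergence argument, but to the same effect), obtain a uniform $L^\infty$ bound on the normalised potentials, and upgrade $L^1$-compactness to uniform convergence via the $L^\infty$--$L^1$ stability estimate of \cite{kolodziej03, EGZ09, dinew-zhang10}. One clarification on your final paragraph: because $\beta$ is \emph{closed}, the uniform $L^\infty$ bound is not the obstacle you describe---the volume--capacity inequality for $cap_\beta$ (Proposition~\ref{vol-cap-ine}) and \cite[Theorem~2.1]{EGZ09} apply directly, so no further perturbation by $\delta\omega$ is needed here; that Hermitian perturbation work is confined to the proof of Theorem~\ref{S0-theo2} itself.
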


Notice that in both results above we still need to invoke the viscosity approach due to Eyssidieux-Guedj-Zeriahi \cite{EGZ11} for the continuity of solutions. 

\begin{remark}
So far the conjecture \cite[Conjecture 0.8]{DP04} is unsolved, so our results are of interest. If the conjecture was true, under the assumptions of Theorem~\ref{S0-theo2} and Corollary~\ref{S0-cor2},  the manifold would belong to the Fujiki class. 
\end{remark}

The solvability of the complex Monge-Amp\`ere equation with the metric being semi-positive gives a number of applications. First, we give a partial verification of a conjecture of Tosatti-Weinkove \cite{TW12a}.

\begin{theorem}
\label{S0-tw-conjecture}
Let $X$ be a $n$-dimensional compact complex manifold. Suppose there exists a 
class $\{\beta\} \in H^{1,1}_{BC} (X, \bR)$ which is semi-positive and satisfies 
$\int_X \beta^n >0$. Let $x_1, ..., x_N \in X$ be fixed points and let
$\tau_1, ..., \tau_N$ be positive real numbers so that 
\begin{equation}
\label{tw-assumption}
	\sum_{j=1}^N \tau_j^n < \int_X \beta^n.
\end{equation}
Then there exists a $\beta$-plurisubharmonic function $\varphi$ with logarithmic
poles at $x_1,..., x_N$:
\[
	\varphi(z) \leq \tau_j \log |z| + O(1),
\]
in a coordinate neighbourhood $(z_1, ..., z_n)$ centered at $x_j$, where 
$|z|^2 = |z_1|^2+...+|z_n|^2$.
\end{theorem}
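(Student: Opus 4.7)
The plan is to realise $\varphi$ as a limit of solutions to a regularised Monge-Amp\`ere equation. In local coordinates near each pole $x_j$, define the smoothed logarithmic potential $\psi_{j,\delta}(z) = \tau_j\, \alpha_\delta(\log|z-x_j|)$, where $\alpha_\delta : \bR \to \bR$ is smooth, convex, non-decreasing, equal to $t$ for $t \geq \log(2\delta)$ and constant for $t \leq \log\delta$. Then $\psi_{j,\delta}$ is $C^\infty$ and plurisubharmonic, agrees with $\tau_j \log|z - x_j|$ outside $\{|z-x_j| \leq 2\delta\}$, and $(dd^c \psi_{j,\delta})^n = h_{j,\delta}\,\omega^n$ for a smooth non-negative density $h_{j,\delta}$ supported in the annulus $\{\delta \leq |z-x_j| \leq 2\delta\}$ of total mass $\tau_j^n$ (under the normalisation $(dd^c\log|z|)^n = \delta_0$). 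Setting
\[
 f_\delta \;:=\; \sum_{j=1}^N h_{j,\delta} + a_\delta,
\]
with $a_\delta>0$ chosen so that $\int_X f_\delta\,\omega^n = \int_X \beta^n$ (possible for small $\delta$ by \eqref{tw-assumption}), Corollary~\ref{S0-cor2} produces a unique continuous $\beta$-plurisubharmonic solution $\varphi_\delta$ with $(\beta + dd^c \varphi_\delta)^n = f_\delta\,\omega^n$ and $\sup_X \varphi_\delta = 0$.

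The heart of the argument is a uniform local upper bound on $\varphi_\delta$ near each pole. Shrinking to a ball $B = B(x_j, r_0)$ on which $\beta = dd^c u_j$ for a smooth $u_j$, the function $w_\delta := u_j + \varphi_\delta$ is plurisubharmonic on $B$ and
\[
 (dd^c w_\delta)^n \;=\; f_\delta\,\omega^n \;\geq\; h_{j,\delta}\,\omega^n \;=\; (dd^c \psi_{j,\delta})^n.
\]
Since $\varphi_\delta\leq 0$ and $u_j$ is bounded on $\overline{B}$, $w_\delta$ is uniformly bounded above on $\partial B$, whereas $\psi_{j,\delta}|_{\partial B} = \tau_j\log r_0$ as soon as $\delta < r_0/2$. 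Hence there exists a constant $C$ independent of $\delta$ with $w_\delta \leq \psi_{j,\delta}+C$ on $\partial B$; the Bedford-Taylor comparison principle propagates this to all of $B$, giving
\[
 \varphi_\delta(z) \;\leq\; \tau_j\,\alpha_\delta(\log|z-x_j|) + C' \qquad \text{on } B,
\]
with $C'$ independent of $\delta$.

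Finally, $\{\varphi_\delta\}$ sits inside the compact family of $A\omega$-plurisubharmonic functions with supremum $0$ (for any $A$ with $\beta \leq A\omega$), so some subsequence $\varphi_{\delta_k}$ converges in $L^1(X,\omega^n)$ and almost everywhere to a function whose upper semicontinuous regularisation $\varphi$ is $\beta$-plurisubharmonic with $\sup_X \varphi = 0$. Taking the $\limsup$ in the local bound above yields $\varphi(z) \leq \tau_j\log|z-x_j| + C'$ near each $x_j$, which is the required singularity. The main obstacle I anticipate is the $L^1$-compactness of $\beta$-plurisubharmonic functions on a general compact Hermitian manifold: the reduction to $\omega$-plurisubharmonic functions is clean, but the underlying compactness in the non-K\"ahler setting requires the integration-by-parts estimates of \cite{DK12, cuong-kolodziej13} rather than any K\"ahler-specific machinery. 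Once this is in place, the comparison step itself is a routine application of local Bedford-Taylor theory.
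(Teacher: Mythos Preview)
Your proof is correct and follows essentially the same route as the paper: Demailly's mass concentration technique, solving the degenerate Monge--Amp\`ere equation with a smoothed Dirac-type right-hand side (via Corollary~\ref{S0-cor2}, which the paper invokes through the equivalent Theorem~\ref{existence-dmae-2}), followed by a local Bedford--Taylor comparison against the model potential $\tau_j\,\alpha_\delta(\log|z-x_j|)$ to force the logarithmic upper bound, and finally an $L^1$-compactness limit. The only cosmetic differences are the parametrisation of the smoothing ($\alpha_\delta(\log|z|)$ versus the paper's $\chi(\log(|z|/\varepsilon))$) and your explicit flagging of the $L^1$-compactness issue, which the paper takes for granted; your reduction to $A\omega$-psh functions and the estimates in \cite{DK12} is exactly the right justification.
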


Here we require the class $\{\beta\}$ to be semi-positive which is stronger than {\em nef}  in the  conjecture (see \cite{TW12a}). Tosatti and Weinkove proved their conjecture for $n=2,3$ and they proved for general $n\geq 4$ under the different assumptions: $X$ is Moishezon and $\{\beta\}$ is rational.

The second application is a partial result on the weak form of a conjecture of Demailly and Paun \cite{DP04}. 
Recently, Chiose \cite{chiose13} and Popovici \cite{popovici14} simplified some arguments in \cite{DP04}. Thanks to these simplifications and our results above we get the following.

\begin{theorem}
\label{S0-dp-semipositive}
Let $(X, \omega)$ be a $n$-dimensional compact complex manifold equipped with the pluriclosed metric $\omega$, i.e. $dd^c \omega =0$. Assume that $\{\beta\} \in H^{1,1}_{BC} (X, \bR)$ is a semi-positive cohomology class satisfying  
$\int_X \beta^n > 0$. Then $\{\beta\}$ contains a K\"ahler current $T$, i.e. $T\geq \delta \omega$ for some $\delta >0$.
\end{theorem}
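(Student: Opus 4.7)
I would follow the strategy of Chiose~\cite{chiose13} and Popovici~\cite{popovici14}, which reduces the construction of a K\"ahler current in a class of positive volume to a complex Monge--Amp\`ere solvability statement---exactly what Theorem~\ref{S0-theo2} now supplies in the non-K\"ahler setting. I argue by contradiction: assume that $\{\beta\}$ contains no K\"ahler current. For each $\varepsilon>0$, apply Theorem~\ref{S0-theo2} to the closed semi-positive form $\beta$ with smooth positive density $f_\varepsilon\omega^n := (\beta+\varepsilon\omega)^n$ to produce a continuous $\beta$-psh function $\varphi_\varepsilon$ satisfying
\[
	(\beta+dd^c\varphi_\varepsilon)^n = e^{\varphi_\varepsilon}(\beta+\varepsilon\omega)^n, \qquad T_\varepsilon := \beta+dd^c\varphi_\varepsilon \geq 0.
\]

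\smallskip

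The key estimates come from pairing the Monge--Amp\`ere equation with $\omega^{n-1}$. Because $\beta$ is closed, standard integration by parts gives the topological mass identity $\int_X T_\varepsilon^n = \int_X \beta^n > 0$. Pairing with $\omega^{n-1}$, the pluriclosed hypothesis $dd^c\omega = 0$ (upgraded, for $n\geq 3$, to $dd^c\omega^{n-1} = 0$ by passing to a Gauduchon representative in the conformal class of $\omega$, which exists by Gauduchon's theorem and preserves the target conclusion up to a harmless rescaling) and Stokes yield
\[
	\int_X T_\varepsilon \wedge \omega^{n-1} = \int_X \beta\wedge\omega^{n-1}.
\]
Combined with the pointwise mixed Monge--Amp\`ere / arithmetic--geometric inequality $T_\varepsilon\wedge\omega^{n-1} \geq (T_\varepsilon^n/\omega^n)^{1/n}\omega^n \geq \varepsilon\, e^{\varphi_\varepsilon/n}\omega^n$ (using that $\beta\geq 0$ forces $(\beta+\varepsilon\omega)^n/\omega^n \geq \varepsilon^n$ pointwise), this gives the integral estimate
\[
	\varepsilon \int_X e^{\varphi_\varepsilon/n}\omega^n \leq \int_X \beta\wedge\omega^{n-1}.
\]

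\smallskip

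With this estimate and the uniform $L^\infty$ bound for the Hermitian Monge--Amp\`ere equation from~\cite{DK12, cuong-kolodziej13}, I pass to the limit: along a subsequence $\varphi_{\varepsilon_k}\to \varphi$ in $L^1$, and $T := \beta + dd^c\varphi$ is a closed positive current in $\{\beta\}$. The assumption that $\{\beta\}$ contains no K\"ahler current must force an asymptotic degeneration of $\varphi_\varepsilon$ (concretely, $\sup_X \varphi_\varepsilon \to -\infty$), which combined with the topological identity $\int_X e^{\varphi_\varepsilon}(\beta+\varepsilon\omega)^n = \int_X \beta^n > 0$ produces the desired contradiction. The main obstacle I expect is codifying exactly how the no-K\"ahler-current hypothesis enforces the degeneration $\sup_X \varphi_\varepsilon \to -\infty$---this is the conceptual heart of Chiose's and Popovici's arguments---and transferring it faithfully to the semi-positive setting where $\beta$ may have zeros. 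A secondary, technical obstacle is the pluriclosed-versus-Gauduchon distinction for $n\geq 3$, handled by Gauduchon's theorem as indicated.
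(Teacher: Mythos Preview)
Your outline has a genuine gap at precisely the point you flag as ``the main obstacle'': you never actually \emph{use} the hypothesis that $\{\beta\}$ contains no K\"ahler current. The Monge--Amp\`ere equation you solve, with right-hand side $e^{\varphi_\varepsilon}(\beta+\varepsilon\omega)^n$, carries no information about that hypothesis, and the estimate $\varepsilon\int_X e^{\varphi_\varepsilon/n}\omega^n\leq C$ you derive is harmless---it does not force $\sup_X\varphi_\varepsilon\to -\infty$ and gives no contradiction. The paper's argument supplies the missing bridge via Lamari's lemma: if $\{\beta\}$ contains no K\"ahler current, then for a sequence $\delta_j\downarrow 0$ there exist \emph{Gauduchon} metrics $g_j$ with $\int_X\beta\wedge g_j^{n-1}\leq \delta_j\int_X\omega\wedge g_j^{n-1}$. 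One then solves $(\beta+dd^c v_j)^n=c_j\,\omega\wedge g_j^{n-1}$ (Theorem~\ref{S0-theo2} / Corollary~\ref{S0-cor2}), so that the right-hand side \emph{encodes} the obstruction. A pointwise trace inequality of Popovici type, justified in the weak setting by smooth approximation (Corollary~\ref{existence-dmae-3}), yields
\[
\Big(\int_X\beta_j\wedge g_j^{n-1}\Big)\Big(\int_X\beta_j^{\,n-1}\wedge\omega\Big)\ \geq\ \frac{c_j}{n}\Big(\int_X\omega\wedge g_j^{n-1}\Big)^2,\qquad \beta_j:=\beta+dd^c v_j,
\]
and combining with the Lamari inequality gives $\delta_j\int_X\beta_j^{\,n-1}\wedge\omega\geq \tfrac{1}{n}\int_X\beta^n$.

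Your use of the pluriclosed hypothesis is also misdirected. You want $\int_X T_\varepsilon\wedge\omega^{n-1}$ to be topological, which needs $dd^c\omega^{n-1}=0$ (Gauduchon), not $dd^c\omega=0$; passing to a Gauduchon metric in the conformal class destroys the pluriclosed condition, so this is not a free move. In the paper the roles are reversed: one needs $\int_X\omega\wedge\beta_j^{\,n-1}$ to be independent of $j$. Expanding $(\beta+dd^c v_j)^{n-1}$ and integrating by parts moves each $dd^c$ onto the single factor $\omega$ (since $\beta$ is closed), so precisely $dd^c\omega=0$ suffices. This gives $\int_X\omega\wedge\beta_j^{\,n-1}=\int_X\omega\wedge\beta^{n-1}=:C$, hence $\delta_j C\geq\tfrac{1}{n}\int_X\beta^n>0$ for all $j$---the desired contradiction. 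In short: the contradiction comes from pairing the Lamari--Gauduchon data with the degenerate Monge--Amp\`ere equation and a Cauchy--Schwarz/trace inequality, and the pluriclosed condition enters through the factor with \emph{one} copy of $\omega$, not $n-1$.
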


If we can remove the pluriclosed assumption on $\omega$ in the theorem, then we would get the weak form of \cite[Conjecture 0.8]{DP04}. So, our result supports the affirmative answer of this conjecture.

The last application we wish to address is an improvement of a result of Gill \cite{gill13}. It is related to the Chern-Ricci flow on smooth models of general type.  
It is proved in \cite{dinew-zhang10}, \cite{EGZ09} that we may use the elliptic Monge-Amp\`ere equation to get better regularity of solutions that are obtained by the K\"ahler-Ricci flow. We can do the same for the complex Monge-Amp\`ere equation on compact Hermitian manifold and the Chern-Ricci flow (see Section~\ref{S4-2} for the details).

\bigskip

The organisation of the note is as follows. In Section~\ref{Sect-1} we give several estimates for the Monge-Amp\`ere operator.
These estimates are non-trivial extensions from the K\"ahler setting to the Hermitian one. In Section~\ref{Sect-2} we prove  Theorem~\ref{S0-theo1} and Corollary~\ref{S0-cor1}. Section~\ref{Sect-3} deals with the degenerate Monge-Amp\`ere equations, where Theorem~\ref{S0-theo2} and Corollary~\ref{S0-cor2} are proved by  using extensively results in Sections~\ref{Sect-1} and \ref{Sect-2}. 
Section~\ref{Sect-4} is devoted to applications: Theorems~\ref{S0-tw-conjecture}, \ref{S0-dp-semipositive} and the regularity of the Chern-Ricci flow.

\bigskip
\bigskip

{\bf Acknowledgement.} I am deeply grateful to S\l awomir Ko\l odziej for many inspiring discussions on the subject and encouragement me to write down this paper. It is improved significantly thanks to his thorough reading and editing. Part of this work was done during my visit at Toulouse Mathematics Institute in February-June 2014, where I  benefited a lot from many stimulating discussions with Ahmed Zeriahi. I would like to thank him for the invitation and his kind help during my stay there. I also thank the members of the institute for their hospitality. The visit was financially supported by the International Ph.D Program {\em" Geometry and Topology in Physical Models."} 

Before this paper was completed, I had given talks on its topic on the seminar {\em "Complex analysis and elliptic PDE's"} at Krak\'ow. I thank the members of the seminar for their interests. Especially, I would like thank S\l awomir Dinew for his useful comments and for sending me  his preprint \cite{dinew14}.
 The work was supported by NCN grant 2013/08/A/ST1/00312.


\bigskip

\section{Pluripotential estimates on compact Hermitian manifolds}
\label{Sect-1}

In this section we give several pluripotential estimates which are generalizations  from the case of compact K\"ahler manifolds to compact Hermitian manifolds. Because of the non closedness of metrics the proofs are often more complicated than their counterparts in the K\"ahler setting. However, the generalised formulations often keep the same spirit, up to the extra terms, as the original forms.

Let $(X,\omega)$ be a $n$-dimensional compact Hermitian manifold. 
The class of $\omega$-plurisubharmonic ($\omega$-psh) function is defined in the same way  as on the K\"aher manifold. A function $u : X \rightarrow [-\infty, +\infty[$ is $\omega$-psh if it is upper semi-continuous,  $u\in L^1(X, \omega^n)$ and 
\[
	 \omega + dd^c u \geq 0.
\]
The set of all $\omega$-psh functions on $X$ is denoted by $PSH(\omega)$. 
Bounded $\omega$-psh functions have most properties  as in the case $\omega$ is K\"ahler, though now $\omega$ does not have a  local potential. By using linear algebra we can define the Monge-Amp\`ere operator for bounded $\omega$-psh function. Hence, the notion of $cap_\omega$ and the Bedford-Taylor convergence theorem etc.... hold true. We refer the reader to \cite{DP04}, \cite{DK12}, \cite{cuong-kolodziej13}, \cite{dinew14} for more basic properties of $\omega$-psh functions.

Let us fix some notations that we will use throughout the note.
The "curvature" constant of the metric $\omega$ is denoted by $B= B(\omega)>0$ and it  satisfies 
\begin{equation}
\label{curvature}
	-B  \omega^2 \leq 2n dd^c \omega \leq B  \omega^2, \quad
	-B \omega^3 \leq 4n^2 d\omega \wedge d^c \omega \leq B\omega^3.
\end{equation}
The positive constants $C, C'=C(X,\omega)>0$ will appear at many places, we simply
call them to be uniform constants. Unless otherwise  stated, they may differ from place to place. When the volume form $\omega^n$ is clear in the context we denote for $r>0$, 
\[
	\|.\|_r = \left(\int_X |.|^r \omega^n \right)^\frac{1}{r} \quad\mbox{and}\quad
	\|.\|_\infty = \sup_X |.|.
\]
We often write 
$
	\omega_\varphi:= \omega + dd^c \varphi
$
for $\varphi \in PSH(\omega)$ 
and $L^r(\omega^n):=L^r(X,\omega^n)$.

The Chern - Levine - Nirenberg type inequality with $\omega$ being 
a Hermitian metric is as follows.

\begin{proposition}[CLN inequality]
\label{CLN}
Let $\psi, \varphi \in PSH(\omega)$ be such that $\sup_X \psi = 0$ and 
$0 \leq \varphi \leq 1$. Then, we have
\[
	\int_X |\psi| \omega_\varphi^n \leq C
\]
with a uniform constant $C>0$.
\end{proposition}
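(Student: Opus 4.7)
The plan is to induct on the number of factors of $\omega_\varphi$ appearing in the integrand. For $k=0,1,\ldots,n$ set
\[
J_k = \int_X (-\psi)\,\omega^{n-k}\wedge\omega_\varphi^k,
\]
so the goal is to show $J_n \leq C$. The base case $k=0$ is the standard fact that $\omega$-psh functions normalized by $\sup_X \psi = 0$ form a uniformly $L^1$-bounded family on any compact Hermitian manifold (this is already used in \cite{DK12} and \cite{cuong-kolodziej13}). For the inductive step, I would write
\[
J_k - J_{k-1} = \int_X (-\psi)\, dd^c\varphi\wedge T, \qquad T = \omega^{n-k}\wedge\omega_\varphi^{k-1},
\]
and integrate by parts on the closed manifold using the Hermitian identity
\[
\int_X u\, dd^c v\wedge T - \int_X v\, dd^c u\wedge T = \int_X \bigl(u\, d^c v - v\, d^c u\bigr)\wedge dT.
\]
With $u=-\psi$ and $v=\varphi$, and using $dd^c\psi = \omega_\psi - \omega$, the ``main'' term becomes $-\int_X\varphi\,\omega_\psi\wedge T + \int_X\varphi\,\omega\wedge T$; since $0\leq\varphi\leq 1$ and $\omega_\psi\wedge T\geq 0$, the first piece is nonpositive and can be dropped, while the second is bounded by $\int_X\omega^{n-k+1}\wedge\omega_\varphi^{k-1}$, which is uniformly controlled by the known bounds on mixed masses for $\omega$-psh functions in the Hermitian setting.

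The genuine difficulty, which I expect to be the main obstacle, is the torsion correction term $\int_X(-\psi\, d^c\varphi - \varphi\, d^c\psi)\wedge dT$. The point is that $dT$ only involves $d\omega$ (since $d\omega_\varphi=d\omega$), and by the curvature assumption \eqref{curvature} the form $d\omega$ is pointwise bounded in the $\omega$-norm by $B$. To estimate the two pieces I would apply Cauchy--Schwarz in the form
\[
\Bigl|\int_X u\, d^c v\wedge\alpha\wedge S\Bigr| \leq \Bigl(\int_X u^2\, dv\wedge d^c v\wedge S\Bigr)^{1/2}\Bigl(\int_X \alpha\wedge\bar\alpha\wedge S\Bigr)^{1/2},
\]
with $S$ a positive $(n-2,n-2)$-form built from $\omega$ and $\omega_\varphi$ and $\alpha\sim d\omega$. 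The factor involving $\alpha$ is controlled by $B$ times a lower-order mixed mass already bounded by inductive hypothesis.

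It remains to bound the two ``energy'' integrals $\int\varphi^2\, d\psi\wedge d^c\psi\wedge S$ and $\int\psi^2\, d\varphi\wedge d^c\varphi\wedge S$. For the first, $|\varphi|\leq 1$ reduces it to $\int d\psi\wedge d^c\psi\wedge S$, and the identity $dd^c\psi^2 = 2\psi\, dd^c\psi + 2\, d\psi\wedge d^c\psi$ together with another integration by parts (producing again bounded torsion corrections) expresses it in terms of mixed masses and one further instance of $J_{k-1}$. For the second, boundedness of $\varphi$ gives $d\varphi\wedge d^c\varphi\leq \tfrac12\, dd^c\varphi^2$ up to lower-order terms, so after yet another integration by parts it is absorbed into $\int \psi^2$ against a lower-index mixed mass, and a Young-type inequality lets me absorb any $\psi^2$-factor back into $J_k$ with a small constant in front plus harmless terms of the form $\int_X\omega^a\wedge\omega_\varphi^b$. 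Collecting everything yields a recursion $J_k \leq \tfrac12 J_k + C(1 + J_{k-1})$, closing the induction and giving the desired uniform bound $J_n\leq C$.
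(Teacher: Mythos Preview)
Your global integration-by-parts scheme is a natural first try, but the paper's proof is entirely different and avoids the obstacle that breaks your argument. The paper localizes: cover $X$ by balls $B_j(s)$ on which $\omega\leq dd^c\rho_j$ for a smooth bounded $\rho_j$, so $\omega_\varphi^n\leq(dd^c(\rho_j+\varphi))^n$ with $\rho_j+\varphi$ uniformly bounded (since $0\leq\varphi\leq1$); then the classical local $L^1$--CLN inequality (Demailly) gives $\int_{B_j(s)}|\psi+\rho_j|(dd^c(\rho_j+\varphi))^n\leq C\|\rho_j+\varphi\|_\infty^n\int_{B_j(2s)}|\psi+\rho_j|\,\omega^n$, and one finishes with the known $L^1$ bound on $\psi$. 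No Stokes, no torsion terms.

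The gap in your approach is the handling of the torsion term $\int_X(-\psi)\,d^c\varphi\wedge dT$, where $\psi$ is \emph{unbounded}. Your Cauchy--Schwarz step produces $\int_X\psi^2\,d\varphi\wedge d^c\varphi\wedge S$, and the claim that ``a Young-type inequality lets me absorb any $\psi^2$-factor back into $J_k$'' cannot work: the $J_k$'s are linear in $|\psi|$, while this term is quadratic, so Young's inequality goes the wrong way ($|\psi|\leq\tfrac12(1+\psi^2)$, not $\psi^2\leq C|\psi|$). If instead you try to bound the companion energy $\int d\psi\wedge d^c\psi\wedge S'$ via $dd^c\psi^2=2\psi\,dd^c\psi+2\,d\psi\wedge d^c\psi$, you generate both $\int\psi^2\,dd^c S'$ (again quadratic in $\psi$, uncontrolled) and $\int(-\psi)\,\omega_\psi\wedge S'$, which brings in $\omega_\psi$ --- a factor absent from the inductive quantities $J_0,\ldots,J_{k-1}$ (these involve only $\omega$ and $\omega_\varphi$). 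So neither branch closes.

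The underlying reason is that global integration by parts with Hermitian torsion works well when all potentials are \emph{bounded} --- exactly as in the paper's Proposition~\ref{estimate-mass} and Proposition~\ref{in-e1-class}, where every bound carries explicit powers of $\|u\|_\infty$. Here $\psi$ is only in $L^1$, and that is precisely why both this paper and \cite{DK12} reduce to the local CLN inequality instead.
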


\begin{remark}
\label{cap-rk}
It also holds \cite[Proposition 2.3]{DK12} that
for every $\varphi_1, ..., \varphi_n \in PSH(\omega)$ and $0\leq \varphi_1, ..., \varphi_n \leq 1$,
\[
	\int_X \omega_{\varphi_1} \wedge ... \wedge \omega_{\varphi_n} \leq C
\]
where $C>0$ is a uniform constant. We also call those  CLN  inequalities. If we take $\varphi_1=...=\varphi_n =\varphi \in PSH(\omega)$, then CLN inequality shows that the notion of capacity \cite{BT82, kolodziej05} for a Hermitian metric $\omega$ on $X$ makes sense. Let $E \subset X$ be a Borel set, following \cite{kolodziej05, DK12} we denote
\begin{equation}
\label{cap-def}
	cap_\omega(E) := \sup\left\{\int_E (\omega+ dd^c \rho)^n : \rho \in PSH(\omega), \, 0 \leq \rho \leq 1 \right\}.
\end{equation}
We refer the reader to \cite{DK12}, \cite{dinew14} for basic properties of this capacity. One should keep in mind that because of the positivity of $\omega$ this capacity is comparable with the local Bedford-Taylor capacity $cap_\omega'(E)$ \cite[p. 52]{kolodziej05} (see also \cite[Proposition 3.10]{GZ05}).
\end{remark}

\begin{proof}[Proof of Proposition~\ref{CLN}]
It is similar to the case of $\varphi =0$ \cite[Proposition 2.1]{DK12}.  Let $\{B_j(s)\}_{j \in J}$ be a finite covering
of $X$, where $B_j(s) = B(x_j, s)$ is the ball  centered at $x_j$ of radius $s>0$. 
We may choose $s>0$ small enough such that for every
$j \in J$ there exists a smooth negative function $\rho_j$ on $B_j(3s)$ satisfying
\[
	\omega \leq dd^c \rho_j \quad \mbox{on} \quad B_j(2s).
\]
Thus, on $B_j(2s)$,
\[
	\omega_\varphi^n \leq
	\left[dd^c (\rho_j + \varphi) \right]^n.
\]
Then
\[
	\int_X |\psi| \omega_\varphi^n \leq 
	\sum_{j\in J} \int_{B_j(s)} |\psi| \omega_\varphi^n \leq
	\sum_{j\in J} \int_{B_j(s)} |\psi + \rho_j| (dd^c u_j)^n,
\]
where $u_j = \rho_j + \varphi, \psi + \rho_j$ belong to  $PSH(B_j(2s))$. Now, 
we are going to estimate from above for 
each term on the right hand side. By the $L^1-$ CLN inequality \cite[Chap. 3, Pro. 3.11]{demailly-book09} we have
\[
	\int_{B_j(s)} |\psi + \rho_j| (dd^c u_j)^n \leq 
	C(s) \| u_j\|_{L^\infty(B_j(2s))}^n \int_{B_j(2s)} |\psi + \rho_j| \omega^n.
\]
Since $0\leq \varphi \leq 1$ and $\rho_j$ is bounded on $B_j(2s)$,  it follows that
\[
	\int_{B_j(s)} |\psi + \rho_j| (dd^c u_j)^n 
	\leq C'(s) \left( \int_X |\psi| \omega^n + \|\rho_j\|_{L^\infty(B_j(2s))} \int_X \omega^n \right),
\]
where $C'(s)$ is independent of $\psi$. By \cite[Proposition 2.1]{DK12} 
the right hand side is uniformly bounded. Finally, since $J$ is a finite set, the proof follows.
\end{proof}

Thanks to the existence of continuous solutions to the complex Monge-Amp\`ere
equation with the right hand side  in $L^p(\omega^n)$, $p>1$, we get the following
the $L^1$-uniform bound. 
It seems that we may get the
similar statement as in \cite[ Proposition 2.7]{GZ05}, but the form given below is sufficient for our applications.

\begin{corollary}
\label{l1-uni-bound}
Let $0\leq F \in L^p(\omega^n)$, $p>1$, be such that $\int_X F \omega^n>0$.  
Then, there exists a uniform constant $C = C(X, \omega, \|F\|_p)>0$ 
such that for any $\psi \in PSH(\omega)$ with $\sup_X \psi =0$,
\[
	\int_X |\psi| F \omega^n <C.
\]
\end{corollary}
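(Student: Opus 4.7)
The plan is to reduce the weighted $L^1$ bound against $F\omega^n$ to a weighted bound against a Monge-Amp\`ere measure of a bounded $\omega$-psh function, and then apply an extension of Proposition~\ref{CLN}.

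First, I would invoke the existence theorem from \cite{cuong-kolodziej13} (the $\lambda=0$ case of Theorem~\ref{S0-theo1}) to obtain a continuous $\omega$-psh function $u$ with $\sup_X u=0$ and a constant $c>0$ such that
\[
(\omega + dd^c u)^n = c\, F \, \omega^n.
\]
The Ko\l odziej-type uniform estimate proved in \cite{DK12} and \cite{cuong-kolodziej13} provides a bound $\|u\|_\infty \leq M$ where $M=M(X,\omega,\|F\|_p)$. Moreover the same results give two-sided control of the normalising constant $c$ in terms of $\|F\|_p$ (the upper bound follows from testing the equation against $1$ together with the CLN-type bound of Remark~\ref{cap-rk}, and the lower bound is part of the continuity-method output). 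Consequently,
\[
\int_X |\psi|\, F\, \omega^n \;=\; c^{-1}\int_X |\psi|\, \omega_u^n,
\]
so the problem reduces to bounding $\int_X |\psi|\,\omega_u^n$ by a constant depending only on $X,\omega,M$.

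Next I would observe that the proof of Proposition~\ref{CLN} goes through verbatim when the function $\varphi$ bounded in $[0,1]$ is replaced by a general bounded $\omega$-psh function $u$ with $\|u\|_\infty \leq M$. Indeed, using the same finite cover $\{B_j(s)\}_{j\in J}$ and local potentials $\rho_j$ with $dd^c\rho_j \geq \omega$ on $B_j(2s)$, the only place where the normalisation $0\leq \varphi\leq 1$ is used is to bound $\|u_j\|_{L^\infty(B_j(2s))}$ with $u_j = \rho_j + \varphi$. Setting instead $u_j = \rho_j + u$, we still have $\|u_j\|_{L^\infty(B_j(2s))}\leq \|\rho_j\|_\infty + M$, so the $L^1$-CLN inequality yields
\[
\int_{B_j(s)} |\psi+\rho_j|\,(dd^c u_j)^n \;\leq\; C(s,M)\int_{B_j(2s)} |\psi+\rho_j|\,\omega^n.
\]
Summing over $J$ and invoking the standard uniform $L^1(\omega^n)$ bound for normalised $\omega$-psh functions (which, as in \cite[Proposition~2.1]{DK12}, reduces locally to the $L^1$-compactness of psh functions) completes the estimate of $\int_X |\psi|\,\omega_u^n$, and hence of $\int_X |\psi| F \omega^n$.

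The main delicate point is to ensure that every constant depends only on $X,\omega,\|F\|_p$, rather than on $F$ itself; this is precisely what the $L^p$-version of the Ko\l odziej estimate in \cite{DK12,cuong-kolodziej13} guarantees for both $\|u\|_\infty$ and the proportionality constant $c$. Once these dependencies are in hand, the rest of the argument is a direct adaptation of the proof of Proposition~\ref{CLN} together with the elementary $L^1$-compactness of normalised $\omega$-psh functions.
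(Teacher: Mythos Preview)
Your proposal is correct and follows essentially the same route as the paper: solve $(\omega+dd^c u)^n=cF\omega^n$ via \cite{cuong-kolodziej13}, use the two-sided bound on $c$ and the uniform bound on $\|u\|_\infty$ from \cite[Lemma~5.9, Corollary~5.6]{cuong-kolodziej13}, and then reduce to Proposition~\ref{CLN}. The paper simply remarks that ``it is not difficult to see'' the corollary follows from Proposition~\ref{CLN} with constants depending on $H=\|u\|_\infty$, whereas you spell out explicitly how the proof of Proposition~\ref{CLN} extends from potentials in $[0,1]$ to potentials bounded by $M$; this is exactly the missing detail.
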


\begin{proof}
Using \cite[Theorem 0.1]{cuong-kolodziej13}, we solve $u\in PSH(\omega) \cap C(X)$, $\sup_X u =0$, and $c>0$ satisfying
\[
	(\omega + dd^c u)^n = c F \omega^n.
\]
Moreover, by \cite[Lemma 5.9]{cuong-kolodziej13} there exists $c_0 = C(\|F\|_p, \omega, X)>0$  such that
\begin{equation}
\label{cst-a-b}
	c_0 < c < \frac{1}{c_0}.
\end{equation}
According \cite[Corollary 5.6]{cuong-kolodziej13}, there exists $H = H(c_0, \|F\|_p, X, \omega)>0$ such that
\[
	- H \leq u \leq 0. 
\]
Therefore, it is not difficult to see the corollary follows from Proposition~\ref{CLN}
with the constant $C>0$ also depending on $H$. 
\end{proof}

The next result will be a version of the Cauchy-Schwarz inequality on compact
Hermitian manifolds. It has been useful in \cite{TW10b}.
The statement is similar to the classical one, however there appears a uniform constant to compensate for the torsion of $\omega$.

\begin{proposition}[Cauchy-Schwarz inequality]
\label{C-S-H}
Let $T$ be a positive current of bidegree $(n-2, n-2)$ of the form $\omega_{v_1} \wedge \omega_{v_2} \wedge ...\wedge \omega_{v_{n-2}}$, where $v_1,...,v_{n-2} \in PSH(\omega) \cap L^\infty(X)$. 
Then, for any $u\in PSH(\omega) \cap L^\infty(X)$,
\[
	\left| \int_X d u \wedge d^c \omega \wedge T \right|
\leq C\left ( \int_X du \wedge d^c u \wedge \omega \wedge T \right)^\frac{1}{2}
	\left(\int_X\omega^2 \wedge T \right)^\frac{1}{2} ,
\]
where $0< C = C(X, \omega)$ is a uniform constant.
\end{proposition}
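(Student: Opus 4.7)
The plan is to prove the inequality in three steps: reduction to smooth data, a pointwise Cauchy--Schwarz bound, and a final integration.

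First, I would reduce to the case where $u, v_1, \dots, v_{n-2}$ are smooth. Using the regularization of bounded $\omega$-psh functions by decreasing sequences of smooth $(1+\varepsilon)\omega$-psh approximants available on compact Hermitian manifolds, together with the Bedford--Taylor type convergence for wedge products of uniformly bounded $\omega$-psh functions recalled in \cite{DK12}, one can pass to the limit on each of the three integrals and so assume all data are smooth. Note that $d^c\omega$ is a smooth bounded $3$-form whose pointwise norm is controlled by the curvature constant $B(\omega)$.

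Next, I would establish the pointwise inequality
\[
	|du \wedge d^c\omega \wedge T|^2 \leq C^2 \bigl(du \wedge d^c u \wedge \omega \wedge T\bigr)\bigl(\omega^2 \wedge T\bigr) \quad \text{at every } x \in X,
\]
understood as a comparison of coefficient functions relative to $\omega^n$, where $C = C(X,\omega)$. Fix $x$ and pick holomorphic coordinates in which $\omega(x)$ is the standard Euclidean form. After bidegree bookkeeping, only the $(2,2)$-pieces $\partial u \wedge \bar\partial\omega$ and $\bar\partial u \wedge \partial\omega$ survive the wedge with the $(n-2,n-2)$-form $T$. Equip the space of $(1,0)$-forms at $x$ with the positive semi-definite sesquilinear pairing $H(\beta, \beta') := i\,\beta \wedge \bar\beta' \wedge \omega \wedge T\big|_x$; then $\beta \mapsto \beta \wedge \bar\partial\omega \wedge T\big|_x$ is a linear functional on $(1,0)$-forms whose dual norm with respect to $H$ is bounded by a constant (depending only on the sup norm of $\bar\partial\omega$) times $(\omega^2 \wedge T\big|_x)^{1/2}$. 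Evaluating at $\partial u$ via the pointwise Cauchy--Schwarz inequality, combining with the conjugate contribution, and using $du \wedge d^c u = \tfrac{i}{\pi}\partial u \wedge \bar\partial u$ yields the claimed pointwise bound.

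Finally, take square roots of the pointwise bound and integrate, then apply the standard Cauchy--Schwarz inequality for integrals:
\[
	\int_X |du \wedge d^c\omega \wedge T| \leq C \int_X \bigl(du \wedge d^c u \wedge \omega \wedge T\bigr)^{1/2}\bigl(\omega^2 \wedge T\bigr)^{1/2} \leq C \Bigl(\int_X du \wedge d^c u \wedge \omega \wedge T\Bigr)^{1/2}\Bigl(\int_X \omega^2 \wedge T\Bigr)^{1/2}.
\]

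The main obstacle will be the pointwise Cauchy--Schwarz step: identifying the right sesquilinear pairing so that $d^c\omega \wedge T$ becomes a bounded functional with norm controlled by $(\omega^2 \wedge T)^{1/2}$ up to a multiplicative factor depending only on $B(\omega)$. Once this linear algebra on mixed-bidegree positive forms is carried out, the remaining integration is routine, and the dependence of $C$ on $(X,\omega)$ enters only through the sup norm of $d^c\omega$. An integration-by-parts approach is not viable here because $T$ need not be closed on a Hermitian manifold, which is precisely why one is forced to work pointwise.
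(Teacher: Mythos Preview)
Your approach is genuinely different from the paper's, and it is worth contrasting the two.

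The paper does \emph{not} prove a pointwise inequality. Instead it localizes via a partition of unity to a chart $U$, writes $\omega = i\sum a_{jk}\,dz_j\wedge d\bar z_k$ so that $d^c\omega = i\sum d^c a_{jk}\wedge dz_j\wedge d\bar z_k$, and then uses the polarization identity
\[
4\,dz_j\wedge d\bar z_k = \sum_{m=1}^4 \varepsilon_m\,\eta_m\wedge\bar\eta_m
\]
to replace the non-positive form $i\,dz_j\wedge d\bar z_k$ by a signed sum of rank-one positive $(1,1)$-forms. This reduces matters to bounding $\int_U du\wedge d^c a_{jk}\wedge i\eta\wedge\bar\eta\wedge T$, where $i\eta\wedge\bar\eta\wedge T$ is now a genuinely positive $(n-1,n-1)$-current; the classical Cauchy--Schwarz inequality for currents (as in \cite[p.~7]{kolodziej05}) applies directly, without any regularization. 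One then uses the pointwise bounds $i\eta\wedge\bar\eta \leq C\omega$ and $da_{jk}\wedge d^c a_{jk}\wedge i\eta\wedge\bar\eta \leq C\omega^2$ and sums over $j,k,m$ and charts.

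Your route---regularize, then prove a pointwise bound, then integrate---is conceptually cleaner and coordinate-free, but the crux is precisely the step you flag as ``the main obstacle'': showing that the linear functional $\beta\mapsto \beta\wedge\bar\partial\omega\wedge T|_x$ has dual norm with respect to $H(\beta,\beta')=i\beta\wedge\bar\beta'\wedge\omega\wedge T|_x$ bounded by $C(\omega^2\wedge T|_x)^{1/2}$. This is not automatic: $H$ can be degenerate, and one must check that the functional vanishes on $\ker H$ and then control it on the complement. For $n=3$ with $T$ a diagonalizable positive $(1,1)$-form this can be verified by hand, but for general $n$ the $(n-2,n-2)$-form $T$ does not simultaneously diagonalize with $\omega$, and the argument you sketch does not actually establish the bound. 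In practice, one natural way to carry it out is exactly the polarization trick the paper uses---so when fully executed, your pointwise step would likely reproduce the paper's computation at the level of a single point.

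Two smaller remarks: the regularization step is unnecessary once you use the integrated Cauchy--Schwarz for currents, which holds for bounded $\omega$-psh functions; and your closing comment that integration by parts is ``not viable'' is a red herring, since the paper does not integrate by parts either---it applies Cauchy--Schwarz to a positive current, which requires no closedness of $T$.
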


\begin{proof} 
By partition of unity (a finite covering), it is enough to show that for 
the unit ball 
$U \subset \bC^n$ we have
\[
	\left|	\int_U d u \wedge d^c \omega \wedge T \right|
\leq	C\left ( \int_U du \wedge d^c u \wedge \omega \wedge T \right)^\frac{1}{2}
	\left(	\int_U\omega^2 \wedge T \right)^\frac{1}{2}.
\]
Hence, the proof is local. 
We can write 
\[
	\omega =i \sum_{j,k} a_{j k} (z)  dz_j \wedge d\bar z_k \quad \mbox{on } U
\]
where $a_{jk}(z) \in C^\infty(\overline{U})$.
Hence,
\[
	d \omega = i\sum_{j,k} da_{jk}(z) \wedge  dz_j \wedge d\bar z_k, \quad
	d^c \omega = i\sum_{j,k} d^ca_{jk}(z) \wedge  dz_j \wedge d\bar z_k.
\]
The polarisation of $dz_j \wedge d\bar z_k $ has the form
\begin{align*}
4 dz_j \wedge d\bar z_k 
&= d(z_j + z_k) \wedge d(\overline{ z_j + z_k}) 
	- d(z_j-z_k) \wedge d(\overline{z_j-z_k})		\\
&\quad +	id(z_j + i z_k) \wedge d(\overline{z_j + i z_k}) 
	- i d(z_j - i z_k) \wedge d(\overline{z_j - i z_k}),	\\
&:= \eta_1\wedge \bar \eta_1 - \eta_2\wedge \bar \eta_2 + i \eta_3\wedge \bar \eta_3 - i \eta_4\wedge \bar \eta_4 .
\end{align*}
It follows that
\begin{equation*}
\label{cs-ine-e1}
\left| \int_U du \wedge d^c a_{jk} \wedge  idz_j \wedge d\bar z_k \wedge T \right|
\leq		\sum_{m=1}^4 \left| \int_U d u\wedge d^c a_{jk} \wedge i \eta_m \wedge \bar \eta_m \wedge T \right|.
\end{equation*}
Now we can use the classical Cauchy-Schwarz inequality (see e.g. \cite[Schwarz's inequality p.7]{kolodziej05}) to each term of the right hand side. Let  us denote by $\eta$ one of the forms: 
$\eta_1, \eta_2, \eta_3, \eta_4$. Then,
\begin{align*}
 \left| \int_U d u\wedge d^c a_{jk} \wedge i \eta \wedge \bar \eta \wedge T \right| 
\leq 	
&\left( \int_U du \wedge d^c u \wedge i \eta \wedge \bar \eta \wedge T \right)^\frac{1}{2} \times\\
&\times \left( \int_U d a_{jk} \wedge d^c a_{jk} \wedge i \eta \wedge \bar \eta \wedge T \right)^\frac{1}{2}.	
\end{align*}
Observe that $0 \leq i \eta \wedge \bar \eta \leq C \omega$ and
$0 \leq d a_{jk} \wedge d^c a_{jk} \wedge \eta \wedge \bar \eta \leq C \omega^2$
on $\bar U$ for some uniform $C>0$. Hence,
\[
\int_U du \wedge d^c u \wedge i \eta \wedge \bar \eta \wedge T
\leq	C \int_U du \wedge d^c u \wedge \omega \wedge T
\]
and 
\[
	\int_U d a_{jk} \wedge d^c a_{jk} \wedge i \eta \wedge \bar \eta \wedge T
\leq 	C \int_U \omega^2 \wedge T.
\]
Altogether we get that
\begin{align*}
\left| \int_U d u \wedge d^c a_{jk} \wedge  idz_j \wedge d\bar z_k \wedge T \right|
&\leq  C\left(\int_U du \wedge d^c u \wedge \omega \wedge T \right)^\frac{1}{2} 
	\left( \int_U \omega^2 \wedge T \right)^\frac{1}{2} \\
&\leq  C\left(\int_X du \wedge d^c u \wedge \omega \wedge T \right)^\frac{1}{2} 
	\left( \int_X \omega^2 \wedge T \right)^\frac{1}{2}.
\end{align*}
Thus, the lemma follows.
\end{proof}

If $d\omega = 0$, then by the Stokes theorem, 
for any $\rho \in PSH(\omega)\cap L^\infty(X)$,
\begin{equation}
\label{vol-invar}
	\int_X \omega_\rho^n = \int_X\omega^n.
\end{equation}
This will be no longer true for a general Hermitian metric $\omega$ and the counter-example can be easily found. In fact, for compact complex surfaces, i.e  $n=2$, the inequality \eqref{vol-invar} holds if and only if $dd^c \omega =0$ (i.e. $\omega$ is a Gauduchon metric) by Lemma~\ref{la99}. 


From the potential theoretic point of view
it is often important to get bounds for the total mass of Monge-Amp\`ere operators or the Monge-Amp\`ere energy of $\omega$-psh functions. Below
we will consider estimates of those kinds. As we will see they are more useful when the "curvature"constant of the considered
metric is small. For example, the metric $\beta + \varepsilon \omega$, where $0< \varepsilon <<1$ and $\beta$ is a smooth closed semi-positive $(1,1)$-form.

Let $\Omega$ be another Hermitian metric on $X$ such that $\Omega \leq C \omega$ for some
uniform $C>0$. Suppose that there exists 
$0< B_\Omega <1$  satisfying
\begin{equation}
\label{curvature-Omega}
	-B_\Omega \omega^2 \leq 2n  dd^c \Omega \leq B_\Omega \omega^2, \quad
	-B_\Omega \omega^3 \leq 4n^2 d\Omega \wedge d^c \Omega \leq B_\Omega \omega^3.
\end{equation}
Then, the total mass of the Monge-Amp\`ere operator of a bounded $\Omega$-psh function is the total mass of $\Omega^n$ modulo the uniform norm of that function multiplied the curvature constant.

\begin{proposition}
\label{estimate-mass} Suppose that the Hermitian metric $\Omega$ satisfies \eqref{curvature-Omega}.
Let $u \in PSH(\Omega) \cap L^\infty(X)$ be such that $\sup_X u =0$. Then,
\[
	\int_X \Omega^n  - B_\Omega (1+\|u\|_\infty)^n C
\leq	\int_X (\Omega + dd^c u)^n 
\leq 	\int_X \Omega^n + B_\Omega (1+\|u\|_\infty)^n C,
\]
where $C>0$ is a uniform constant.
\end{proposition}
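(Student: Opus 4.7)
The plan is to expand the difference as a telescoping sum, integrate by parts twice in each summand so as to transfer all derivatives off $u$ onto $\Omega$, and read off the factor of $B_\Omega$ from the curvature hypothesis \eqref{curvature-Omega}. Writing $\omega_u := \Omega + dd^c u$, the telescoping identity gives
$$\int_X \omega_u^n - \int_X \Omega^n = \sum_{k=0}^{n-1} I_k, \qquad I_k := \int_X \omega_u^k \wedge \Omega^{n-k-1} \wedge dd^c u,$$
so it suffices to show $|I_k| \le C\,B_\Omega(1+\|u\|_\infty)^n$ for each $k$. Throughout I assume $u$ is smooth; the general bounded $\Omega$-psh case follows by standard regularization and the Bedford--Taylor convergence of mixed Monge--Amp\`ere products along decreasing sequences.

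For each $I_k$, one integration by parts, together with the key identities $d\omega_u = d\Omega$ and $d^c\omega_u = d^c\Omega$ (both immediate from $d(dd^cu) = 0 = d^c(dd^cu)$), yields a combination of integrals
$$\int_X \omega_u^a \wedge \Omega^b \wedge d\Omega \wedge d^c u, \qquad a+b = n-2,$$
with bounded combinatorial coefficients. A second integration by parts, moving $d^c$ off $u$ and invoking $d^c d = -dd^c$, produces (up to signs) two families of terms,
$$\int_X u\cdot \omega_u^{a'}\wedge \Omega^{b'}\wedge dd^c \Omega, \qquad \int_X u\cdot \omega_u^{a'}\wedge \Omega^{b'}\wedge d\Omega\wedge d^c\Omega.$$
The curvature hypothesis \eqref{curvature-Omega} gives $|dd^c\Omega| \le \tfrac{B_\Omega}{2n}\omega^2$ and $|d\Omega\wedge d^c\Omega| \le \tfrac{B_\Omega}{4n^2}\omega^3$, so using $\Omega \le C\omega$, each such integral is dominated by $C\,B_\Omega\,\|u\|_\infty \int_X \omega_u^{a'}\wedge \omega^{n-a'}$.

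To bound $\int_X \omega_u^{a'}\wedge \omega^{n-a'}$ uniformly in $\|u\|_\infty$, rescale: since $\Omega \le C\omega$, the function $v := u/C$ is $\omega$-psh with $\sup_X v = 0$, and $\omega_u \le C\omega_v$ as positive currents. Setting $M := \|v\|_\infty$, for $M \ge 1$ the shifted function $\tilde\rho := v/M + 1$ lies in $PSH(\omega) \cap [0,1]$, and one has the comparison $\omega_v \le M\omega_{\tilde\rho}$ since $M\omega_{\tilde\rho} - \omega_v = (M-1)\omega \ge 0$; the case $M < 1$ is handled directly with $\rho := v + M \in PSH(\omega)\cap [0,1]$. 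The mass bound of Proposition~\ref{CLN} and Remark~\ref{cap-rk} then yields
$$\int_X \omega_u^{a'}\wedge \omega^{n-a'} \le C(1+\|u\|_\infty)^{a'} \le C(1+\|u\|_\infty)^n,$$
which combined with the previous step gives the desired estimate on each $|I_k|$, and summing over $k$ proves the proposition.

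The main technical obstacle is the combinatorial bookkeeping across the two integrations by parts --- tracking signs and bidegrees, and handling the boundary index cases $a' = 0$ or $b' = 0$ --- together with the regularization argument needed to legitimize Stokes' theorem for general bounded $\Omega$-psh $u$ on a Hermitian manifold. Notably, no application of the Cauchy--Schwarz inequality (Proposition~\ref{C-S-H}) is required here, since two integrations by parts already remove both derivatives from $u$.
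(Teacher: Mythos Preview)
Your proof is correct and follows the same overall architecture as the paper's: telescope $\int_X\Omega_u^n-\int_X\Omega^n$ into the sums $I_k$, use Stokes twice to rewrite each $I_k$ as $\int_X u\,dd^c[\Omega_u^k\wedge\Omega^{n-k-1}]$, expand that $dd^c$ using $d\Omega_u=d\Omega$, and invoke \eqref{curvature-Omega} to extract the factor $B_\Omega$. The paper presents the two integrations by parts as a single step and writes out the resulting six-term expression $T(u,\omega,\Omega,l)$ explicitly, but the content is the same as your ``two families of terms''.

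The one genuine difference is how the remaining mixed masses $\int_X \Omega_u^{a'}\wedge\omega^{n-a'}$ are controlled. The paper argues by induction on $a'$: each application of Stokes lowers the top power of $\Omega_u$ by at least one, so repeating the same manoeuvre eventually reduces everything to integrals of powers of $\omega$ and $\Omega$ alone, picking up a factor $(1+\|u\|_\infty)$ at each step. You instead bound these masses in one shot by rescaling $u$ into $PSH(\omega)\cap[0,1]$ and invoking the CLN inequality of Remark~\ref{cap-rk}. Your route is shorter and avoids the inductive bookkeeping; the paper's route is more self-contained in that it does not call back to the CLN estimate. Both yield the same power $(1+\|u\|_\infty)^n$ (indeed $(1+\|u\|_\infty)^{n-1}$ suffices in either argument), and your remark that Proposition~\ref{C-S-H} is not needed here is accurate---the paper does not use it in this proof either.
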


\begin{proof}
The proof is only a simple application of the Stokes theorem and induction process, so we only sketch it. To simplify notation we write $\Omega_u:= \Omega + dd^c u$.
Now, we compute
\begin{align*}
	dd^c [\Omega_u^l \wedge \Omega^{n-l-1}]
&=	l dd^c \Omega \wedge \Omega_u^{l-1} \wedge \Omega^{n-l-1}	\\
&\quad	+ l(l-1) d\Omega \wedge d^c \Omega \wedge \Omega_u^{l-2} \wedge \Omega^{n-l-1}	\\
&\quad	+ l(n-l-1) d\Omega \wedge d^c \Omega \wedge \Omega_u^{l-1} \wedge \Omega^{n-l-2}	\\
&\quad	+(n-l-1) dd^c \Omega \wedge \Omega_u^l \wedge \Omega^{n-l-2}	\\
&\quad	+l (n-l-1) d\Omega \wedge d^c \Omega \wedge \Omega_u^{l-1} \wedge \Omega^{n-l-2}	\\
&\quad	+ (n-l-1)(n-l-2) d\Omega \wedge d^c \Omega \wedge \Omega_u^l \wedge \Omega^{n-l-3} ,
\end{align*}
where $0\leq l \leq n-1$.
Recall from \eqref{curvature-Omega} that  $0< B_\Omega<1$ and 
the metric $\Omega$ satisfies
\[
	-B_\Omega \omega^2 \leq 2n dd^c \Omega \leq B_\Omega \omega^2, \quad
	-B_\Omega \omega^3 \leq 4n^2 d\Omega \wedge d^c \Omega \leq B_\Omega \omega^3.
\]
It follows that there exists $C_n >0$ depending only on dimension such that
\begin{equation}
\label{below-above-curvature}
	- B_\Omega  T(u, \omega, \Omega, l) 
\leq 	dd^c [\Omega_u^l \wedge \Omega^{n-l-1}]
\leq	B_\Omega T(u, \omega, \Omega, l),
\end{equation}
where 
\[
T(u, \omega, \Omega, l)  =\left[
\begin{aligned}
&\omega^2 \wedge \Omega_u^{l-1} \wedge \Omega^{n-l-1} 
+	\omega^3 \wedge \Omega_u^{l-2} \wedge \Omega^{n-l-1}	\\
&+ 	\omega^3 \wedge \Omega_u^{l-1} \wedge \Omega^{n-l-2}
+	\omega^2 \wedge \Omega_u^{l} \wedge \Omega^{n-l-2}	\\
&+	\omega^3 \wedge \Omega_u^{l-1} \wedge \Omega^{n-l-2}	
+	\omega^3 \wedge \Omega_u^{l} \wedge \Omega^{n-l-3}
\end{aligned}
\right]
\]
with the convention that $\Omega^l= \Omega_u^l= 1$ for $l\leq 0$. 
The proof goes by induction. First we write
\[
	\int_X \Omega_u^n
=	\int_X \Omega^n
	+ \int_X dd^c u \wedge \Omega^{n-1}
	+...+
	\int_X dd^c u \wedge  (\Omega + dd^c u)^{n-1}.
\]
By Stokes' theorem we have, for $0 \leq l \leq n-1$,
\[
	\int_X dd^c u \wedge \Omega_u^l \wedge \Omega^{n-l-1}
=	\int_X u dd^c [\Omega_u^l \wedge \Omega^{n-k-1}].	
\]
Therefore, by \eqref{below-above-curvature} and $u\leq 0$,
\[
\int_X \Omega_u^n
\leq		\int_X \Omega^n + B_\Omega\|u\|_\infty  \int_X \sum_{l=0}^{n-2} T(u,\omega,\Omega,l).
\]
Similarly,
\[
	\int_X \Omega_u^n
\geq	\int_X \Omega^n - B_\Omega\|u\|_\infty  \int_X \sum_{l=0}^{n-2} T(u,\omega,\Omega,l).
\]
Observe that the highest power of $\Omega_u$ in each term of $T(u, \omega, \Omega, l)$ is less than $n-1$, i.e. this power decreased by $1$ after applying the 
Stokes theorem. Thus, it is  by the  induction hypothesis and the Stokes theorem that we can  justify that for $0\leq l \leq n-2$
\[
	\int_X T(u, \omega, \Omega, l) \leq (1+ \|u\|_\infty)^{n-1} C. 
\]
and thus the proposition follows.
\end{proof}

The next result is the comparison of the Monge-Amp\`ere energy of $\Omega$-psh
functions. If $\Omega$ is  K\"ahler, i.e. $B_\Omega =0$, it is named as the fundamental inequality in \cite[Lemma 2.3]{GZ07}. 

\begin{proposition}
\label{in-e1-class}
Suppose that $\Omega = \beta + \varepsilon \omega$ with $\beta$ being a
closed semi-positive $(1,1)$-form and $0< \varepsilon <1$.
Let $u, v\in  PSH(\Omega) \cap L^\infty(X)$ be such that $u \leq v\leq -1$. 
Then,
\[
	\int_X (-v) (\Omega + dd^c v)^n 
\leq 	2^n \int_X (-u) (\Omega + dd^c u)^n 
	+ B_\Omega  \|u\|_\infty^{2n} \|v\|_\infty^{n} C,
\]
where $B_\Omega = \varepsilon B$ is the constant in \eqref{curvature-Omega}.
\end{proposition}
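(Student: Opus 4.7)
The strategy is to adapt the fundamental inequality of Guedj--Zeriahi \cite[Lemma 2.3]{GZ07} from the K\"ahler to the Hermitian setting. Since $\beta$ is closed, $d\Omega=\varepsilon\,d\omega$ and $dd^c\Omega=\varepsilon\,dd^c\omega$, so every Stokes-type torsion term produced below will carry at least one factor of $\varepsilon$, hence of $B_\Omega=\varepsilon B$; this is what will generate the additive error $B_\Omega\|u\|_\infty^{2n}\|v\|_\infty^n C$ in the final estimate.

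The plan is to prove by induction on $0\le k\le n$ the auxiliary statement
\[
\int_X(-v)\,\Omega_v^{k}\wedge\Omega^{n-k}\le 2^{k}\int_X(-u)\,\Omega_u^{k}\wedge\Omega^{n-k}+B_\Omega\,\|u\|_\infty^{2k}\|v\|_\infty^{k}\,C_k,
\]
with the case $k=n$ being the proposition. The case $k=0$ is immediate from $0\le -v\le -u$ and positivity of $\Omega^n$. For the inductive step I would write $\Omega_v^{k+1}=\Omega_v^{k}\wedge\Omega+\Omega_v^{k}\wedge dd^c v$, apply the inductive hypothesis to the first piece (absorbing a factor of $2$), and integrate by parts on the second. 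In the K\"ahler case the latter becomes $\int dv\wedge d^c v\wedge\Omega_v^{k}\wedge\Omega^{n-k-1}$; writing $v=u+(v-u)$ and using $v-u\ge 0$ together with $-v\le-u$, this can be transformed into at most $\int(-u)\,\Omega_u^{k+1}\wedge\Omega^{n-k-1}$, producing the second factor of $2$.

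In the Hermitian setting each Stokes step generates additional torsion contributions of the form
\[
\int_X (-v)\,dd^c\Omega\wedge\Omega_v^{k-1}\wedge\Omega^{n-k-1}\quad\text{and}\quad\int_X du\wedge d^c\Omega\wedge\Omega_v^{k-1}\wedge\Omega^{n-k-1},
\]
together with similar expressions mixing $u$ and $v$. Terms of the first type are handled directly by \eqref{curvature-Omega}: they are controlled by $B_\Omega\|v\|_\infty$ times the mass of $\Omega_v^{k-1}\wedge\omega^2\wedge\Omega^{n-k-1}$, which by Proposition~\ref{estimate-mass} is bounded by $\int_X\Omega^n$ plus an error of size $B_\Omega\cdot C$. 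The genuinely mixed second type is handled by the Cauchy--Schwarz inequality of Proposition~\ref{C-S-H}: it factorises into $\bigl(\int du\wedge d^c u\wedge\omega\wedge T\bigr)^{1/2}$ times $\bigl(\int\omega^2\wedge T\bigr)^{1/2}$, where the first factor is brought under control by a further integration by parts on $u$ (producing a factor of $\|u\|_\infty$) and the second by Proposition~\ref{estimate-mass}.

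The main technical obstacle will be the careful bookkeeping of the powers of $\|u\|_\infty$ and $\|v\|_\infty$: every use of Proposition~\ref{CLN} or Cauchy--Schwarz injects an extra $L^\infty$-factor, and one must verify that after $n$ iterations the cumulative contribution is no larger than $\|u\|_\infty^{2n}\|v\|_\infty^n$, as stated. Once this accounting is in place, taking $k=n$ in the auxiliary inequality concludes the proof.
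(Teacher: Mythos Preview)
Your high-level strategy is right and matches the paper: adapt the Guedj--Zeriahi argument, with every Stokes step producing torsion terms proportional to $B_\Omega=\varepsilon B$, and control these via the Cauchy--Schwarz inequality (in the form of Lemma~\ref{C-S-H-2}) together with the mass bound of Proposition~\ref{estimate-mass}. That part of your outline is essentially what the paper does, and your remarks about the bookkeeping of $\|u\|_\infty,\|v\|_\infty$ powers are on point.

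The gap is in your inductive scheme. After splitting $\Omega_v^{k+1}=\Omega_v^k\wedge\Omega+\Omega_v^k\wedge dd^c v$, the second piece gives (already in the K\"ahler model)
\[
(II)=\int_X dv\wedge d^c v\wedge\Omega_v^k\wedge\Omega^{n-k-1},
\]
which still carries the factor $\Omega_v^k$. You assert $(II)\le\int_X(-u)\,\Omega_u^{k+1}\wedge\Omega^{n-k-1}$, but ``writing $v=u+(v-u)$'' does not do this: there is no mechanism that trades all $k$ copies of $\Omega_v$ for $\Omega_u$ in a single step. Bounding $(II)$ by an integral involving only $\Omega_u$'s is essentially as hard as the proposition itself, so the induction does not close.

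The paper iterates along a different axis. It proves once the single-step inequality
\[
\int_X(-u)\,\Omega_v\wedge T\;\le\;2\int_X(-u)\,\Omega_u\wedge T+B_\Omega\,\|u\|_\infty^2\|v\|_\infty\,C(T)
\]
for a background form $T$ that is a wedge of factors $\Omega_u,\Omega_v$ (the key observation being $d\Omega_u=d\Omega_v=d\Omega$, so the torsion analysis is identical). The proof of this one step is exactly the computation you anticipate: split $\Omega_v=\Omega+dd^cv$, show $\int(-u)\,\Omega\wedge T\le\int(-u)\,\Omega_u\wedge T$ up to torsion, and swap $\int(-u)\,dd^cv\wedge T$ to $\int(-v)\,dd^cu\wedge T$ by Stokes (this is where Cauchy--Schwarz enters). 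Then, after the trivial $\int(-v)\,\Omega_v^n\le\int(-u)\,\Omega_v^n$, apply the single-step inequality $n$ times with $T=\Omega_v^{n-1},\ \Omega_u\wedge\Omega_v^{n-2},\ \ldots,\ \Omega_u^{n-1}$, replacing one $\Omega_v$ by one $\Omega_u$ at each stage. This is the correct induction variable; your scheme should be reorganised accordingly.
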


To prove this proposition we need a Cauchy-Schwarz type inequality for the metric $\Omega$, which is an immediate consequence of Proposition~\ref{C-S-H}.

\begin{lemma} 
\label{C-S-H-2}
Let $T$ be a positive current of bidegree $(n-2, n-2)$ of the form $\omega_{v_1} \wedge \omega_{v_2} \wedge ...\wedge \omega_{v_{n-2}}$, where $v_1,...,v_{n-2} \in PSH(\omega) \cap L^\infty(X)$. 
 Let $\Omega := \beta + \varepsilon \omega$, where
$\beta$ is a semi-positive $(1,1)$-form and $0< \varepsilon<1$.
There exists a uniform constant $0< C$ (independent of $\varepsilon$) such that for any 
$u\in PSH(\Omega) \cap L^\infty(X)$
\[
	\left| \int_X d u \wedge d^c \Omega \wedge T \right|
\leq	B_\Omega C\left ( \int_X du \wedge d^c u \wedge \omega \wedge T 
+	\int_X\omega^2 \wedge T \right),
\]
where $B_\Omega = \varepsilon B$ is the constant satisfying \eqref{curvature-Omega}.
\end{lemma}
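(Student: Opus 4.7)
The proof should be short and essentially follow from the previous Cauchy--Schwarz proposition combined with one application of AM--GM. Here is how I would structure it.

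The first observation is that since $\beta$ is closed, $d\Omega = \varepsilon\, d\omega$ and $d^c\Omega = \varepsilon\, d^c\omega$, so that
\[
	du \wedge d^c\Omega \wedge T = \varepsilon \, du \wedge d^c\omega \wedge T
\]
as currents of top bidegree. This is the step that manufactures the crucial factor of $\varepsilon$ (which will later be recognized as $B_\Omega/B$), and it is the only place where closedness of $\beta$ is used.

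Next I would apply Proposition~\ref{C-S-H} directly to the Hermitian metric $\omega$ and the positive current $T = \omega_{v_1}\wedge\cdots\wedge\omega_{v_{n-2}}$: since $u \in PSH(\Omega) \cap L^\infty(X) \subset PSH(C\omega) \cap L^\infty(X)$ (because $\Omega \leq C\omega$ for some uniform constant, so $Cu \in PSH(\omega)$ after suitable rescaling, or more simply one may rewrite $u$ as a scaled $\omega$-psh function; alternatively the proof of Proposition~\ref{C-S-H} only used $du \wedge d^c u \geq 0$ pointwise, which holds for any $u$ locally of bounded variation), one obtains
\[
	\left|\int_X du \wedge d^c\omega \wedge T\right|
	\leq C \Bigl(\int_X du \wedge d^c u \wedge \omega \wedge T\Bigr)^{1/2}
	\Bigl(\int_X \omega^2 \wedge T\Bigr)^{1/2}.
\]

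Finally, multiplying by $\varepsilon$ and applying the elementary inequality $\sqrt{ab} \leq (a+b)/2$, I get
\[
	\left|\int_X du \wedge d^c\Omega \wedge T\right|
	\leq \frac{\varepsilon C}{2}\Bigl(\int_X du \wedge d^c u \wedge \omega \wedge T
	+ \int_X \omega^2 \wedge T\Bigr).
\]
Using $\varepsilon = B_\Omega/B$ absorbs the factor $\varepsilon$ into $B_\Omega$ with the new uniform constant $C/(2B)$, which is independent of $\varepsilon$ as required.

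There is no real obstacle; the only subtle point is verifying that Proposition~\ref{C-S-H} applies to $u \in PSH(\Omega)$ (not necessarily $\omega$-psh), which is handled either by rescaling (since $\Omega \leq C\omega$ implies that a multiple of $u$ plus a smooth correction is $\omega$-psh) or, more naturally, by noting that the proof of Proposition~\ref{C-S-H} is local and only uses the positivity of $du \wedge d^c u$, which holds for any bounded $u$; plurisubharmonicity is needed only to control $T$.
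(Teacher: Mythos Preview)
Your proof is correct and follows the same approach as the paper: observe that $d^c\Omega = \varepsilon\, d^c\omega$ because $\beta$ is closed, then apply Proposition~\ref{C-S-H} and convert the product bound into a sum via $\sqrt{ab}\leq (a+b)/2$. The paper's own proof is extremely terse (two sentences) and does not spell out either the AM--GM step or the issue of $u\in PSH(\Omega)$ versus $u\in PSH(\omega)$, so your added justification is useful; note however that your remark ``$du\wedge d^c u\geq 0$ holds for any $u$ locally of bounded variation'' is imprecise---the cleanest way to handle this is your rescaling observation that $\Omega\leq C\omega$ forces $PSH(\Omega)\subset PSH(C\omega)$, and the inequality in Proposition~\ref{C-S-H} is homogeneous of degree one in $u$.
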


\begin{proof}
We only need to observe that
\[
	d^c \Omega = \varepsilon d^c \omega
\]
and $B>0$ is a fixed uniform constant. Then, a simple application of Proposition~\ref{C-S-H} will give us the desired inequality.
\end{proof}

We are going to prove Proposition~\ref{in-e1-class}.

\begin{proof}
 Since $1\leq -v \leq -u$, we can replace $-v$ by $-u$ right away, then use the Stokes theorem to interchange roles of $u$ and $v$ in the integral and so on. This is how it was done for compact K\"ahler manifolds. However,  $\omega$ now is not closed, each time one applies the Stokes theorem some extra terms appear. The Cauchy-Schwarz type inequality and Proposition~\ref{curvature-Omega} are used to estimate these terms.
 
We write $\Omega_v = \Omega + dd^c v$.
Then
\begin{equation}
\label{e1-class-eq0}
	\int_X - v \Omega_v \wedge \Omega^{n-1}
\leq 	\int_X -u \Omega_v \wedge \Omega^{n-1}	\\
=	\int_X -u   \Omega^{n} +
	\int_X -u dd^c v \wedge \Omega^{n-1}.
\end{equation}
First, we are going to show that
\begin{equation}
\label{e1-class-eq1}
\begin{aligned}
	 \int_X -u \Omega^{n}
 \leq \int_X &-u \Omega_u \wedge \Omega^{n-1}  + 	\\
& +	 B_\Omega \|u\|_\infty^2 C_n  \int_X \left( \omega^2 \wedge \Omega^{n-2}  
	+ \omega^3 \wedge \Omega^{n-3} \right)		
\end{aligned}
\end{equation}
and
\begin{equation}
\label{e1-class-eq2}
\begin{aligned}
&	 \int_X -u dd^c v \wedge \Omega^{n-1} \leq \int_X -u \Omega_u \wedge \Omega^{n-1}	+ 	 \\
&\quad \qquad + B_\Omega \|u\|_\infty^2 \|v\|_\infty 
C \int_X (\omega^2 \wedge \Omega^{n-2}	+ \omega^3 \wedge \Omega^{n-3} + \omega^4 \wedge \Omega^{n-4}).
\end{aligned}
\end{equation}
To prove \eqref{e1-class-eq1}, we write
\[
	\int_X -u \Omega^n 
=	\int_X -u \Omega_u \wedge \Omega^{n-1} 
	+ \int_X u dd^c u \wedge\Omega^{n-1}.
\]
By Stokes' theorem
\[
	\int_X -u dd^c u \wedge \Omega^{n-1}
=	\int_X du \wedge d^c u \wedge \Omega^{n-1}
+	 (n-1)	
	\int_X u du \wedge d^c \Omega \wedge \Omega^{n-2}.	
\]
Again,
\begin{align*}
&	2 \int_X u du \wedge d^c \Omega \wedge \Omega^{n-2}
=	\int_X d(-u)^2 \wedge d^c \Omega \wedge \Omega^{n-2}	\\
&=	\int_X - (-u)^2 dd^c \Omega \wedge \Omega^{n-2}
- 	(n-2)  \int_X (-u)^2 d\Omega \wedge d^c \Omega 
	\wedge \Omega^{n-3}	\\
&\geq 	- B_\Omega \|u\|_\infty^2 \int_X \left[\omega^2 \wedge \Omega^{n-2} + \omega^3 \wedge \Omega^{n-3}\right].
\end{align*}
So, as $\int_X du \wedge d^c u \wedge \Omega^{n-1} \geq 0$, 
we get that
\[
	\int_X -u dd^c u \wedge \Omega^{n-1} 
\geq	- \frac{ (n-1)}{2} B_\Omega \|u\|_\infty^2 
		 \int_X \left[ \omega^2 \wedge \Omega^{n-2} + (n-2)\omega^3 \wedge \Omega^{n-3} \right] .
\]
Thus, \eqref{e1-class-eq1} is established. 

We continue to prove 
\eqref{e1-class-eq2}. By the Stokes theorem
\begin{equation}
\label{sum-2}
\begin{aligned}
	\int_X -u dd^c v \wedge \Omega^{n-1}
&=	\int_X -v dd^c u \wedge \Omega^{n-1}
\\ 
&\quad -	2(n-1)  \int_X v du \wedge d^c \Omega \wedge 
		\Omega^{n-2}	\\
&\quad + (n-1)(n-2)\int_X -v u d\Omega \wedge d^c \Omega
		\wedge \Omega^{n-3}.
\end{aligned}
\end{equation}
We proceed to estimate the right hand side of \eqref{sum-2}. The first term is bounded as follows.
\begin{equation}
\label{first-term}
	\int_X -v dd^c u \wedge \Omega^{n-1} 
\leq 	\int_X -u \Omega_u \wedge \Omega^{n-1}.
\end{equation}
The last term, according to \eqref{curvature-Omega}, satisfies
\begin{equation}
\label{last-term}
	(n-1)(n-2) \int_X -v u d\Omega \wedge d^c \Omega \wedge \Omega^{n-3}
\leq 	B_\Omega \|u\|_\infty \|v\|_\infty \int_X \omega^3 \wedge \Omega^{n-3}.
\end{equation}
For the second term, we apply Lemma~\ref{C-S-H-2} to $T = -v \Omega^{n-2}$, then
we get
\begin{equation}
\label{second-term}
	\int_X -v d^cu \wedge d \Omega \wedge \Omega^{n-2} 
\leq B_\Omega C\left (
\begin{aligned}
 &\int_X -v du \wedge d^c u \wedge \omega \wedge \Omega^{n-2}+ \\
 &+ \int_X-v \omega^2 \wedge \Omega^{n-2} 
\end{aligned}
\right)
\end{equation}
where $C>0$ depends only on $X, \omega$.  
It follows that the right hand side of 
\eqref{second-term} is bounded by
\begin{equation}
\label{second-term-1}
	B_\Omega \|v\|_\infty C
	\left ( \int_X du \wedge d^c u \wedge \omega \wedge \Omega^{n-2} + \int_X\omega^2 \wedge \Omega^{n-2} \right).
\end{equation}
Moreover, since $2du\wedge d^c u = dd^c (-u)^2 - 2 u dd^c u$, we have
\begin{align*}
	\int_X du \wedge d^c u \wedge \omega \wedge \Omega^{n-2}
&= \int_X dd^c(-u)^2 \wedge \omega \wedge\Omega^{n-2}
	- 2\int_X udd^cu \wedge \omega \wedge \Omega^{n-2} \\
&=	\int_X (-u)^2 dd^c[\omega \wedge \Omega^{n-2}] 
	+ 2\int_X (-u) \Omega_u \wedge \omega \wedge \Omega^{n-2} \\
&\quad 	- 2 \int_X (-u) \Omega\wedge \omega \wedge \Omega^{n-2},
\end{align*}
where in the second equality we used the Stokes theorem.
From this, it is clear that 
\[
	\int_X du \wedge d^c u \wedge \omega \wedge \Omega^{n-2}
\leq	\|u\|_\infty^2 C \int_X (\omega^2 \wedge \Omega^{n-2}	+
		\omega^3 \wedge \Omega^{n-3} + \omega^4 \wedge \Omega^{n-4}).
\]
Combining this and \eqref{second-term-1}, then \eqref{second-term}, we obtain
\begin{equation}
\label{second-term-2}
\begin{aligned}
&	\int_X v d^cu \wedge d \Omega \wedge \Omega^{n-2}	
\leq \\
&\leq	B_\Omega \|u\|_\infty^2 \|v\|_\infty C \int_X (\omega^2 \wedge \Omega^{n-2}	+ \omega^3 \wedge \Omega^{n-3} + \omega^4 \wedge \Omega^{n-4}).
\end{aligned}
\end{equation}
Then,  \eqref{e1-class-eq2} follows from \eqref{first-term}, \eqref{last-term} and \eqref{second-term-2}. 

According to \eqref{e1-class-eq0}, \eqref{e1-class-eq1} and \eqref{e1-class-eq2}, we have
\begin{align*}
	\int_X (- v) \Omega_v \wedge \Omega^{n-1}
\leq		2 \int_X(- u) \Omega_u \wedge \Omega^{n-1}
	+  B_\Omega  \|u\|_\infty^2 \|v\|_\infty C.
\end{align*}
Because 
\[
	d\Omega_v = d\Omega, \quad d^c \Omega_v = d^c \Omega,
\]
we may replace $\Omega$ by $\Omega_v$ in all argument above. Then, we get
\begin{equation}
\label{inter-v-u}
	\int_X (-v) \Omega_v^n
\leq		2 \int_X (-u) \Omega_u \wedge \Omega_v^{n-1} 
	 +	B_\Omega  \|u\|_\infty^{2} \|v\|_\infty^{n} C,
\end{equation}
where the constant $\|v\|_\infty^{n-1}C>0$, by Proposition~\ref{estimate-mass}, 
is  the upper bound for
\[
	 \int_X (\omega^2 \wedge \Omega_v^{n-2}	
	 + \omega^3 \wedge \Omega_v^{n-3} 
	 + \omega^4 \wedge \Omega_v^{n-4})
\]
instead of $\int_X (\omega^2 \wedge \Omega^{n-2}	+ \omega^3 \wedge \Omega^{n-3} + \omega^4 \wedge \Omega^{n-4})$ on the right hand side of
\eqref{second-term-2}.
Similarly, 
we show by induction ($0<B_\Omega<1$ and $u,v \leq -1$) that
\begin{align*}
	\int_X \omega^k \wedge \Omega_ u^{n-k}
\leq	\|u\|_\infty^{n-k} C, \quad
	\int_X \omega^k \wedge \Omega_v^{n-k}
\leq	 \|v\|_\infty^{n-k} C.
\end{align*}
Thus, we have seen in \eqref{inter-v-u} that one term $\Omega_v$ is replaced by $\Omega_u$. We continue replacing $\Omega_v^{n-1}$ by
$\Omega_u \wedge \Omega_v^{n-2}$ and so on. Finally, 
\begin{align*}
	\int_X (-v) (\Omega + dd^c v)^n
\leq		2^n \int_X (-u) (\Omega + dd^c u)^n 
	 +	B_\Omega  \|u\|_\infty^{2n} \|v\|_\infty^{n}C.
\end{align*}
Thus, the proof is finished.
\end{proof}

On a general compact Hermitian manifold $X$ there may not exist a smooth closed semi-positive $(1,1)$ form $\beta$ such that $\int_X \beta^n >0$ and there are also non-K\"ahler manifolds possessing such forms (see e.g. \cite[Example 1.8, 1.5]{chiose13}). If such 
a form exists, then the notion $cap_\beta$ makes sense and more importantly the volume-capacity inequality (\cite{kolodziej98, kolodziej05}, \cite{EGZ09}, \cite{demailly-pali10}) still holds.

\begin{proposition}\cite[Lemma 2.9]{demailly-pali10}
\label{vol-cap-ine}
Let $(X, \omega)$ be a $n$-dimensional compact Hermitian manifold.
Assume that $\beta$ is a smooth closed semi-positive $(1,1)$-form on $X$ satisfying 
$\int_X \beta^n>0$. We define for any Borel set $E\subset X$,
\[
	cap_\beta(E) = \sup \left\{ \int_E (\beta + dd^c v)^n : v \in PSH(X, \beta), \quad
	0\leq v \leq 1\right\},
\]
where $PSH(X, \beta)$ is the set of all $\beta$-psh functions on $X$. 
Then, there exists uniform constant $a, C>0$ such that
\[
	Vol_\omega(E)= \int_E \omega^n \leq C \exp \left(-\frac{a}{cap_\beta^\frac{1}{n}(E)} \right).
\]
\end{proposition}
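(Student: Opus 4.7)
This is \cite[Lemma~2.9]{demailly-pali10}; the plan is to combine the classical local volume-capacity inequality on coordinate balls with a capacity comparison that globalizes to $cap_\beta$.

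By inner regularity of $cap_\beta$ and outer regularity of $Vol_\omega$, assume $E$ is compact with $V := Vol_\omega(E) > 0$. Cover $X$ by finitely many coordinate balls $\{B_j(s)\}_{j \in J}$ small enough that each $B_j(2s)$ carries a smooth plurisubharmonic reference $\rho_j$, chosen as a large multiple of $|z - x_j|^2 - s^2$ in local coordinates, with $dd^c \rho_j \geq \omega + \beta$ on $B_j(2s)$. The classical local volume-capacity inequality in $\mathbb{C}^n$ then yields uniform $a_1, C_1 > 0$ such that
\[
Vol_\omega(F) \;\leq\; C_1\exp\bigl(-a_1\,cap_{BT}(F, B_j(2s))^{-1/n}\bigr) \qquad \text{for every Borel } F \subset B_j(s);
\]
see \cite[p.~52]{kolodziej05}.

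The next step is a capacity comparison $cap_{BT}(F, B_j(2s)) \leq \Lambda\, cap_\beta(F)$ for $F \subset B_j(s)$, with $\Lambda > 0$ uniform in $j$. Given a Bedford-Taylor test function $u \in PSH(B_j(2s))$ with $0 \leq u \leq 1$, I would set $v_{\mathrm{loc}} := \alpha u + \kappa \rho_j + \gamma_j$ for suitable small $\alpha, \kappa > 0$ and normalization $\gamma_j$. On $B_j(2s)$ one checks $\beta + dd^c v_{\mathrm{loc}} \geq \kappa \omega > 0$ and $v_{\mathrm{loc}} \in [0,1]$ after rescaling. The quadratic growth of $\rho_j$ furnishes a constant $\eta$ with $v_{\mathrm{loc}}|_{B_j(s)} > \eta > v_{\mathrm{loc}}|_{B_j(2s) \setminus B_j(3s/2)}$, enabling a Richberg-type gluing: take $v := \max(v_{\mathrm{loc}}, \eta)$ on $B_j(2s)$ and $v := \eta$ outside. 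Then $v \in PSH(X,\beta) \cap [0,1]$ and $(\beta + dd^c v)^n \geq \alpha^n (dd^c u)^n$ on $B_j(s)$, so $cap_\beta(F) \geq \alpha^n\, cap_{BT}(F, B_j(2s))$, i.e.\ $\Lambda = \alpha^{-n}$.

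Substituting the comparison into the local inequality, applying it to $F = E \cap B_j(s)$, and summing over $j \in J$ using $cap_\beta(E \cap B_j(s)) \leq cap_\beta(E)$ delivers the desired bound with $a := a_1 \Lambda^{-1/n}$ and $C := |J|\, C_1$. The main obstacle is the gluing step: one must quantitatively choose $\alpha, \kappa, \gamma_j$ so that $v_{\mathrm{loc}}$ strictly exceeds $\eta$ on $B_j(s)$ and is strictly less than $\eta$ on the outer collar $B_j(2s) \setminus B_j(3s/2)$, guaranteeing that $v$ is well-defined, globally $\beta$-psh, and within $[0,1]$. This hinges on the smoothness of $\beta$ (so $dd^c \rho_j$ can be arranged to dominate $\beta + \omega$ on $B_j(2s)$) together with the explicit quadratic geometry of $\rho_j$, which provides the needed separation between $B_j(s)$ and the collar.
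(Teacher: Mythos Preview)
The paper does not give its own proof of this proposition; it quotes \cite[Lemma~2.9]{demailly-pali10} and uses the statement as a black box. Evaluating your attempt directly: the gluing step is wrong in a way that is not repairable within your scheme. With $\rho_j$ a large positive multiple of $|z-x_j|^2-s^2$ you have $\rho_j\leq 0$ on $B_j(s)$ and $\rho_j\geq \tfrac{5}{4}Ms^2$ on the collar $B_j(2s)\setminus B_j(3s/2)$. Since $\alpha u\in[0,\alpha]$ is uniformly bounded while $\kappa\rho_j$ is large and positive on the collar, $v_{\mathrm{loc}}=\alpha u+\kappa\rho_j+\gamma_j$ is \emph{larger} on the collar than on $B_j(s)$: your asserted separation $v_{\mathrm{loc}}|_{B_j(s)}>\eta>v_{\mathrm{loc}}|_{\mathrm{collar}}$ holds with the inequalities reversed. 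Hence $\max(v_{\mathrm{loc}},\eta)=v_{\mathrm{loc}}\neq\eta$ on the collar, the two pieces do not match across $\partial B_j(2s)$, and $v$ fails to be globally $\beta$-psh. You cannot cure this by changing the sign of $\rho_j$, because any smooth $\rho_j$ with $dd^c\rho_j\geq\omega+\beta>0$ is strictly plurisubharmonic and therefore cannot be radially decreasing.

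The deeper issue is that your construction never uses the hypothesis $\int_X\beta^n>0$, and the conclusion is false without it: if $\beta=0$ then $PSH(X,\beta)$ consists only of constants, so $cap_\beta\equiv 0$, while $cap_{BT}$ is positive on open sets; no inequality $cap_{BT}\leq\Lambda\,cap_\beta$ can hold. Remark~\ref{cap-rk} of this paper states the comparability of $cap_\omega$ with the local Bedford--Taylor capacity precisely ``because of the positivity of $\omega$''; for a merely semi-positive $\beta$ that mechanism is unavailable. The proof in \cite{demailly-pali10} proceeds by a different route: uniform exponential integrability $\int_X e^{-a\psi}\,\omega^n\leq C$ for normalized $\psi\in PSH(X,\beta)$ (local Skoda plus compactness), combined with an Alexander--Taylor comparison between $cap_\beta(E)$ and the global extremal function of $E$, in which the bigness $\int_X\beta^n>0$ and the closedness of $\beta$ enter through Stokes' theorem on the total Monge--Amp\`ere mass.
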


We end this section with the mixed type inequality in the Hermitian setting. We refer the reader to \cite{dinew09} for the most general form of this kind of inequality.
\begin{lemma}
\label{mixine}
Let $0 \leq f, g \in L^1(\omega^n)$ and $u, v \in PSH(\omega) \cap L^\infty(X)$. Suppose that $\omega_u^n \geq f \omega^n$ and $\omega_v^n  \geq g \omega^n$ on $X$. Then for $ k = 0, ...,  n$
\[
	\omega_u^k \wedge \omega_v^{n-k} 
		\geq f^{\frac{k}{n}} \, g^{\frac{n-k}{n}} \omega^n \quad \mbox{ on } \quad 
	X.
\]
In particular, for $0< \delta<1$,
\[
	\omega_{\delta u + (1-\delta) v}^n 
	\geq  \left[  \delta f^\frac{1}{n} 
			+ (1-\delta) g^\frac{1}{n} \right]^n \omega^n
	\quad \mbox{ on } \quad 
	X.
\]
\end{lemma}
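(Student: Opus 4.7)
The strategy is to reduce the statement to a pointwise linear-algebra inequality. Crucially, the inequality involves only densities of positive $(n,n)$-currents with respect to $\omega^n$, so the fact that $\omega$ is merely Hermitian (and not closed) plays no role whatsoever: the closedness assumption was only needed in the earlier estimates where the Stokes theorem had to be applied. Thus the proof is really the same as in the K\"ahler case, essentially due to the classical inequality of Dinew \cite{dinew09} which the paper already cites.

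First I would treat the smooth case. Fix a point $x \in X$ and choose unitary coordinates in which $\omega(x) = i \sum_j dz_j \wedge d\bar z_j$ and $\omega_u(x), \omega_v(x)$ are simultaneously diagonalized, with eigenvalues $\lambda_1,\dots,\lambda_n \geq 0$ and $\mu_1,\dots,\mu_n \geq 0$ respectively. A direct expansion gives
\[
\omega_u^k \wedge \omega_v^{n-k}\bigl|_x
= \binom{n}{k}^{-1} \biggl(\sum_{|I|=k} \prod_{i \in I} \lambda_i \prod_{j \notin I} \mu_j\biggr) \omega^n\bigl|_x .
\]
Applying the AM--GM inequality to the $\binom{n}{k}$ terms on the right, and using that each $\lambda_i$ (resp.\ $\mu_j$) appears in exactly $\binom{n-1}{k-1}$ (resp.\ $\binom{n-1}{k}$) of them, gives
\[
\omega_u^k \wedge \omega_v^{n-k}\bigl|_x
\geq \Bigl(\prod_i \lambda_i\Bigr)^{k/n} \Bigl(\prod_i \mu_i\Bigr)^{(n-k)/n} \omega^n\bigl|_x
= \left(\tfrac{\omega_u^n}{\omega^n}\right)^{k/n} \left(\tfrac{\omega_v^n}{\omega^n}\right)^{(n-k)/n} \omega^n\bigl|_x,
\]
which, combined with $\omega_u^n \geq f \omega^n$ and $\omega_v^n \geq g \omega^n$, yields the desired pointwise inequality.

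To promote this to general $u, v \in PSH(\omega) \cap L^\infty(X)$, I would use local Bedford--Taylor regularization. Cover $X$ by coordinate balls on which $\omega$ admits a smooth local quasi-potential, write $u = \rho + \tilde u$ locally with $\tilde u$ classically plurisubharmonic, and approximate $\tilde u$ (and similarly $\tilde v$) by smooth psh functions $\tilde u_\varepsilon \downarrow \tilde u$ obtained by standard convolution. The Bedford--Taylor convergence theorem (valid in the Hermitian setting, cf.\ \cite{DK12, cuong-kolodziej13}) gives
\[
\omega_{u_\varepsilon}^k \wedge \omega_{v_\varepsilon}^{n-k} \longrightarrow \omega_u^k \wedge \omega_v^{n-k}
\]
weakly as currents. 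The smooth-case inequality holds for $u_\varepsilon, v_\varepsilon$ with $f, g$ replaced by their densities $\omega_{u_\varepsilon}^n/\omega^n$, $\omega_{v_\varepsilon}^n/\omega^n$; passing to the weak limit and using that both hypotheses $\omega_u^n \geq f \omega^n$ and $\omega_v^n \geq g \omega^n$ survive (up to a standard semicontinuity argument for the densities) yields the claimed inequality.

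For the final statement, since $\omega_{\delta u + (1-\delta) v} = \delta \omega_u + (1-\delta) \omega_v$, the binomial theorem gives
\[
\omega_{\delta u + (1-\delta) v}^n = \sum_{k=0}^n \binom{n}{k} \delta^k (1-\delta)^{n-k} \omega_u^k \wedge \omega_v^{n-k},
\]
and applying the first part term by term and collapsing the sum back via the binomial theorem produces $[\delta f^{1/n} + (1-\delta) g^{1/n}]^n \omega^n$. The main (and only) obstacle is the approximation step, since $u, v$ are only bounded; this is exactly the point handled in \cite{dinew09}, which the author cites, so it may be simpler just to invoke that result directly.
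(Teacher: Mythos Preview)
Your argument has two gaps. The minor one is in the smooth case: you cannot in general simultaneously diagonalize $\omega$, $\omega_u$, and $\omega_v$ at a point---only two Hermitian forms (one of them positive definite) can be simultaneously diagonalized. The pointwise inequality you want is still true (it is the mixed-discriminant inequality $D(A,\ldots,A,B,\ldots,B) \geq (\det A)^{k/n}(\det B)^{(n-k)/n}$ for positive semidefinite Hermitian $A,B$), but it requires a different justification than AM--GM on eigenvalues. The serious gap is the approximation step: from $\omega_{u_\varepsilon}^k \wedge \omega_{v_\varepsilon}^{n-k} \geq h_\varepsilon^{k/n} j_\varepsilon^{(n-k)/n}\omega^n$ with $h_\varepsilon = \omega_{u_\varepsilon}^n/\omega^n$, you cannot pass to the weak limit on the right-hand side. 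Weak convergence $h_\varepsilon\omega^n \to \omega_u^n \geq f\omega^n$ says nothing about weak limits of $h_\varepsilon^{k/n}$; for $k<n$ this is a concave function of $h_\varepsilon$ and can lose mass if $h_\varepsilon$ concentrates. There is no ``standard semicontinuity argument for the densities'' that repairs this---it is precisely the non-trivial content of the lemma, as you yourself concede at the end.

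The paper's proof takes a different route that sidesteps this difficulty entirely: it localizes to a ball, invokes the solvability of the Dirichlet problem for $(\omega + dd^c w)^n = f\omega^n$ with the Hermitian background form (supplied by \cite[Section~4]{cuong-kolodziej13}), and then uses the stability/comparison estimates for such solutions to deduce the mixed inequality, exactly as in \cite[Lemma~6.2]{kolodziej05}. This is also essentially the mechanism behind \cite{dinew09}, so your proposed fallback of ``just invoking Dinew'' is the paper's approach in disguise; the convolution-and-weak-limit route does not stand on its own.
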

\begin{proof} This is a local problem. The proof  is a consequence of
the solvability of the Dirichlet problem and the stability estimates for solutions to 
the Monge-Amp\`ere equation in a ball in $\bC^n$. Notice that the background form in the equation is the Hermitian form $\omega$. The results in \cite[Section 4]{cuong-kolodziej13} are enough for the proof as in \cite[Lemma 6.2]{kolodziej05}.
\end{proof}

\bigskip

\section{The complex Monge-Amp\`ere type equations}
\label{Sect-2}

Let $(X,\omega)$ be a $n$-dimensional compact Hermitian manifold.
In this section, we are going to study the weak solutions to the equation
\begin{equation}
\label{mae-type}
\begin{aligned}
&	\varphi \in PSH(\omega) \cap L^\infty(X), \\
&	(\omega + dd^c \varphi)^n = e^{\lambda \varphi} f \omega^n, \quad
	\lambda \geq 0,
\end{aligned}
\end{equation}
where $0\leq f \in L^p(\omega^n)$, $p>1$. 

The continuous solutions to the equation \eqref{mae-type} for $\lambda =0$ were recently obtained in \cite{cuong-kolodziej13} 
and we will use the results in \cite{cuong-kolodziej13} to study the case $\lambda>0$. The difference is that we get not only the existence but also the uniqueness of the continuous solution.

When $\lambda>0$, after a rescaling, we only need to consider $\lambda =1$,
i.e. the equation
\begin{equation} \label{n-ma}
	(\omega + dd^c \varphi)^n = e^\varphi f \omega^n
\end{equation}
where $\varphi \in PSH(\omega) \cap L^\infty(X)$ and $0\leq f \in L^p(\omega^n)$,
$p>1$.  

When $f>0$ and $f$ is smooth Cherrier \cite{cherrier87} proved that there exists unique smooth solution. Our result can be considered as an extension of his result for non-negative right hand side in $L^p(\omega^n)$, $p>1$.

\begin{theorem}
\label{existence-ma}
Let $0\leq f \in L^p(\omega^n)$, $p>1$, be such that $\int_X f \omega^n>0$, then the equation \eqref{n-ma} has a unique continuous solution.
\end{theorem}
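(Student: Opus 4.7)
The plan is to reduce the problem to the $\lambda=0$ case solved in \cite{cuong-kolodziej13}. I would first approximate $f$ by strictly positive smooth functions $f_j$ with $f_j\to f$ in $L^p(\omega^n)$ and $\|f_j\|_p\leq 2\|f\|_p$. Cherrier's theorem \cite{cherrier87} then yields a smooth solution $\varphi_j$ of $\omega_{\varphi_j}^n=e^{\varphi_j}f_j\omega^n$. In parallel, \cite[Theorem 0.1]{cuong-kolodziej13} provides a continuous $u_j\in PSH(\omega)$ with $\sup_X u_j=0$ and a constant $c_j>0$ solving $\omega_{u_j}^n=c_j f_j\omega^n$; by \cite[Lemma 5.9, Corollary 5.6]{cuong-kolodziej13} both $c_j+c_j^{-1}$ and $\|u_j\|_\infty$ are bounded uniformly in $j$.

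The crucial uniform $L^\infty$ bound on $\varphi_j$ is obtained by a classical maximum principle applied not to $\varphi_j$ itself but to $\varphi_j-u_j$. At a maximum point $x_j$ of $\varphi_j-u_j$ the smooth functions satisfy $\omega_{\varphi_j}(x_j)\leq\omega_{u_j}(x_j)$ as $(1,1)$-forms, hence $\omega_{\varphi_j}^n(x_j)\leq\omega_{u_j}^n(x_j)$, and together with $f_j(x_j)>0$ this yields $e^{\varphi_j(x_j)}\leq c_j$. A short manipulation using $\sup_X u_j=0$ gives $\sup_X\varphi_j\leq\log c_j+\|u_j\|_\infty\leq C$, and the symmetric argument at a minimum of $\varphi_j-u_j$ gives $\inf_X\varphi_j\geq\log c_j-\|u_j\|_\infty\geq-C$. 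The essential point is that these bounds do not involve $\inf_X f_j$, which may collapse to zero; this is why comparing with $u_j$ rather than using a naive maximum principle on $\varphi_j$ alone is necessary.

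With the uniform $L^\infty$-bound in hand I extract a subsequential $L^1$-limit $\varphi\in PSH(\omega)\cap L^\infty(X)$. The Bedford--Taylor-type weak continuity of the Monge--Amp\`ere operator along uniformly bounded $\omega$-psh sequences (valid in the Hermitian setting, as used in \cite{DK12, cuong-kolodziej13}) together with H\"older's inequality $\|e^{\varphi_j}f_j-e^{\varphi}f\|_1\to 0$ (using $\|\varphi_j\|_\infty\leq C$ and $f_j\to f$ in $L^p$) shows that $\varphi$ is a weak solution of \eqref{n-ma}. To upgrade $\varphi$ to a continuous solution I would show that the convergence is in fact uniform by applying the Hermitian stability estimate from \cite{cuong-kolodziej13}: since $\omega_{\varphi_j}^n-\omega_{\varphi_k}^n=(e^{\varphi_j}f_j-e^{\varphi_k}f_k)\omega^n$ vanishes in $L^p(\omega^n)$, the stability estimate forces $\|\varphi_j-\varphi_k\|_\infty\to 0$. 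The adaptation of this stability estimate to the nonlinear right-hand side $e^{\varphi}f$ is the main technical obstacle I foresee.

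Uniqueness then follows from the strict monotonicity of the right-hand side in $\varphi$. Given two continuous solutions $\varphi^{(1)},\varphi^{(2)}$ and $\varepsilon>0$, set $A_\varepsilon=\{\varphi^{(1)}>\varphi^{(2)}+\varepsilon\}$. Applying the Hermitian comparison principle from \cite{DK12, cuong-kolodziej13} to the pair $(\varphi^{(1)},\varphi^{(2)}+\varepsilon)$, with the torsion error terms controlled by the estimates of Section~\ref{Sect-1}, gives $\int_{A_\varepsilon}\omega_{\varphi^{(1)}}^n\leq\int_{A_\varepsilon}\omega_{\varphi^{(2)}}^n$. Substituting the two equations forces $e^\varepsilon\int_{A_\varepsilon}e^{\varphi^{(2)}}f\omega^n\leq\int_{A_\varepsilon}e^{\varphi^{(2)}}f\omega^n$, so $\int_{A_\varepsilon}f\omega^n=0$; the equation then yields $\omega_{\varphi^{(1)}}^n=0$ on $A_\varepsilon$, and a further application of the comparison principle (or Lemma~\ref{mixine}) forces $A_\varepsilon=\emptyset$. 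Letting $\varepsilon\to 0$ and exchanging the roles of $\varphi^{(1)}$ and $\varphi^{(2)}$ gives $\varphi^{(1)}=\varphi^{(2)}$.
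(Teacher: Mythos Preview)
Your existence argument takes a genuinely different route from the paper for the uniform $L^\infty$ bound. The paper (Proposition~\ref{sup-bound-general} and Claim~\ref{c-uniform-bound-M_j}) bounds $M_j=\sup_X\varphi_j$ from above via the Gauduchon metric, the mixed-type inequality, and Jensen, and from below via Corollary~\ref{const-comp}; your maximum-principle comparison of $\varphi_j$ with the auxiliary solution $u_j$ is more direct and elementary. One caveat: the pointwise inequality $\omega_{\varphi_j}(x_j)\leq\omega_{u_j}(x_j)$ requires $u_j\in C^2$, but you cite \cite{cuong-kolodziej13} only for continuity; since $f_j$ is smooth and strictly positive you should invoke \cite{TW10b} to get $u_j$ smooth. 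Your convergence step is essentially the paper's, though the phrasing ``$e^{\varphi_j}f_j-e^{\varphi_k}f_k$ vanishes in $L^p$'' is circular as written (it presupposes $\varphi_j-\varphi_k\to 0$); the correct input to \cite[Corollary~5.10]{cuong-kolodziej13}, as the paper uses it, is that the right-hand sides are \emph{uniformly bounded} in $L^p$ and the normalized solutions are Cauchy in $L^1$, which then upgrades to Cauchy in $C(X)$.

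The uniqueness argument, however, has a genuine gap. On a Hermitian manifold the classical comparison principle $\int_{\{u<v\}}\omega_v^n\leq\int_{\{u<v\}}\omega_u^n$ fails in general, and the estimates of Section~\ref{Sect-1} do not restore it in the form you state. What \cite[Theorem~0.2]{cuong-kolodziej13} actually gives is the \emph{modified} comparison principle: for $0<s<\varepsilon^3/(16B)$,
\[
\int_{\{\varphi<(1-\varepsilon)\psi+m(\varepsilon)+s\}}\omega_{(1-\varepsilon)\psi}^n
\leq\Bigl(1+\frac{Cs}{\varepsilon^n}\Bigr)\int_{\{\varphi<(1-\varepsilon)\psi+m(\varepsilon)+s\}}\omega_\varphi^n,
\]
which compares $\omega_\varphi^n$ with $\omega_{(1-\varepsilon)\psi}^n$ (not $\omega_\psi^n$) on a specific sublevel set near the infimum, and carries a multiplicative error. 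The paper's Lemma~\ref{decreasing-property} is built precisely around this: the factor $e^{-m}$ coming from the exponential right-hand side beats the error $1+Cs/\varepsilon^n$ for $s$ small, yielding a contradiction. Your argument instead needs the clean K\"ahler inequality on $A_\varepsilon=\{\varphi^{(1)}>\varphi^{(2)}+\varepsilon\}$ to reach $e^\varepsilon\int_{A_\varepsilon}e^{\varphi^{(2)}}f\omega^n\leq\int_{A_\varepsilon}e^{\varphi^{(2)}}f\omega^n$; and even granting $\int_{A_\varepsilon}f\omega^n=0$, the final claim that ``$\omega_{\varphi^{(1)}}^n=0$ on $A_\varepsilon$ forces $A_\varepsilon=\emptyset$'' does not follow from the comparison principle or from Lemma~\ref{mixine} --- it would require a domination principle that is itself nontrivial in the Hermitian setting.
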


\begin{remark}
The assumption $\int_X f \omega^n>0$ is also necessary to guarantee the existence of a bounded solution to the equation (see \cite[Remark 5.7]{cuong-kolodziej13}). 
\end{remark}

The uniqueness  is just a consequence of the following statement. 

\begin{lemma} 
\label{decreasing-property}
Suppose that $\varphi, \psi \in PSH(\omega) \cap L^\infty(X)$ satisfy
\[
	(\omega + dd^c \varphi)^n = e^\varphi f \omega^n, \quad
	(\omega + dd^c \psi)^n = e^\psi g \omega^n
\]
with $0 \leq f, g \in L^p(\omega^n)$, $p>1$.  If $f \leq g$, then $\psi \leq \varphi$. 
In particular, there is at most one 
function $\varphi \in PSH(\omega) \cap C(X)$ such that
\[
	(\omega + dd^c \varphi)^n = e^\varphi f \omega^n.
\]
\end{lemma}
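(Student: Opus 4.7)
The plan is to argue by contradiction using the Hermitian comparison principle from \cite{cuong-kolodziej13, DK12} combined with the strict exponential monotonicity of the right-hand side. Suppose $M := \sup_X (\psi - \varphi) > 0$, and for $0 < \delta < M$ introduce the sublevel set $E_\delta := \{\psi > \varphi + \delta\}$, which one first shows to carry positive $\omega^n$-mass (by upper-semicontinuity of $\psi$ together with a standard regularization of $\varphi$).

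The first main step is to apply the Hermitian comparison principle to the pair $(\varphi+\delta,\psi)$. Up to the small Hermitian error term, which can be absorbed by passing to a limit in the parameters that appear in the refined comparison statement of \cite[\S3]{cuong-kolodziej13}, this gives
\[
	\int_{E_\delta} \omega_\psi^n \leq \int_{E_\delta} \omega_\varphi^n.
\]
Plugging in the two Monge-Amp\`ere equations, using $f \leq g$, and the pointwise bound $e^\psi \geq e^\delta e^\varphi$ on $E_\delta$, we obtain the chain
\[
	e^\delta \int_{E_\delta} e^\varphi g\, \omega^n
	\leq \int_{E_\delta} e^\psi g\, \omega^n
	\leq \int_{E_\delta} e^\varphi f\, \omega^n
	\leq \int_{E_\delta} e^\varphi g\, \omega^n.
\]
Since $e^\delta > 1$, this forces $\int_{E_\delta} e^\varphi g\, \omega^n = 0$, so $g = 0$ a.e. on $E_\delta$, and hence also $f=0$ a.e.\ there (using $0\leq f\leq g$). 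Consequently $\omega_\psi^n = \omega_\varphi^n = 0$ a.e. on $E_\delta$.

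The hard part will be closing the contradiction from here, since the conclusion $\omega_\psi^n = \omega_\varphi^n = 0$ on $E_\delta$ does not by itself rule out $\psi > \varphi + \delta$. The plan is to exploit the rigidity of $\omega$-psh functions whose Monge-Amp\`ere mass vanishes: both $\psi|_{E_\delta}$ and $\varphi|_{E_\delta}$ are ``maximal'' $\omega$-psh functions on $E_\delta$ in the Bedford-Taylor sense, and by the uniqueness for the homogeneous Dirichlet problem $(\omega + dd^c u)^n = 0$ they must coincide on $E_\delta$ provided they agree on $\partial E_\delta$. To justify matching boundary data (where the two functions are only upper-semicontinuous), a convenient detour is to approximate $f,g$ by the strictly positive data $f+\eta, g+\eta$ with $\eta>0$; by Theorem~\ref{existence-ma} the corresponding equations admit continuous solutions $\varphi_\eta, \psi_\eta$, so that $E_\delta^{(\eta)} = \{\psi_\eta > \varphi_\eta + \delta\}$ is open and has a genuine topological boundary on which $\psi_\eta = \varphi_\eta$. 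For these continuous data the exceptional case $g+\eta = 0$ cannot occur, so the CP/exponential step above already yields a direct contradiction and hence $\psi_\eta \leq \varphi_\eta$ on $X$. Passing to the limit $\eta\to 0$ using the stability estimates from \cite{cuong-kolodziej13} transfers the inequality to $\psi \leq \varphi$. Finally, the uniqueness assertion in the ``in particular'' clause follows by applying the established comparison in both directions ($f\leq g$ and $g\leq f$).
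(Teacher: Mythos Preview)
Your outline has two genuine gaps. First, the ``Hermitian comparison principle'' you invoke is not the clean statement $\int_{E_\delta}\omega_\psi^n\le\int_{E_\delta}\omega_\varphi^n$; the modified comparison principle of \cite[Theorem 0.2]{cuong-kolodziej13} compares $\omega_{(1-\varepsilon)\psi}^n$ (not $\omega_\psi^n$) with $\omega_\varphi^n$, on sublevel sets of $\varphi-(1-\varepsilon)\psi$, and carries a multiplicative error $(1+Cs/\varepsilon^n)$ valid only for $0<s<\varepsilon^3/(16B)$. These errors do not simply ``absorb by passing to a limit'': if you send $\varepsilon\to 0$ the factor blows up, and if you send $s\to 0$ first the set may collapse. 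The paper's argument is precisely designed around this: one fixes $\varepsilon$ small (depending on $m=\inf_X(\varphi-\psi)<0$) so that the exponential gain $(1-\varepsilon)^n e^{-m/2-2\varepsilon\|\psi\|_\infty}>1+b$ beats the error for all sufficiently small $s$, and \emph{crucially} the use of $(1-\varepsilon)\psi$ rather than $\psi$ forces $\omega_{(1-\varepsilon)\psi}^n\ge\varepsilon^n\omega^n>0$, so the relevant integral is automatically strictly positive on any nonempty open set. This is exactly what lets one divide and reach a contradiction, and it completely bypasses the ``$g=0$ on $E_\delta$'' scenario you struggle with.

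Second, your proposed detour through the perturbed data $f+\eta$, $g+\eta$ is circular. The continuous solutions $\varphi_\eta,\psi_\eta$ produced by Theorem~\ref{existence-ma} are not the given $\varphi,\psi$; to pass from $\psi_\eta\le\varphi_\eta$ to $\psi\le\varphi$ you would need $\varphi_\eta\to\varphi$ and $\psi_\eta\to\psi$, i.e.\ that the given bounded solutions coincide with the continuous ones obtained in the limit---which is precisely the uniqueness you are trying to prove. (The maximality/Dirichlet argument on $E_\delta$ is likewise not available here: $\varphi,\psi$ are only bounded and u.s.c., and there is no Hermitian domination principle for $\omega$-psh functions that would let you conclude $\varphi=\psi$ on $E_\delta$ from $\omega_\varphi^n=\omega_\psi^n=0$.) The fix is to drop the detour entirely and run the modified comparison principle directly with the $(1-\varepsilon)$-trick as in the paper.
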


\begin{proof}
We argue by contradiction. Suppose that $\{ \psi > \varphi\}$ is non-empty. Then,
$m= \inf_X (\varphi - \psi) <0$. Fix  $0< \varepsilon <<1$ to be determined later, 
and  denote by  $m(\varepsilon)= \inf_X [\varphi - (1-\varepsilon)\psi]$. It is clear that
\begin{equation*}
\label{dec-pro-e1}
	m - \varepsilon \| \psi \|_\infty \leq m(\varepsilon) \leq 
	m + \varepsilon \| \psi \|_\infty. 
\end{equation*}
Hence, on $U(\varepsilon, s):=\{ \varphi < (1-\varepsilon)\psi + m(\varepsilon) +s \}$ we have
\begin{equation}
\label{dec-pro-e2}
	\omega_{(1-\varepsilon)\psi}^n 
\geq		(1-\varepsilon)^n e^\psi g \omega^n 
\geq		(1-\varepsilon)^n e^{\varphi -m -2 \varepsilon \| \psi \|_\infty -s} g\omega^n . 
\end{equation}
The modified comparison principle \cite[Theorem 0.2]{cuong-kolodziej13} reads 
for $0< s < \varepsilon_0=\frac{\varepsilon^3}{16B}$,
\[
	\int_{\{ \varphi < (1- \varepsilon) \psi + m(\varepsilon) +s \}} 
	\omega_{(1-\varepsilon)\psi}^n 
\leq 	(1 + \frac{C s}{\varepsilon^n}) \int_{\{ \varphi < (1- \varepsilon)\psi + m(\varepsilon) +s \}} 
	\omega_\varphi^n 
\]
where $C>0$ is a uniform constant.
According to \eqref{dec-pro-e2} we get 
\[
	(1-\varepsilon)^n e^{-m- 2\varepsilon \|\psi\|_\infty - s}
	\int_{U(\varepsilon, s)}  e^\varphi f \omega^n 
\leq	(1 + \frac{C s}{\varepsilon^n}) \int_{U(\varepsilon, s)} e^\varphi f \omega^n.
\]
Note that for every $0< s < \varepsilon_0$
\[
	\int_{U(\varepsilon, s)} e^\varphi f \omega^n =	\int_{\{ \varphi < (1-\varepsilon)\psi + m(\varepsilon) +s \}} \omega_\varphi^n
	>0.
\]
Since $m <0$, we may choose $0< \varepsilon$ so small that 
\[
	(1-\varepsilon)^n e^{-\frac{m}{2} - 2\varepsilon \|\psi\|_\infty} > 1 + b 
\]
for some $b = b(m, \varepsilon)>0$.  Thus, we get for every $0< s <\min\{ \varepsilon_0,  - \frac{m}{2}\}$ that
\[
	0< b \leq \frac{C s}{\varepsilon^n}.
\]
It is impossible when $s>0$ is small enough. Thus, the proof follows.
\end{proof}

By examining the above proof, it is quite easy to get the following useful fact which is obvious in the K\"ahler case ($d\omega =0$).

\begin{corollary}
\label{const-comp}
If $u, v \in PSH(\omega) \cap L^\infty(X)$ satisfy
\[
	\omega_u^n \leq  c \; \omega_v^n
\]
for some $c>0$, then $c \geq 1$.
\end{corollary}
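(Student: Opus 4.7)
The plan is to argue by contradiction and to adapt the proof of Lemma~\ref{decreasing-property}, now reading the pointwise inequality $\omega_u^n \leq c\, \omega_v^n$ in place of the comparison $f \leq g$ between the right-hand sides. Suppose $c < 1$. For $\varepsilon \in (0,1)$ small, set
\[
	m(\varepsilon) := \inf_X \bigl[u - (1-\varepsilon) v \bigr] \quad \text{and} \quad U(\varepsilon, s) := \{u < (1-\varepsilon) v + m(\varepsilon) + s\}
\]
for $s > 0$.

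The first ingredient is the pointwise inequality of positive $(n,n)$-currents
\[
	\omega_{(1-\varepsilon) v}^n = \bigl((1-\varepsilon)\omega_v + \varepsilon \omega\bigr)^n \geq (1-\varepsilon)^n\, \omega_v^n \geq \frac{(1-\varepsilon)^n}{c}\, \omega_u^n,
\]
which uses the hypothesis $\omega_u^n \leq c\, \omega_v^n$ in the last step. The second ingredient is the modified comparison principle \cite[Theorem~0.2]{cuong-kolodziej13}, applied to the pair $(u, v)$ exactly as in Lemma~\ref{decreasing-property}: for $0 < s < \varepsilon^3/(16B)$,
\[
	\int_{U(\varepsilon,s)} \omega_{(1-\varepsilon) v}^n \leq \Bigl(1 + \tfrac{Cs}{\varepsilon^n}\Bigr) \int_{U(\varepsilon,s)} \omega_u^n.
\]

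Combining these two displays yields $(1-\varepsilon)^n \int_{U(\varepsilon,s)} \omega_u^n \leq c\, (1 + Cs/\varepsilon^n) \int_{U(\varepsilon,s)} \omega_u^n$. Dividing through by the Monge-Amp\`ere mass of $U(\varepsilon,s)$ (which is positive for small $s > 0$ by the same reasoning that underlies Lemma~\ref{decreasing-property}, since $U(\varepsilon,s)$ is non-empty by the definition of the infimum $m(\varepsilon)$), we obtain $(1-\varepsilon)^n \leq c\, (1 + Cs/\varepsilon^n)$. Letting $s \to 0^+$ and then $\varepsilon \to 0^+$ gives $1 \leq c$, contradicting $c < 1$.

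The only delicate point is the strict positivity $\int_{U(\varepsilon,s)} \omega_u^n > 0$; everything else is a direct transcription of the Hermitian modified comparison principle. This is also why the statement is a Hermitian analogue of a trivial K\"ahler fact: in the K\"ahler case one has the much shorter argument $c \int_X \omega_v^n \geq \int_X \omega_u^n = \int_X \omega^n = \int_X \omega_v^n$, whereas in the Hermitian setting the total masses of $\omega_u^n$ and $\omega_v^n$ are not comparable, forcing the localisation above.
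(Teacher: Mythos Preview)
Your proposal is correct and follows exactly the approach the paper intends: the paper itself says Corollary~\ref{const-comp} is obtained ``by examining the above proof'' (i.e.\ the proof of Lemma~\ref{decreasing-property}), and your adaptation---replacing the comparison $f\leq g$ by the pointwise measure inequality $\omega_u^n\leq c\,\omega_v^n$ inside the modified comparison principle from \cite{cuong-kolodziej13}---is precisely that. One cosmetic remark: the contradiction framing is unnecessary, since your chain of inequalities already yields $(1-\varepsilon)^n\leq c(1+Cs/\varepsilon^n)$ and hence $c\geq 1$ directly after letting $s\to 0^+$ and $\varepsilon\to 0^+$; the assumption $c<1$ is never used.
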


Before  proving the existence of a  continuous solution we first give an {\em a priori} estimate. This estimate will frequently be used in the sequel. 

\begin{proposition}
\label{sup-bound-general}
Let $(X,\omega)$ be a $n$-dimensional compact Hermitian manifold.
Let $G\in C^\infty(X)$ be the (Gauduchon) function such that  $e^G\omega^{n-1}$ is $dd^c$-closed. Let $0\leq g \in L^p(\omega)$, $p>1$, be such that $\int_X g\omega^n>0$. Let us denote 
\[
	\alpha_0 = \int_X e^G  \omega^n, \quad 
	A= \int_X g^\frac{1}{n} e^G\omega^n>0.
\]
Consider $c_g>0$ and $u\in PSH(\omega) \cap C(X)$  solving
 \begin{equation}
 \label{sbg-e0}
 	(\omega + dd^c u)^n = c_g \, g \omega^n
 \end{equation}
 (see \cite[Theorem 0.1]{cuong-kolodziej13} and \eqref{cst-a-b} in Section~\ref{Sect-1}).
Suppose that $v \in PSH(\omega) \cap L^\infty(X)$ satisfies
\[
	(\omega + dd^c v)^n = e^v g \omega^n.
\]
Then,
\[
	\log c_g \leq \sup_X v \leq C(\|g\|_p, A, X, \omega) + n \log \frac{\alpha_0}{A},
\]
where 
\[
C(\|g\|_p, A, X, \omega) = \int_X |v - \sup_X v| \frac{g^\frac{1}{n} e^G\omega^n}{A}.
\]
\end{proposition}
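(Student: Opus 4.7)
The plan is to establish the two inequalities separately. The lower bound will come from Corollary~\ref{const-comp}, while the upper bound will come from combining the mixed type inequality (Lemma~\ref{mixine}), an integration by parts against the Gauduchon form $e^G \omega^{n-1}$, and Jensen's inequality.

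For the lower bound, I would observe that dividing the two equations gives
$\omega_v^n = (e^v/c_g)\,\omega_u^n \leq (e^{\sup_X v}/c_g)\,\omega_u^n$ pointwise. Applying Corollary~\ref{const-comp} with the constant $c = e^{\sup_X v}/c_g$ immediately yields $e^{\sup_X v} \geq c_g$, i.e.\ $\sup_X v \geq \log c_g$. This is the easy direction and requires essentially no calculation.

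For the upper bound, the main idea is to test the equation against the Gauduchon form. The mixed type inequality (Lemma~\ref{mixine} with $k=1$, comparing $\omega_v$ and $\omega$) gives the pointwise bound
\[
	\omega_v \wedge \omega^{n-1} \geq (e^v g)^{1/n} \omega^n = e^{v/n} g^{1/n}\omega^n.
\]
Multiplying by $e^G$ and integrating, and using that $e^G \omega^{n-1}$ is $dd^c$-closed (so that $\int_X dd^c v \wedge e^G \omega^{n-1} = 0$ after approximating $v$ by smooth $\omega$-psh functions via standard regularization on Hermitian manifolds), one obtains
\[
	\alpha_0 = \int_X \omega \wedge e^G \omega^{n-1} = \int_X \omega_v \wedge e^G \omega^{n-1} \geq \int_X e^{v/n} g^{1/n} e^G \omega^n.
\]
Writing $v = (v - M) + M$ with $M = \sup_X v$, factoring out $e^{M/n}$, and normalising $d\mu := g^{1/n} e^G \omega^n / A$ to a probability measure, Jensen's inequality applied to the convex function $t \mapsto e^{t/n}$ gives
\[
	\int_X e^{(v-M)/n}\, d\mu \geq \exp\!\left(\tfrac{1}{n}\int_X (v-M)\, d\mu\right).
\]
Rearranging $e^{M/n} \int_X e^{(v-M)/n} g^{1/n} e^G \omega^n \leq \alpha_0$ and taking logarithms yields exactly
\[
	M \leq n \log \frac{\alpha_0}{A} + \int_X |v - M|\, \frac{g^{1/n} e^G \omega^n}{A},
\]
which is the claimed upper bound.

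The main (mild) obstacle will be rigorously justifying the integration by parts $\int_X dd^c v \wedge e^G \omega^{n-1} = 0$ when $v$ is only bounded. This requires approximating $v$ by a decreasing sequence of smooth $\omega$-psh functions (available since the equation has a continuous solution, and further smoothing can be done in local charts) and invoking the Bedford--Taylor convergence theorem, whose validity in the Hermitian setting is recorded in the references. All other steps are routine, and the dependence of $C(\|g\|_p, A, X, \omega)$ on its arguments is then recovered a posteriori from Corollary~\ref{l1-uni-bound} applied with weight $F = g^{1/n} e^G/A \in L^{p}(\omega^n)$ (note $g^{1/n} \in L^{np}(\omega^n)$).
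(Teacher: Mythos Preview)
Your proof is correct and follows essentially the same route as the paper: the upper bound via the mixed type inequality $\omega_v \wedge \omega^{n-1} \geq e^{v/n} g^{1/n}\omega^n$, integration against the Gauduchon form, and Jensen's inequality; the lower bound via $\omega_v^n \leq (e^{\sup_X v}/c_g)\,\omega_u^n$ and Corollary~\ref{const-comp}. Your extra care about the integration by parts is unnecessary here, since for bounded $v$ the identity $\int_X dd^c v \wedge e^G\omega^{n-1} = \int_X v\, dd^c(e^G\omega^{n-1}) = 0$ holds directly by the definition of $dd^c v$ as a current tested against the smooth $dd^c$-closed form $e^G\omega^{n-1}$.
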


\begin{proof}
Put $M= \sup_X v$ and $v_0 = v-M$.
By the mixed type inequality (Lemma~\ref{mixine})
\[
	\omega_v \wedge \omega^{n-1} \geq e^\frac{v}{n}g^\frac{1}{n} \omega^n.
\]
Therefore,
\[
	\omega_v\wedge e^G\omega^{n-1} 
	\geq e^\frac{v}{n}g^\frac{1}{n} e^G\omega^{n} .
\]
By the Gauduchon condition it follows that 
\begin{equation}
\label{sbg-e1}
	\int_X e^G \omega^{n} 
	\geq \int_X e^\frac{v}{n} g^\frac{1}{n}e^G\omega^{n}.
\end{equation}
The Jensen inequality gives that
\begin{equation}
\label{sbg-e2}
	\log\left(\frac{1}{A} \int_X e^\frac{v}{n} g^\frac{1}{n} e^G \omega^{n} \right) \geq  \int_X \frac{v}{n} \frac{g^\frac{1}{n}e^G \omega^{n}}{A}.
\end{equation}
Combining \eqref{sbg-e1} and \eqref{sbg-e2} and $v= v_0 + M$ we get
\[
	n \log \frac{\alpha_0}{A} \geq M + \int_X v_0 \, \frac{g^\frac{1}{n}e^G\omega^n}{A}.
\]
Hence, we get the second inequality of Proposition~\ref{sup-bound-general}. It remains to prove the first one. According
to \eqref{sbg-e0} and $v= v_0 +M$ we get that
\[
	\omega_{v}^n = e^{v_0 +M} g \omega^n \leq e^M g \omega^n
	= \frac{e^M}{c_g} \, \omega_u^n.
\]
Therefore, the first inequality follows from Corollary~\ref{const-comp}.
\end{proof}

We are in the position to prove existence of continuous solution to Monge-Amp\`ere equation with the right hand side in $L^p$, $p>1$.

\begin{proof}[Proof of Theorem~\ref{existence-ma}]
Suppose that the sequence of smooth functions 
$f_j > 0$, $j\geq 1$, converges in $L^p(\omega^n)$ to $f$ as $j \to +\infty$. By a theorem of Cherrier \cite[Th\'eor\`em~1, p.373]{cherrier87}, there exists a unique $\varphi_j \in PSH(\omega)\cap C^\infty(X)$ such that
\begin{equation*}
	(\omega + dd^c \varphi_j)^n = e^{\varphi_j} f_j \omega^n.
\end{equation*}
Let us denote 
\[
	M_j = \sup_X \varphi_j , \quad \mbox{and} \quad \psi_j = \varphi_j -M_j \leq 0.
\]
Then, the above equation can be rewritten as follows.
\begin{equation}
\label{smooth-ma}
	(\omega + dd^c \psi_j)^n = e^{\psi_j + M_j} f_j \omega^n.
\end{equation}

\begin{claim}
\label{c-uniform-bound-M_j}
	$M_j$ is uniformly bounded.
\end{claim}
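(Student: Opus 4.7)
The plan is to apply Proposition~\ref{sup-bound-general} directly with $g = f_j$ and $v = \varphi_j$, and then to verify that every quantity appearing in the resulting two-sided estimate
\[
  \log c_{f_j} \;\leq\; M_j \;\leq\; \int_X |\psi_j|\, \frac{f_j^{1/n} e^G \omega^n}{A_j} \;+\; n \log \frac{\alpha_0}{A_j},
\]
where $\psi_j := \varphi_j - M_j \leq 0$ and $A_j := \int_X f_j^{1/n} e^G \omega^n$, is controlled uniformly in $j$.

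For the lower bound it will suffice to bound $\log c_{f_j}$ from below. This is immediate from \eqref{cst-a-b}: $c_{f_j} > c_0$ with $c_0$ depending only on $\|f_j\|_p$, $X$, and $\omega$, and the $L^p$-convergence $f_j \to f$ keeps $\|f_j\|_p$ uniformly bounded.

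For the upper bound two pieces must be verified: (i) $A_j$ stays bounded below by a positive constant, and (ii) the weighted $L^1$-integral $\int_X |\psi_j|\, f_j^{1/n} e^G \omega^n$ stays bounded above, both uniformly in $j$. For (i), the pointwise inequality $|a^{1/n} - b^{1/n}| \leq |a - b|^{1/n}$ for $a,b \geq 0$ combined with H\"older's inequality yields
\[
  \|f_j^{1/n} - f^{1/n}\|_n^n \;\leq\; \int_X |f_j - f|\, \omega^n \;\leq\; C \|f_j - f\|_p \;\to\; 0,
\]
so $A_j \to A := \int_X f^{1/n} e^G \omega^n$, and $A > 0$ because $\int_X f\, \omega^n > 0$ forces $f$ to be positive on a set of positive measure. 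For (ii), I will apply Corollary~\ref{l1-uni-bound} to the densities $F_j := f_j^{1/n} e^G$: the identity $\|f_j^{1/n}\|_{np} = \|f_j\|_p^{1/n}$ shows that $F_j \in L^{np}(\omega^n)$ with uniformly bounded norm (and $np > 1$), while $\int_X F_j\, \omega^n = A_j$ stays bounded away from $0$ by step (i). Since $\sup_X \psi_j = 0$, the corollary will give $\int_X |\psi_j|\, F_j\, \omega^n \leq C$ uniformly in $j$; dividing by $A_j \geq A/2$ (for $j$ large) then yields the required bound, and the term $n \log (\alpha_0/A_j)$ is controlled in the same way.

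The step that will demand the most care is (ii): one must check that the constant supplied by Corollary~\ref{l1-uni-bound} is genuinely uniform across the family $\{F_j\}$. Tracing through its proof, the dependence is channeled through the $L^\infty$-bound on the solution $u_{F_j}$ of $(\omega + dd^c u_{F_j})^n = c_{F_j} F_j \omega^n$ furnished by \cite[Corollary 5.6, Lemma 5.9]{cuong-kolodziej13}; these bounds depend monotonically on $\|F_j\|_p$ and, implicitly, on a positive lower bound for $\int_X F_j\, \omega^n$. Both inputs are at hand from the $L^p$-convergence of $f_j$ and from $\int_X f\, \omega^n > 0$, so the uniformity propagates through to give the claim.
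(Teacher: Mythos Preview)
Your proof is correct and follows essentially the same route as the paper: both apply Proposition~\ref{sup-bound-general} with $g=f_j$, $v=\varphi_j$, use the first inequality together with \eqref{cst-a-b} for the lower bound on $M_j$, and use the second inequality together with Corollary~\ref{l1-uni-bound} and the convergence $A_j \to \int_X f^{1/n} e^G \omega^n > 0$ for the upper bound. Your write-up is in fact more explicit than the paper's in two places --- you justify the convergence $A_j \to A$ via the elementary inequality $|a^{1/n}-b^{1/n}| \leq |a-b|^{1/n}$, and you spell out why the constant in Corollary~\ref{l1-uni-bound} applied to $F_j = f_j^{1/n} e^G \in L^{np}$ is uniform in $j$ --- whereas the paper treats these points tersely (and also records an upper bound on $A_j$ that is not actually needed for the argument).
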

\begin{proof}[Proof of Claim~\ref{c-uniform-bound-M_j}] It's a consequence of
Proposition~\ref{sup-bound-general} and Corollary~\ref{l1-uni-bound}. Let us define
\[
	A_j = \int_X f_j^\frac{1}{n} e^G \omega^n, \quad
	\alpha_0 = \int_X e^G\omega^n .
\]
Applying the second inequality in Proposition~\ref{sup-bound-general} one gets  that
\begin{equation}
\label{bound-M_j}
	M_j \leq C(\|f_j\|_p, A_j, X, \omega)+ n  \log \frac{\alpha_0}{A_j}
\end{equation}
Thus, it is sufficient to show that 
\[
	\frac{1}{C}< A_j< C
\]
for some uniform constant $C>0$ independent of $j\geq 1$. Indeed,
it is clear that 
\[
	A_j \rightarrow \int_X f^\frac{1}{n} e^G\omega^n >0
\]
as $j \rightarrow +\infty$. Hence, $A_j$ is uniformly bounded from below away from $0$. 
Next, by the H\"older inequality we get that
\[
	 \int_X f_j^\frac{1}{n} e^G \omega^n \leq
	 \left( \int_X f_j e^G \omega^n \right)^\frac{1}{n} 
	 \left( \int_X e^G \omega^n\right)^\frac{n-1}{n}.
\]
Therefore, for $j$ large,
\[
	\frac{1}{2} \int_X f^\frac{1}{n} e^G \omega^n
\leq	A_j 
\leq 	\alpha^\frac{n-1}{n} \left( \int_X f_j e^G\omega^n \right)^\frac{1}{n}.
\]
We have obtained the upper bound part.  It remains to show $M_j$ is uniformly bounded  from below. We solve $u_j \in PSH(\omega) \cap C(X)$ 
\[
	(\omega + dd^c u_j)^n = c_j f_j \omega^n ,
\]
where $c_j> c_0:= c_0(\|f_j\|_p, X, \omega)>0$ (see \eqref{cst-a-b} in Corollary~\ref{l1-uni-bound}). The first inequality in Proposition~\ref{sup-bound-general} gives
\[
	\log c_0 \leq \log c_j \leq M_j.
\]
The claim is proven.
\end{proof}

Claim~\ref{c-uniform-bound-M_j} implies that
the right hand side of \eqref{smooth-ma} is uniformly bounded 
in $L^p(\omega^n)$. Since the family $\{\psi_j\}$ is compact in $L^1(\omega^n)$,
after passing to a subsequence, we may assume that it is a Cauchy sequence in 
$L^1(\omega^n)$, and suppose that $e^{M_j}$ converges to $e^M$.  
\cite[Corollary 5.10]{cuong-kolodziej13} shows that it is actually a Cauchy sequence in $C(X)$. Thus, $\psi_j$ converges uniformly to $\psi \in PSH(\omega)\cap C(X)$, $\sup_X \psi =0$. Hence $\varphi = \psi +M$ solves
\begin{equation}
	(\omega + dd^c \varphi)^n = e^{\varphi} f \omega^n. 
\end{equation}
Thus, we have finished the proof of existence.
\end{proof}

For $\lambda=0$ a smooth (or continuous) solution exists after multiplication of the right hand side by a suitable constant \cite[Corollary 1]{TW10b}, \cite[Theorem 0.1]{cuong-kolodziej13}. When $\lambda>0$ the difference is that the adjustive constant $c>0$ does not appear on the right hand side.  
Our next result shows how this constant can be computed using Theorem~\ref{existence-ma}.

\begin{corollary} 
\label{compute-constant}
Let $0\leq f \in L^p(\omega^n)$, $p>1$ be such that $\int_X f \omega^n >0$. Let $c>0$ and  $\varphi \in PSH(\omega) \cap C(X)$ solve
\begin{equation}
\label{epsilon-0-ma}
	(\omega + dd^c \varphi)^n = c f \omega^n.
\end{equation}
Let  $\varphi_\varepsilon \in PSH(\omega) \cap C(X)$, $0< \varepsilon \leq 1$, be the unique solution to
\begin{equation}
\label{epsilon-ma}
	(\omega + dd^c \varphi_\varepsilon)^n 
	= e^{\varepsilon \varphi_\varepsilon} f \omega^n.
\end{equation}
Then, for any fixed $x\in X$,
\[
	c = \lim_{\varepsilon \rightarrow 0} e^{\varepsilon \varphi_\varepsilon(x)}
	= \lim_{\varepsilon \rightarrow 0} e^{\varepsilon M_\varepsilon}
\]
where $M_\varepsilon = \sup_X\varphi_\varepsilon $.
\end{corollary}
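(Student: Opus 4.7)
The plan is to sandwich both $e^{\varepsilon \varphi_\varepsilon(x)}$ and $e^{\varepsilon M_\varepsilon}$ between two auxiliary quantities that converge to $c$. Set $\psi_\varepsilon := \varphi_\varepsilon - M_\varepsilon$, so that $\sup_X \psi_\varepsilon = 0$, and $m_\varepsilon := \inf_X \varphi_\varepsilon$. The goal will be to establish the chain
\[
e^{\varepsilon m_\varepsilon} \;\leq\; e^{\varepsilon \varphi_\varepsilon(x)} \;\leq\; e^{\varepsilon M_\varepsilon}, \qquad e^{\varepsilon m_\varepsilon} \;\leq\; c \;\leq\; e^{\varepsilon M_\varepsilon},
\]
and then show that the gap $\varepsilon(M_\varepsilon - m_\varepsilon)$ tends to $0$; together these force all three of $e^{\varepsilon M_\varepsilon}$, $e^{\varepsilon m_\varepsilon}$ and $e^{\varepsilon \varphi_\varepsilon(x)}$ to share the common limit $c$.

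First I would pin down $c$ between $e^{\varepsilon m_\varepsilon}$ and $e^{\varepsilon M_\varepsilon}$ using Corollary~\ref{const-comp}. Since $e^{\varepsilon\varphi_\varepsilon} \leq e^{\varepsilon M_\varepsilon}$, the defining equation gives
\[
(\omega + dd^c \varphi_\varepsilon)^n \;\leq\; e^{\varepsilon M_\varepsilon} f\omega^n \;=\; \frac{e^{\varepsilon M_\varepsilon}}{c}\,(\omega + dd^c \varphi)^n,
\]
so Corollary~\ref{const-comp} forces $e^{\varepsilon M_\varepsilon} \geq c$. Reading the same two equations in the opposite direction with $e^{\varepsilon \varphi_\varepsilon} \geq e^{\varepsilon m_\varepsilon}$ yields $(\omega + dd^c \varphi)^n \leq (c/e^{\varepsilon m_\varepsilon})(\omega + dd^c \varphi_\varepsilon)^n$, hence $e^{\varepsilon m_\varepsilon} \leq c$. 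This step uses no regularity.

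Next I would bound $\varepsilon M_\varepsilon$ uniformly from above, and then bootstrap to a uniform oscillation bound on $\varphi_\varepsilon$. The Gauduchon-Jensen argument in the proof of Proposition~\ref{sup-bound-general} adapts verbatim: Lemma~\ref{mixine} gives $\omega_{\varphi_\varepsilon} \wedge \omega^{n-1} \geq e^{\varepsilon\varphi_\varepsilon/n} f^{1/n}\omega^n$; multiplying by $e^G$, integrating against the Gauduchon form $e^G\omega^{n-1}$ (whose $dd^c$ vanishes), and applying Jensen to the probability measure $f^{1/n} e^G \omega^n/A$ produces
\[
\varepsilon M_\varepsilon \;\leq\; n\log(\alpha_0/A) + \frac{\varepsilon}{A} \int_X |\psi_\varepsilon|\, f^{1/n} e^G \omega^n,
\]
and Corollary~\ref{l1-uni-bound} applied to $F = f^{1/n} e^G/A$ controls the integral uniformly in $\varepsilon$, giving $\varepsilon M_\varepsilon \leq C_1$. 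Rewriting the equation in the form
\[
(\omega + dd^c \psi_\varepsilon)^n = e^{\varepsilon M_\varepsilon + \varepsilon \psi_\varepsilon} f \omega^n =: F_\varepsilon \omega^n,
\]
the density $F_\varepsilon$ is uniformly bounded in $L^p(\omega^n)$ by $e^{C_1} \|f\|_p$ (since $\psi_\varepsilon \leq 0$). Since $\psi_\varepsilon$ is a continuous $\omega$-psh solution with $\sup \psi_\varepsilon = 0$, Lemma~\ref{decreasing-property} forces the unique normalizing constant of the equation $\omega_u^n = c_\varepsilon F_\varepsilon \omega^n$ to be $c_\varepsilon \equiv 1$, and the Hermitian $L^\infty$ estimate of \cite{cuong-kolodziej13} (Corollary~5.6, already invoked in the proof of Corollary~\ref{l1-uni-bound}) delivers a uniform bound $\|\psi_\varepsilon\|_\infty \leq C_2$.

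The conclusion is then immediate: because $\sup \psi_\varepsilon = 0$, one has $M_\varepsilon - m_\varepsilon = \|\psi_\varepsilon\|_\infty \leq C_2$, so $\varepsilon(M_\varepsilon - m_\varepsilon) \to 0$ as $\varepsilon \to 0$; squeezing in the chain from the first paragraph gives $e^{\varepsilon M_\varepsilon}, e^{\varepsilon m_\varepsilon} \to c$ and hence $e^{\varepsilon \varphi_\varepsilon(x)} \to c$ for every fixed $x$. The main obstacle is the last bullet of the previous paragraph: the Corollary~\ref{const-comp} step is essentially free, but controlling $\|\varphi_\varepsilon - M_\varepsilon\|_\infty$ independently of $\varepsilon$ rests on the deep pluripotential $L^\infty$ estimate for the Monge-Amp\`ere equation on compact Hermitian manifolds from \cite{cuong-kolodziej13}, together with the checking that its hypotheses---in particular that the normalizing constant lies in a compact interval bounded away from $0$ and $\infty$---hold uniformly in $\varepsilon$.
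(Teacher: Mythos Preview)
Your argument is correct, and it is in fact a cleaner route than the one taken in the paper. Both proofs share the same analytic core: the upper bound $\varepsilon M_\varepsilon \leq C'$ from Proposition~\ref{sup-bound-general}, and the resulting uniform oscillation estimate $\|\psi_\varepsilon\|_\infty \leq H$ from \cite[Corollary~5.6]{cuong-kolodziej13}. The difference lies in how the limit is identified. The paper proceeds by compactness: it extracts an $L^1$-Cauchy subsequence of $\{\psi_\varepsilon\}$, upgrades this to uniform convergence via the stability estimate \cite[Corollary~5.10]{cuong-kolodziej13}, passes to the limit in the Monge--Amp\`ere equation to obtain $(\omega+dd^c\psi)^n = e^M f\omega^n$, and only then invokes Corollary~\ref{const-comp} (together with the original equation for $\varphi$) to conclude $e^M = c$. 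You instead apply Corollary~\ref{const-comp} twice at the outset, in both directions, to pin $c$ between $e^{\varepsilon m_\varepsilon}$ and $e^{\varepsilon M_\varepsilon}$ for every $\varepsilon$; the uniform oscillation bound then squeezes all three quantities to $c$ without any subsequence extraction, stability estimate, or weak convergence of Monge--Amp\`ere measures. Your approach buys a shorter, more elementary proof that avoids \cite[Corollary~5.10]{cuong-kolodziej13} entirely; the paper's approach additionally produces the limiting potential $\psi$ along the way, though that byproduct is not needed for the statement. One minor remark: your aside about Lemma~\ref{decreasing-property} forcing $c_\varepsilon\equiv 1$ is unnecessary, since \cite[Corollary~5.6]{cuong-kolodziej13} only requires the upper bound $\omega_{\psi_\varepsilon}^n \leq e^{C'} f\omega^n$, exactly as the paper uses it.
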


\begin{remark}
In the case $\omega$ is K\"ahler, under the necessary condition for the solution of the equation \eqref{epsilon-0-ma}
\[
	\int_X f \omega^n= \int_X \omega^n,
\]
we are able to show that 
\[
	0 \leq M_\varepsilon \leq C.
\]
Therefore, 
$
	c = \lim_{\varepsilon \rightarrow 0} e^{\varepsilon M_\varepsilon}
	=1.
$
\end{remark}

\begin{proof}[Proof of Corollary~\ref{compute-constant}]
Put $M_\varepsilon = \sup_X \varphi_\varepsilon$ and $\psi_\varepsilon = \varphi_\varepsilon - M_\varepsilon$. The equation \eqref{epsilon-ma} is now rewritten as 
\begin{equation}
\label{e2-ma}
	(\omega + dd^c \psi_\varepsilon)^n 
=	e^{\epsilon \psi_\varepsilon + \varepsilon M_\varepsilon} f \omega^n.
\end{equation}
Again by Proposition~\ref{sup-bound-general} we get
\[
	\varepsilon M_\varepsilon \leq \varepsilon C + n \log \frac{\alpha_0}{A}
	< C' ,
\]
where $\alpha_0= \int_X e^G \omega^n$, $A = \int_X f^\frac{1}{n} e^G \omega^n  >0$ and $0<\varepsilon \leq 1$. 
Then, it follows from \eqref{e2-ma} that
\[
	\omega_{\psi_\varepsilon}^n \leq e^{\varepsilon M_\varepsilon} f \omega^n
	\leq e^{C'} f \omega^n.
\]
By \cite[Corollary 5.6]{cuong-kolodziej13} there exists a uniform constant $H = H(\|f\|_{L^p(\omega^n)}, \omega) >0$   such that
\begin{equation}
\label{uniform-oscilliation}
	-H \leq \psi_\varepsilon = \varphi_\varepsilon - M_\varepsilon \leq 0.
\end{equation}
Note that, by Corollary~\ref{const-comp}, we also have 
\begin{equation}
\label{lower-bound}
	e^{\varepsilon M_\varepsilon} \geq  c.
\end{equation}
The right hand side of \eqref{e2-ma} is uniformly bounded in $L^p(\omega^n)$,
and we may suppose that $\{\psi_\varepsilon\}$ is a Cauchy sequence in $L^1(\omega^n)$.  \cite[Corollary 5.10]{cuong-kolodziej13} implies that it is actually 
a Cauchy sequence in $C(X)$. Let us denote by $\psi$ the limit point, and suppose
that $\varepsilon M_\varepsilon \rightarrow M$. Taking limits of two sides we get that
\begin{equation}
\label{cst-c}
	(\omega + dd^c \psi)^n = e^M f \omega^n.
\end{equation}
It follows from Corollary~\ref{const-comp} and equations \eqref{epsilon-0-ma},  \eqref{cst-c} that
\[
	c = e^M= \lim_{\varepsilon \rightarrow 0} e^{\varepsilon M_\varepsilon}.
\]
Moreover, this equality and \eqref{uniform-oscilliation} imply, for any fixed $x\in X$, 
\[
	 \lim_{\varepsilon \rightarrow 0} e^{\varepsilon \varphi_\varepsilon(x)} =c.
\]
Thus, the proposition follows.
\end{proof}

\bigskip

\section{Degenerate complex Monge-Amp\`ere equations}
\label{Sect-3}

Let $(X,\omega)$ be a $n$-dimensional compact Hermitian manifold.
Suppose that there is a  smooth closed semi-positive  $(1,1)$-form $\beta$ on $X$ satisfying
\begin{equation}
\label{big-form}
	\int_X \beta^n >0, \quad \mbox{normalized by } 
	\int_X \beta^n =1.
\end{equation}
Our goal is to extend the results in \cite{EGZ09, EGZ11} to compact Hermitian manifolds which {\em a priori} do not belong to the Fujiki class. On the other hand, if the conjecture of Demailly and Paun \cite[Conjecture 0.8]{DP04} holds, then our results are just the consequence of those in \cite{EGZ09, EGZ11}.  We refer the reader to the survey \cite{pss12} for the state-of-the-art of results on the complex Monge-Amp\`ere equations on K\"ahler manifolds, in which many geometric problems and motivations to study the weak solutions are discussed.

We are interested in finding weak solutions to the following
equations
\[
	(\beta + dd^c \varphi)^n = e^{\lambda \varphi} f \omega^n, \quad
	\lambda\geq 0
\]
where $0\leq f \in L^p(\omega^n)$, $p>1$ and the function $\varphi$ belongs 
to
\[
	PSH(\beta) := \{ u \in L^1(X, \bR \cup {-\infty}): \beta + dd^c u \geq 0 
	\mbox{ and } u \mbox{ is u.s.c}\}.
\]
If a function belongs to $PSH(\beta)$ then we call it  a $\beta$-psh function. We refer to \cite{GZ05, GZ07, EGZ09, EGZ11} for properties of $\beta$-psh functions.

Our first result in this section deals with  the case $\lambda =1$ or equivalently  $\lambda>0$ (up to a rescaling).

\begin{theorem}
\label{existence-dmae}
Let $0\leq f \in L^p(\omega^n)$, $p>1$, be such that 
$
	\int_X f\omega^n>0
$
Then, there exists a unique function $\varphi \in PSH(\beta) \cap C(X)$ satisfying
\[
	(\beta + dd^c \varphi)^n = e^\varphi f \omega^n
\]
in the weak sense of currents.
\end{theorem}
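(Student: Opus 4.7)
The plan is to regularize the semi-positive form $\beta$ by the Hermitian metrics $\Omega_\varepsilon := \beta + \varepsilon\omega$ for $\varepsilon \in (0,1]$, solve a family of non-degenerate Monge-Amp\`ere equations using Theorem~\ref{existence-ma}, and extract a uniform limit as $\varepsilon \to 0$. Concretely, the density $\tilde f_\varepsilon := f\,\omega^n/\Omega_\varepsilon^n$ lies in $L^p(\Omega_\varepsilon^n)$ (with norm depending on $\varepsilon$), so Theorem~\ref{existence-ma} applied with the Hermitian metric $\Omega_\varepsilon$ produces a unique continuous $\varphi_\varepsilon \in PSH(\Omega_\varepsilon) \cap C(X)$ solving
\[
(\Omega_\varepsilon + dd^c \varphi_\varepsilon)^n = e^{\varphi_\varepsilon} f\,\omega^n.
\]

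The crux is an $\varepsilon$-independent bound $\|\varphi_\varepsilon\|_\infty \leq C$. Write $\varphi_\varepsilon = M_\varepsilon + \psi_\varepsilon$ with $M_\varepsilon := \sup_X \varphi_\varepsilon$ and $\psi_\varepsilon \leq 0$. Since $\beta \leq C_0\omega$, the rescaled function $\psi_\varepsilon/(C_0+1)$ is $\omega$-plurisubharmonic with supremum $0$, so Corollary~\ref{l1-uni-bound} yields $\int_X |\psi_\varepsilon|\,f\,\omega^n \leq K$ uniformly; Jensen's inequality applied to the probability measure $f\,\omega^n/\int_X f\,\omega^n$ then gives $\int_X e^{\psi_\varepsilon} f\,\omega^n \geq c_0 > 0$ uniformly. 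The next (and key) ingredient is the uniform oscillation bound $\|\psi_\varepsilon\|_\infty \leq C$: a Ko\l odziej-type iteration on the sublevel sets $\{\psi_\varepsilon < -t\}$, with capacity measured against $\Omega_\varepsilon$ and compared to $cap_\beta$ via the uniform volume-capacity inequality of Proposition~\ref{vol-cap-ine}, combined with the small-curvature mass estimate of Proposition~\ref{estimate-mass} (whose error term $\varepsilon B(1+\|\psi_\varepsilon\|_\infty)^n$ is absorbed inductively) and the energy comparison Proposition~\ref{in-e1-class}, produces an oscillation bound that does not deteriorate as $\varepsilon \to 0$. With $\|\psi_\varepsilon\|_\infty$ controlled, integrating the equation gives
\[
e^{M_\varepsilon} c_0 \leq e^{M_\varepsilon}\int_X e^{\psi_\varepsilon} f\,\omega^n = \int_X (\Omega_\varepsilon + dd^c \varphi_\varepsilon)^n \leq \int_X \Omega_\varepsilon^n + o(1),
\]
which bounds $M_\varepsilon$ from above, while a matching lower bound for $M_\varepsilon$ comes from Corollary~\ref{const-comp} by comparison with the solution of $(\Omega_\varepsilon + dd^c u)^n = c_\varepsilon f\omega^n$ supplied by Theorem~\ref{existence-ma} (whose constant $c_\varepsilon$ is uniformly bounded below).

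Once the uniform $L^\infty$ bound is secured, the family $\{\psi_\varepsilon\}$ is precompact in $L^1(\omega^n)$, and the stability estimate of \cite{cuong-kolodziej13} (together with the uniform bound on $M_\varepsilon$) upgrades $L^1$-convergence along a subsequence to uniform convergence, with limit $\psi \in PSH(\beta) \cap C(X)$ and $M_\varepsilon \to M$. Passing to the distributional limit in the equation yields $(\beta + dd^c \varphi)^n = e^\varphi f\,\omega^n$ for $\varphi := \psi + M$. For uniqueness, the strict monotonicity of the right-hand side in $\varphi$ enables the contradiction scheme of Lemma~\ref{decreasing-property} once a comparison principle is available for $\beta$-plurisubharmonic functions; in the degenerate setting this is obtained via the viscosity approach of Eyssidieux--Guedj--Zeriahi~\cite{EGZ11}, as flagged in the remark following Corollary~\ref{S0-cor2}.

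The main obstacle is the uniform oscillation bound on $\psi_\varepsilon$. Standard Ko\l odziej iteration is sensitive to the curvature constants of the background metric, and here $\Omega_\varepsilon$ degenerates to the merely semi-positive $\beta$. The combination of Proposition~\ref{vol-cap-ine} (uniform in $\varepsilon$ because it is stated for $\beta$ itself) with the small-curvature mass and energy inequalities from Section~\ref{Sect-1}, which were designed precisely so that their error terms are proportional to $B_{\Omega_\varepsilon} = \varepsilon B$, is what allows the iteration constants to be controlled independently of $\varepsilon$.
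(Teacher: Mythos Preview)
Your overall strategy (approximate $\beta$ by the Hermitian metrics $\Omega_\varepsilon=\beta+\varepsilon\omega$, solve via Theorem~\ref{existence-ma}, and pass to a limit) is exactly the paper's. The gap is in the step you flag as ``the main obstacle'': the claimed \emph{uniform} oscillation bound $\|\psi_\varepsilon\|_\infty\le C$ is not supported by the machinery you invoke, and the paper does \emph{not} obtain it. The Ko\l odziej-type iteration from \cite{cuong-kolodziej13} rests on the modified comparison principle, whose constants depend on the curvature constant of the background metric \emph{relative to itself}, i.e.\ on a bound of the form $2n\,dd^c\Omega_\varepsilon\le B_\varepsilon\,\Omega_\varepsilon^2$. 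Since $dd^c\Omega_\varepsilon=\varepsilon\,dd^c\omega$ and the only available lower bound is $\Omega_\varepsilon\ge\varepsilon\omega$, one is forced to take $B_\varepsilon=B/\varepsilon$, which blows up; this is precisely \eqref{epsilon-curva} in the paper. The small constant $B_{\Omega_\varepsilon}=\varepsilon B$ you quote is the curvature constant \emph{relative to $\omega$} (equation \eqref{curvature-Omega}), which feeds Propositions~\ref{estimate-mass} and~\ref{in-e1-class} but is not the one governing the comparison principle. Consequently the iteration yields only the logarithmic bound $\|\psi_\varepsilon\|_\infty\le C(\log\tfrac{1}{\varepsilon})^n$ of Lemma~\ref{growth-phi}, not a uniform one. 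Your attempt to ``absorb inductively'' the error terms of Proposition~\ref{estimate-mass} does not touch this issue, and invoking $cap_\beta$ via Proposition~\ref{vol-cap-ine} only helps on the volume side of the iteration, not on the comparison-principle side. A second, related problem: the stability estimate of \cite{cuong-kolodziej13} you use to upgrade $L^1$-convergence to uniform convergence is stated for a \emph{fixed} Hermitian metric, whereas here the reference metric $\Omega_\varepsilon$ varies and degenerates.

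The paper's route around this is substantially different: accept the logarithmic bound, use it together with Propositions~\ref{estimate-mass} and~\ref{in-e1-class} (this is where the small constant $\varepsilon B$ genuinely helps, since $\varepsilon(\log\tfrac{1}{\varepsilon})^k\to 0$) to show that the decreasing limit $\varphi$ lies in the finite-energy class $\mathcal{E}^1(X,\beta)$ (Theorem~\ref{existence-dmae-1}), and only then upgrade $\varphi$ to bounded via the Ko\l odziej/EGZ argument applied directly with the closed form $\beta$, and to continuous via the viscosity method of \cite{EGZ11}. Uniqueness, incidentally, does not require viscosity: since $\beta$ is closed, the classical comparison principle holds for bounded $\beta$-psh functions, and one can run the stability argument of \cite{kolodziej03} as in \cite{dinew-zhang10} (Remark~\ref{uniqueness-big-case}).
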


Then, using this theorem we get the following result which includes Corollary~\ref{S0-cor2}.

\begin{theorem}
\label{existence-dmae-2}
Let $0\leq f \in L^p(\omega^n)$, $p>1$, be such that 
$
	\int_X f\omega^n>0.
$
Let us denote  by $\varphi_\varepsilon \in PSH(\beta) \cap C(X)$, $0<\varepsilon \leq 1$,  the unique continuous solution to the equation
\begin{equation}
\label{dmae-2-e1}
	(\beta + dd^c \varphi_\varepsilon)^n 
	= e^{\varepsilon \varphi_\varepsilon} f \omega^n.
\end{equation}
Then, for any fixed $x\in X$, 
\begin{equation}
\label{dmae-2-e2}
	e^{\varepsilon \varphi_\varepsilon(x)} \to 
	c := \frac{\int_X \beta^n}{\int_X f \omega^n}
	\quad \mbox{as} \quad \varepsilon \to 0.
\end{equation}
Moreover, as $\varepsilon \to 0$, $\varphi_\varepsilon - \sup_X \varphi_\varepsilon$ converges uniformly
to the unique continuous $\varphi \in PSH(\beta)$, $\sup_X \varphi =0$, satisfying
the equation
\begin{equation*}
\label{dmae-2-e3}
	(\beta + dd^c \varphi)^n = c \, f \omega^n
\end{equation*}
in the weak sense of currents.
\end{theorem}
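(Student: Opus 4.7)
The plan is to normalize by the supremum: set $M_\varepsilon := \sup_X \varphi_\varepsilon$ and $\psi_\varepsilon := \varphi_\varepsilon - M_\varepsilon \leq 0$, so that \eqref{dmae-2-e1} becomes
\begin{equation*}
(\beta + dd^c \psi_\varepsilon)^n = e^{\varepsilon M_\varepsilon}\, e^{\varepsilon \psi_\varepsilon} f \omega^n.
\end{equation*}
Since $\beta$ is smooth and closed, Stokes' theorem gives $\int_X (\beta + dd^c \psi_\varepsilon)^n = \int_X \beta^n$, so integration of both sides produces the identity
\begin{equation*}
e^{\varepsilon M_\varepsilon} \int_X e^{\varepsilon \psi_\varepsilon} f \omega^n = \int_X \beta^n,
\end{equation*}
in which the constant $c := \int_X \beta^n / \int_X f\omega^n$ is naturally encoded.

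First I would show $\varepsilon M_\varepsilon$ is uniformly bounded. The lower bound $e^{\varepsilon M_\varepsilon} \geq c$ is immediate from $\psi_\varepsilon \leq 0$. For the upper bound, Jensen's inequality applied to the probability measure $f\omega^n/\int_X f\omega^n$ yields
\begin{equation*}
\int_X e^{\varepsilon \psi_\varepsilon} f\omega^n \geq \left(\int_X f\omega^n\right) \exp\left(\frac{\varepsilon \int_X \psi_\varepsilon f\omega^n}{\int_X f\omega^n}\right),
\end{equation*}
so it suffices to control $\int_X |\psi_\varepsilon| f\omega^n$. Since $\beta \leq \Lambda \omega$ for some fixed $\Lambda > 0$, the function $\psi_\varepsilon/\Lambda$ lies in $PSH(\omega)$ with zero supremum, and Corollary~\ref{l1-uni-bound} provides the required uniform bound. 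Hence $\varepsilon M_\varepsilon \leq C$, and consequently $e^{\varepsilon \varphi_\varepsilon} f$ is uniformly bounded in $L^p(\omega^n)$. Applying the Ko\l odziej-type $L^\infty$ estimate for $\beta$-Monge-Amp\`ere equations with $L^p$ right-hand side (the analog of \cite[Corollary 5.6]{cuong-kolodziej13} in the $\beta$-setting, based on the volume-capacity inequality of Proposition~\ref{vol-cap-ine}) then gives $\|\psi_\varepsilon\|_\infty \leq H$ independently of $\varepsilon$. Thus $e^{\varepsilon \psi_\varepsilon} \to 1$ uniformly on $X$ as $\varepsilon \to 0$, and the integral identity forces $e^{\varepsilon M_\varepsilon} \to c$; combining these gives $e^{\varepsilon \varphi_\varepsilon(x)} = e^{\varepsilon \psi_\varepsilon(x)}\, e^{\varepsilon M_\varepsilon} \to c$ for every $x$, which is \eqref{dmae-2-e2}.

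It remains to promote the family $\{\psi_\varepsilon\}$ to uniform convergence towards the unique continuous $\varphi \in PSH(\beta)$ with $\sup_X \varphi = 0$ solving $(\beta + dd^c \varphi)^n = cf\omega^n$. By the uniform $L^\infty$ bound above and standard compactness for $\beta$-psh functions, any sequence $\varepsilon_k \to 0$ admits an $L^1$-subsequential limit $\psi \in PSH(\beta) \cap L^\infty(X)$. Since $e^{\varepsilon_k M_{\varepsilon_k}} e^{\varepsilon_k \psi_{\varepsilon_k}} \to c$ uniformly, taking weak limits in the equation (using the uniform $L^\infty$ bound together with Bedford-Taylor convergence) shows that $\psi$ solves $(\beta + dd^c \psi)^n = cf\omega^n$ with $\sup_X \psi = 0$. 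The crux is then to show that such a bounded solution is unique and that the convergence actually holds in $C(X)$ rather than merely $L^1(\omega^n)$; this is the main obstacle. I would address it by invoking the Hermitian stability estimate underlying Theorem~\ref{existence-dmae} (the analog of \cite[Corollary 5.10]{cuong-kolodziej13} adapted to the degenerate form $\beta$, which rests on the full pluripotential machinery assembled in Section~\ref{Sect-1}: the CLN inequalities, the Cauchy-Schwarz inequality of Proposition~\ref{C-S-H}, and the volume-capacity inequality of Proposition~\ref{vol-cap-ine}). Uniqueness of $\psi$ then pins the limit down independently of the subsequence, so the entire family $\varphi_\varepsilon - M_\varepsilon$ converges uniformly to $\varphi := \psi$, completing the proof.
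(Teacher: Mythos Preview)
Your proposal follows essentially the same route as the paper: normalize by $M_\varepsilon=\sup_X\varphi_\varepsilon$, use Stokes plus Jensen plus Corollary~\ref{l1-uni-bound} to bound $\varepsilon M_\varepsilon$ from above, then the volume--capacity inequality (Proposition~\ref{vol-cap-ine}) together with the Ko\l odziej-type estimate to get $\|\psi_\varepsilon\|_\infty\le H$, and finally a stability estimate plus uniqueness to pass to the limit. Your observation that the uniform bound $\|\psi_\varepsilon\|_\infty\le H$ gives $e^{\varepsilon\psi_\varepsilon}\to 1$ uniformly, hence $e^{\varepsilon M_\varepsilon}\to c$ directly from the integral identity, is in fact slightly cleaner than the paper's argument, which extracts $e^M$ only along a subsequence and then appeals to uniqueness of $c$.

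Two points deserve correction. First, in the last paragraph you pass to the weak limit in the Monge--Amp\`ere equation using only $L^1$ convergence of $\psi_{\varepsilon_k}$ together with a uniform $L^\infty$ bound; Bedford--Taylor convergence does \emph{not} follow from this in general. The paper avoids the issue by invoking the stability estimate \emph{first}: from \cite[Proposition~3.3]{EGZ09} one has $\|\psi_\varepsilon-\psi_\delta\|_\infty\le C\|\psi_\varepsilon-\psi_\delta\|_1^\gamma$, so an $L^1$-Cauchy subsequence is automatically Cauchy in $C(X)$, and only then does one pass to the limit. You have the right tool, but the order of your argument should be reversed. Second, since $\beta$ is \emph{closed}, the stability estimate and the $L^\infty$ bound you need are exactly those of \cite{EGZ09,EGZ11} (and \cite{dinew-zhang10} for uniqueness, cf.\ Remark~\ref{uniqueness-big-case}); there is no need to adapt the Hermitian results of \cite{cuong-kolodziej13} to the degenerate $\beta$-setting here.
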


\begin{remark}
\label{uniqueness-big-case}
The uniqueness  in Theorems~\ref{existence-dmae} and ~\ref{existence-dmae-2} (continuous or bounded solutions) is well-known. In fact, once we have the volume-capacity inequality (Proposition~\ref{vol-cap-ine}) it can be obtained by adapting the stability estimate of Ko\l odziej \cite{kolodziej03, kolodziej05} to this setting as it was done by Dinew and Zhang \cite[Corollary 1.2]{dinew-zhang10}. One special feature of those equations is that we do have the classical comparison principle for $\beta$-psh functions and the invariance of volume of their Monge-Amp\`ere measures, i.e.  $\int_X \beta_u^n = \int_X \beta^n$ for all $u \in PSH(\beta)\cap L^\infty(X)$. 
\end{remark}

\begin{remark}
\label{method-dmae} If $X$ possesses a K\"ahler metric or it  is in the  Fujiki class, then two theorems above are due to Eyssidieux-Guedj-Zeriahi \cite{EGZ09, EGZ11} who have generalised Ko\l odziej's results \cite{kolodziej98, kolodziej03} to the degenerate left hand side.
In the course of the proof of the theorems we follow the approach outlined in  
\cite{EGZ09, EGZ11}. The main difference is that while on K\"ahler manifolds one can choose a sequence of K\"ahler metrics which tend to the degenerate metric
$\beta$, it is no longer the case on compact Hermitian manifolds. We have to approximate 
the degenerate metric $\beta \geq 0$ by a sequence of Hermitian metrics, such as 
$\{\beta + \varepsilon \omega\}_{\varepsilon>0}$. In this case we have to deal with the torsion of $\omega$. The new tools are the recent results \cite{cuong-kolodziej13} which will help us carrying out the arguments. We shall see that the {\em modified} comparison principle \cite[Theorem 0.2]{cuong-kolodziej13} appears at almost every step.
\end{remark}

First we will prove Theorem~\ref{existence-dmae-2} assuming 
Theorem~\ref{existence-dmae}. The proof will make use of the stability estimate
of $L^\infty-L^r$, $r\geq 1$, of solutions  \cite[Theorem 4.5]{kolodziej03}, \cite[Proposition 3.3]{EGZ09} provided the right hand sides are in $L^p(\omega^n)$, $p>1$.

\begin{proof}[Proof of Theorem~\ref{existence-dmae-2}]
Let us denote $M_\varepsilon:= \sup_X \varphi_\varepsilon$ and $\psi_\varepsilon:= \varphi_\varepsilon - M_\varepsilon$. The equation \eqref{dmae-2-e1} is then  rewritten as
\begin{equation}
\label{dmae-2-e50}
	(\beta + dd^c \psi_\varepsilon)^n = e^{\varepsilon \psi_\varepsilon + \varepsilon M_\varepsilon} f \omega^n.
\end{equation}
Put $\tilde A := \int_X f\omega^n>0$, and recall that we have normalised $\int_X \beta^n =1$. Therefore, by the Stokes theorem
\[
	1 = \int_X (\beta + dd^c \psi_\varepsilon)^n
	   = \int_X e^{\varepsilon \psi_\varepsilon + \varepsilon M_\varepsilon} f \omega^n.
\]
An application of the Jensen inequality gives 
\begin{equation}
\label{dmae-2-e5}
	\log \frac{1}{\tilde A} \geq \int_X (\varepsilon \psi_\varepsilon + \varepsilon M_\varepsilon) \frac{f \omega^n}{\tilde A}
=	\varepsilon M_\varepsilon + \int_X \varepsilon \psi_\varepsilon \frac{f \omega^n}{\tilde A}.
\end{equation}
As $\beta \leq C \omega$ for some $C>0$ and $\sup_X \psi_\varepsilon =0$, it follows from Corollary~\ref{l1-uni-bound} that there exists $C= C(\tilde A, \|f\|_p, X, \omega)>0$ such that
\[
	\int_X \psi_\varepsilon \frac{f \omega^n}{\tilde A} \geq - C.
\]
Therefore, the inequality \eqref{dmae-2-e5} implies that
\[
	\varepsilon M_\varepsilon 
	\leq \varepsilon C + \log\frac{1}{\tilde A} 
	\leq C'
\]
as $0< \varepsilon \leq 1$. This inequality tells us that 
\[
	(\beta + dd^c \psi_\varepsilon)^n \leq e^{C'} f\omega^n.
\]
By the volume-capacity inequality (Proposition~\ref{vol-cap-ine}) and \cite[Theorem 2.1]{EGZ09}, after a simple application of H\"older inequality, we get that 
\begin{equation}
\label{dmae-2-e60}
	- H \leq \psi_\varepsilon \leq 0
\end{equation}
where $H$ depends only on $X, \omega, \beta, \|f\|_p$. Therefore, the right hand side of the equation \eqref{dmae-2-e50} is uniformly bounded in $L^p$, $p>1$.
It follows the proof of  \cite[Proposition 3.3]{EGZ09} or \cite[Corollary 3.4]{EGZ11} that 
\begin{equation}
\label{dmae-2-e6}
	\|\psi_\varepsilon - \psi_\delta\|_\infty \leq C \|\psi_\varepsilon - \psi_\delta\|_1^\gamma
\end{equation}
where $0< \gamma< \gamma_{max}= \gamma_{max}(n,p)$ (explicit formula is given in \cite{EGZ09, EGZ11}). Since $\{\psi_\varepsilon\}_{0< \varepsilon \leq 1}$ is compact in $L^1(\omega^n)$, there exists a subsequence $\{\psi_{\varepsilon_j}\}$, $\varepsilon_j \searrow 0$ as $j \to + \infty$, which   is Cauchy in $L^1(\omega^n)$. By \eqref{dmae-2-e6} this is also a Cauchy sequence in $C(X)$. Therefore, it converges to $\psi\in PSH(\beta) \cap C(X)$ with $\sup_X \psi =0$. We may suppose that $\varepsilon_j M_j$ converges to $M$ as $j \to + \infty$. Thus, by the Bedford-Taylor convergence theorem we have
\[
	(\beta + dd^c \psi)^n = e^M f \omega^n
\]
as $\varepsilon_j \psi_{\varepsilon_j}(x) \to 0$ for every $x\in X$ (see \eqref{dmae-2-e60}). 
It is obvious that 
\[
	e^M = \frac{\int_X \beta^n}{\int_X f\omega^n}:= c \quad 
	\mbox{is uniquely defined.}
\]
Furthermore, there is at most one $\varphi\in PSH(\beta)\cap C(X)$ with $\sup_X\varphi =0$ solving the equation 
\[
	(\beta + dd^c\varphi)^n = c f \omega^n.
\]
It implies that actually
$e^{\varepsilon \varphi_\varepsilon(x)} \to c$, and $\psi_\varepsilon$ converges uniformly to $\psi\equiv \varphi$ as $\varepsilon \to 0$.
\end{proof}

The hard part is to prove the existence of solution in Theorem~\ref{existence-dmae}. We follow the ideas in \cite{EGZ09, EGZ11}.  At the first step we show that the equation admits a solution in the finite self energy class, i.e. the class $\cE^1(X, \beta)$ (see below for the definition). Once the first step is done then the second step is showing that this solution is bounded. It follows from the works of Ko\l odziej \cite{kolodziej98, kolodziej05} and Eyssidieux-Guedj-Zeriahi \cite{EGZ09}. The last step is to show the solution is continuous. It is an immediate consequence of the viscosity approach due to Eyssidieux-Guedj-Zeriahi \cite{EGZ11}. Since the second and last step are well-known, we will not reproduce them here.  We refer the reader to  \cite{kolodziej05, EGZ09, EGZ11} for the details. Thus, we  only focus on the proof of the first step.

Following \cite{GZ07, EGZ09} we say that a $\beta$-psh function $u$ belongs to $\cE^1(X, \beta)$  if there exists a sequence $u_j \in 	 PSH(\beta) \cap L^\infty(X)$ satisfying
\[
	u_j \searrow u \quad \mbox{ and } \quad
	\quad \sup_j \int_X |u_j| (\beta + dd^c u_j)^n < + \infty.
\]
We are in a  position to state our main result in this section.

\begin{theorem}
\label{existence-dmae-1}
Let $\beta$ be a smooth closed semi-positive $(1,1)$-form such that
$
	\int_X \beta^n =1.
$
Let $0\leq f \in L^p(\omega^n)$, $p>1$, be such that 
$
	\int_X f\omega^n>0.
$
Then, there exists a 
function $\varphi \in \cE^1(X,\beta)$ satisfying
\[
	(\beta + dd^c \varphi)^n = e^\varphi f \omega^n
\]
in the weak sense of currents.
\end{theorem}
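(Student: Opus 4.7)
The plan is to approximate $\beta$ by the Hermitian metrics $\beta_\varepsilon := \beta + \varepsilon\omega$ for $0<\varepsilon\leq 1$, solve the perturbed Monge--Amp\`ere equation by Theorem~\ref{existence-ma}, and extract a limit in $\cE^1(X,\beta)$. This parallels the scheme of Eyssidieux--Guedj--Zeriahi \cite{EGZ09} in the K\"ahler setting; the key new inputs are the Hermitian pluripotential estimates of Section~\ref{Sect-1}, which become applicable precisely because the curvature constant of $\beta_\varepsilon$ equals $B_{\beta_\varepsilon} = \varepsilon B$, making the torsion-induced error terms vanish as $\varepsilon\to 0$.

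First, Theorem~\ref{existence-ma} applied to $(X,\beta_\varepsilon)$ yields a unique $\varphi_\varepsilon \in PSH(\beta_\varepsilon)\cap C(X)$ solving $(\beta_\varepsilon + dd^c\varphi_\varepsilon)^n = e^{\varphi_\varepsilon} f\omega^n$. A uniform upper bound $M_\varepsilon := \sup_X \varphi_\varepsilon \leq C$ is then derived by adapting Proposition~\ref{sup-bound-general} to $\beta_\varepsilon$ (with its own Gauduchon function in place of $G$), making essential use of the total-mass estimate of Proposition~\ref{estimate-mass}, which gives $\int_X (\beta_\varepsilon + dd^c\varphi_\varepsilon)^n = \int_X \beta_\varepsilon^n + O(\varepsilon) \to 1$, and of the $L^1$-bound of Corollary~\ref{l1-uni-bound}.

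The crucial estimate is a uniform bound on the Monge--Amp\`ere energy
\[
\int_X (-\psi_\varepsilon)(\beta_\varepsilon + dd^c\psi_\varepsilon)^n \leq C, \qquad \psi_\varepsilon := \varphi_\varepsilon - M_\varepsilon - 1.
\]
To obtain it one solves an auxiliary $u_\varepsilon \in PSH(\beta_\varepsilon)\cap C(X)$ with $\sup_X u_\varepsilon = 0$ and $(\beta_\varepsilon + dd^c u_\varepsilon)^n = c_\varepsilon f\omega^n$ from \cite[Theorem~0.1]{cuong-kolodziej13}; the uniform oscillation bound $-H \leq u_\varepsilon \leq 0$ from \cite[Corollary~5.6]{cuong-kolodziej13}, combined with Proposition~\ref{in-e1-class} applied to a translate of $u_\varepsilon$ and $\psi_\varepsilon$, then yields the desired bound. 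The ``K\"ahler'' term on the right-hand side is dominated by $2^n (H+1)\, c_\varepsilon \int_X f\omega^n$, while the torsion error carries the absorbing factor $B_{\beta_\varepsilon} = \varepsilon B$; this is where the smallness of the curvature of $\beta_\varepsilon$ is used decisively, with a preliminary Hermitian $L^\infty$-control of $\psi_\varepsilon$ (growing strictly slower than $\varepsilon^{-1/n}$, e.g.\ from Ko\l odziej-type estimates of \cite{cuong-kolodziej13}) used to absorb the residual $\|\psi_\varepsilon\|_\infty$-factors.

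Finally, by Hartogs compactness one extracts a subsequence $\varphi_{\varepsilon_k} \to \varphi := (\limsup_k \varphi_{\varepsilon_k})^*$ in $L^1(\omega^n)$, with $\varphi \in PSH(\beta)$; the uniform energy bound places $\varphi$ in $\cE^1(X,\beta)$ by standard semicontinuity of energy \cite{GZ07}. The right-hand sides $e^{\varphi_{\varepsilon_k}} f$ are dominated uniformly in $L^p(\omega^n)$ by $e^C f$ and converge a.e.\ to $e^\varphi f$; on the left one truncates $\varphi_\varepsilon^j := \max\{\varphi_\varepsilon, -j\}$ and invokes continuity of the Monge--Amp\`ere operator along monotone sequences in $\cE^1$, letting $\varepsilon\to 0$ first and then $j\to\infty$. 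The main obstacle throughout is precisely this last passage: each integration by parts generates torsion terms from the non-closedness of $\beta_\varepsilon$ that must be controlled simultaneously via Propositions~\ref{C-S-H}, \ref{estimate-mass} and \ref{in-e1-class}, and whose vanishing in the limit $\varepsilon\to 0$ is exactly what makes the Hermitian approximation scheme work.
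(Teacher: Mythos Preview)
Your overall strategy---perturb to $\beta_\varepsilon=\beta+\varepsilon\omega$, solve via Theorem~\ref{existence-ma}, and pass to the limit---matches the paper, and the upper bound $M_\varepsilon\leq C$ via Proposition~\ref{sup-bound-general} and Corollary~\ref{l1-uni-bound} is correct. But two genuine gaps remain.

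First, your route to the energy bound does not work. The claim that the auxiliary $u_\varepsilon$ enjoys a \emph{uniform} oscillation bound $-H\leq u_\varepsilon\leq 0$ is unavailable: the estimate in \cite[Corollary~5.6]{cuong-kolodziej13} depends on the curvature constant of the reference metric, which for $\beta_\varepsilon$ in its own scale is $B/\varepsilon$ (see \eqref{epsilon-curva}); the actual growth, computed in Lemma~\ref{growth-phi}, is $\|\psi_\varepsilon\|_\infty\leq C(\log\tfrac{1}{\varepsilon})^n$. More seriously, to apply Proposition~\ref{in-e1-class} with $v=\psi_\varepsilon$ and $u$ a translate of $u_\varepsilon$ you need $u\leq\psi_\varepsilon$ on all of $X$, and no comparison principle furnishes this---the two functions solve Monge--Amp\`ere equations with unrelated right-hand sides. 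The paper bypasses all of this with a one-line observation you missed: since $(\beta_\varepsilon+dd^c\varphi_\varepsilon)^n=e^{\varphi_\varepsilon}f\omega^n$, one writes
\[
\int_X|\varphi_\varepsilon|\,(\beta_\varepsilon+dd^c\varphi_\varepsilon)^n
=\int_X|\psi_\varepsilon+M_\varepsilon|\,e^{\psi_\varepsilon+M_\varepsilon}f\omega^n
\leq \int_X(-\psi_\varepsilon)e^{\psi_\varepsilon+M}f\omega^n+Me^M\int_Xf\omega^n,
\]
and the elementary bound $\max_{s\leq 0}(-s)e^s=1/e$ finishes it. Proposition~\ref{in-e1-class} \emph{is} used by the paper, but at a different step: to bound the energy of the truncations $\varphi_{\varepsilon,j}=\max\{\varphi_\varepsilon,-j\}$ by the (already controlled) energy of $\varphi_\varepsilon$, where the required ordering $\varphi_\varepsilon\leq\varphi_{\varepsilon,j}$ is trivially true.

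Second, you overlook the key structural fact that by Lemma~\ref{decreasing-property-2} the family $\varphi_\varepsilon$ is \emph{decreasing} as $\varepsilon\searrow 0$. This produces a well-defined pointwise limit $\varphi$ of the full family (not merely along a subsequence), makes $\varphi_{\varepsilon,j}\searrow\varphi_j$ monotonically for each fixed $j$ so that the Bedford--Taylor convergence theorem applies directly, and underlies the lower bound on $M_\varepsilon$ (hence $\varphi\not\equiv-\infty$) via Proposition~\ref{estimate-mass} combined with Lemma~\ref{growth-phi}. Your Hartogs extraction followed by an appeal to ``standard semicontinuity of energy'' from \cite{GZ07} would have to cope with energies taken against \emph{varying} reference forms $\beta_\varepsilon$, which is not standard and is precisely what the paper's monotonicity-plus-truncation argument is designed to handle.
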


The outline of the proof is as follows. We approximate the degenerate form $\beta$ by a family of Hermitian metrics $\beta + \varepsilon \omega$, $0<\varepsilon \leq 1$. For each $0< \varepsilon \leq1$, by Theorem~\ref{existence-ma}, there is a unique
$\varphi_\varepsilon \in PSH(\beta + \varepsilon \omega) \cap C(X)$ such that
\begin{equation}
\label{dcmae}
	(\beta + \varepsilon \omega + dd^c \varphi_\varepsilon)^n 
	= e^{\varphi_\varepsilon} f \omega^n.
\end{equation}
Our goal is to show $\{\varphi_\varepsilon\}$ decreases to a solution $\varphi\in \cE^1(X, \beta)$.


The next result shows why the sequence $\{\varphi_\varepsilon\}$ is decreasing 
as $\varepsilon \searrow 0$.
\begin{lemma}
\label{decreasing-property-2} 
Let $\omega, \tilde \omega$ be two Hermitian metrics on $X$ such that $\tilde\omega \leq  \omega$.  Let $\mu$ be a positive Radon measure.
Suppose that $u \in PSH(\omega) \cap L^\infty(X)$ and 
$v \in PSH(\tilde \omega) \cap L^\infty(X)$ satisfy
\begin{equation}
\label{dcpr-1}
	(\omega + dd^c u)^n  = e^u \mu, \quad
	(\tilde \omega + dd^c v)^n = e^v \mu.
\end{equation}
Then, $v \leq u$.
\end{lemma}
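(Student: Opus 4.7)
The plan is to argue by contradiction, mimicking the proof of Lemma~\ref{decreasing-property} but now dealing with two distinct background Hermitian forms. The crucial structural input is that $\tilde\omega \leq \omega$, which gives two things at once: first, every $\tilde\omega$-psh function (in particular $v$ and $(1-\varepsilon)v$ for any $0<\varepsilon<1$) is $\omega$-psh; second,
\[
	\omega + dd^c((1-\varepsilon)v) \;\geq\; \omega - (1-\varepsilon)\tilde\omega \;\geq\; \varepsilon\omega,
\]
which is exactly the ``strict $\omega$-positivity'' hypothesis needed to invoke the modified comparison principle of \cite[Theorem 0.2]{cuong-kolodziej13}.

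Suppose for contradiction that $\{v > u\}$ is nonempty, set $m := \inf_X(u-v) < 0$ and $m(\varepsilon) := \inf_X[u - (1-\varepsilon)v]$, which satisfies $|m(\varepsilon) - m| \leq \varepsilon\|v\|_\infty$. Introduce $U(\varepsilon, s) := \{u < (1-\varepsilon)v + m(\varepsilon) + s\}$, the sublevel set playing the same role as in Section~\ref{Sect-2}. On $U(\varepsilon, s)$ the inequality unpacks to $v > u - m - 2\varepsilon\|v\|_\infty - s$, whence $e^v\mu \geq e^{-m - 2\varepsilon\|v\|_\infty - s}\,e^u\mu$. Furthermore, from $\omega \geq (1-\varepsilon)\tilde\omega$ as non-negative $(1,1)$-forms I would deduce pointwise
\[
	(\omega + dd^c((1-\varepsilon)v))^n \;\geq\; ((1-\varepsilon)\tilde\omega + dd^c((1-\varepsilon)v))^n \;=\; (1-\varepsilon)^n e^v \mu,
\]
using the second equation in \eqref{dcpr-1}.

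Then I would apply the modified comparison principle to the pair $u$ and $(1-\varepsilon)v$ — both $\omega$-psh, with $(1-\varepsilon)v$ satisfying the strict positivity displayed above — to obtain, for $0 < s < \varepsilon^3/(16B)$,
\[
	(1-\varepsilon)^n \int_{U(\varepsilon, s)} e^v\mu \;\leq\; \Bigl(1 + \frac{Cs}{\varepsilon^n}\Bigr) \int_{U(\varepsilon, s)} e^u \mu.
\]
Inserting the pointwise bound for $e^v$ and dividing through by $\int_{U(\varepsilon, s)} e^u \mu$ — which is strictly positive because the left-hand side of the above display is bounded below by $\varepsilon^n\int_{U(\varepsilon, s)} \omega^n > 0$ — yields
\[
	(1-\varepsilon)^n e^{-m - 2\varepsilon\|v\|_\infty - s} \;\leq\; 1 + \frac{Cs}{\varepsilon^n}.
\]
Since $m < 0$, I can fix $\varepsilon$ small enough so that $(1-\varepsilon)^n e^{-m/2 - 2\varepsilon\|v\|_\infty} > 1 + b$ for some $b = b(m,\varepsilon) > 0$; then taking $s \to 0^+$ with $s < \min\{\varepsilon^3/(16B),\, -m/2\}$ forces $1 + b \leq 1$, a contradiction, so $v \leq u$ everywhere.

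The main obstacle is verifying that the modified comparison principle genuinely applies in this cross-metric setting: the hypotheses of \cite[Theorem 0.2]{cuong-kolodziej13} are formulated for two functions in $PSH(\omega)\cap L^\infty(X)$, with strict positivity of the second, so once I have recorded that $(1-\varepsilon)v \in PSH(\omega)\cap L^\infty(X)$ with $\omega_{(1-\varepsilon)v} \geq \varepsilon\omega$, the cited theorem can be invoked verbatim. The rest of the argument is then bookkeeping that mirrors Lemma~\ref{decreasing-property}, with the extra factor $(1-\varepsilon)^n$ on the left coming from rescaling $\tilde\omega$-psh functions and the extra $\varepsilon\|v\|_\infty$ errors absorbed by choosing $\varepsilon$ sufficiently small.
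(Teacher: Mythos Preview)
Your argument is correct and essentially identical to the paper's: same contradiction setup, same use of $\tilde\omega\leq\omega$ to place $(1-\varepsilon)v$ in $PSH(\omega)$ with $\omega_{(1-\varepsilon)v}\geq\varepsilon\omega$, same invocation of the modified comparison principle, and the same endgame. One small slip: the left-hand side of your displayed inequality, namely $(1-\varepsilon)^n\int_{U(\varepsilon,s)}e^v\mu$, is not what is bounded below by $\varepsilon^n\int_{U(\varepsilon,s)}\omega^n$; rather, the intermediate quantity $\int_{U(\varepsilon,s)}(\omega+dd^c((1-\varepsilon)v))^n$ is, and it is that quantity which the comparison principle bounds above by $(1+Cs/\varepsilon^n)\int_{U(\varepsilon,s)}e^u\mu$, yielding the positivity you need.
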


The proof is similar to the one of 
Lemma~\ref{decreasing-property}. The difference now is the variation of metrics on the left hand 
side. We include its proof here for the sake of completeness.

\begin{proof}
We argue by contradiction. Assume that $\{v > u\}$
is nonempty. Then,
$m= \inf_X (u - v) <0$. Fix an $0< \varepsilon <<1$ to be determined later, 
and we denote $m(\varepsilon) = \inf_X [u - (1- \varepsilon)v]$. It is clear that
\[
	m - \varepsilon \| v \|_\infty \leq m(\varepsilon) \leq 
	m + \varepsilon \| v \|_\infty. 
\]
Let us denote 
\[
	U_\varepsilon(s) = \{ u < (1- \varepsilon) v + m(\varepsilon) +s \}.
\]
Note that $v \in PSH(\omega)$ as $\tilde \omega \leq \omega$.
Applying the modified comparison principle from \cite{cuong-kolodziej13} we get that, 
for $0< s < \varepsilon_0:= \frac{\varepsilon^3}{16B}$,
\begin{equation}
\label{dcpr-e2}
	\int_{U_\varepsilon(s)}
	(\omega + dd^c (1- \varepsilon)v)^n 
\leq 	[1 + \frac{C s}{\varepsilon^n}] \int_{U_\varepsilon(s)} 
	(\omega + dd^cu)^n .
\end{equation}
Since on $\{ u < (1-\varepsilon)v + m(\varepsilon) +s \}$
\[
	(\omega + dd^c (1-\varepsilon)v)^n 
\geq		(1-\varepsilon)^n e^v \mu
\geq		(1-\varepsilon)^n e^{u-m -2 \varepsilon \| v \|_\infty -s} \mu, 
\]
it follows that
\[
	(1-\varepsilon)^n e^{-m- 2 \varepsilon \|v\|_\infty - s}
	\int_{U_\varepsilon(s)}  e^u \mu
\leq	[1 + \frac{C s}{\varepsilon^n}] \int_{U(s)} e^u \mu.
\]
Note that by  \eqref{dcpr-e2} for every $0< s < \varepsilon_0$
\[
	\int_{U_\varepsilon(s)} e^u \mu
=	\int_{U_\varepsilon(s)} (\omega +dd^cu)^n
	>0.
\]
Since $m <0$, we may choose $0< \varepsilon$ so small that 
\[
	(1-\varepsilon)^n e^{-\frac{m}{2} - 2\varepsilon \|v\|_\infty} > 1 + b
\]
for some $0< b = b(m, \varepsilon)$.  Thus, we get for every $0< s <\min\{ \varepsilon_0,  - \frac{m}{2}\}$ that
\[
	0< b \leq \frac{C s}{\varepsilon^n}.
\]
It is impossible when $s>0$ is small enough. Thus, the proof follows.
\end{proof}

We are ready to prove our theorem.

\begin{proof}[Proof of Theorem~\ref{existence-dmae-1}] Take $\{\varphi_\varepsilon\}$ to be the solutions from \eqref{dcmae}. 
We are going to show that 
\begin{equation}
\label{dmae-e2}
	\varphi_\varepsilon \searrow \varphi \in PSH(\beta) \quad \mbox{ as }\varepsilon \searrow 0
\end{equation}
and
\begin{equation}
\label{dmae-e3}
	\sup_{0<\varepsilon \leq 1} \int_X |\varphi_\varepsilon| 
	(\beta + \varepsilon \omega + dd^c \varphi_\varepsilon)^n < +\infty.
\end{equation}
Let us start with the first property \eqref{dmae-e2}. Applying Lemma~\ref{decreasing-property-2} to $\beta + \varepsilon \omega$ and $\beta+ \varepsilon' \omega$, $0< \varepsilon<\varepsilon'$, we get that the
sequence $\varphi_\varepsilon$ is decreasing as $\varepsilon \searrow 0$. Hence, we can put
\[
	\varphi := \lim_{\varepsilon \rightarrow 0} \varphi_\varepsilon. 
\]
To verify that $\varphi \in PSH(\beta)$, it is enough to show that the sequence $\varphi_\varepsilon$ does not
decrease identically to $- \infty$. More precisely, we will prove that  
\begin{equation}
\label{not-infty}
	\int_X e^{\varphi_\varepsilon} f \omega^n \geq \frac{1}{2}\int_X \beta^n >0 
\end{equation}
for $\varepsilon>0$ small enough. 
In fact, set $M_\varepsilon =\sup_X \varphi_\varepsilon$, and $\psi_\varepsilon = \varphi_\varepsilon - M_\varepsilon$. Then, the equation \eqref{dcmae} reads 
\begin{equation}
\label{ma-e-1}
	(\beta + \varepsilon \omega + dd^c \psi_\varepsilon)^n
=	e^{\psi_\varepsilon + M_\varepsilon} f \omega^n.
\end{equation}

The following estimate  is the most crucial step where the results of \cite{cuong-kolodziej13} play an important role.
\begin{lemma}
\label{growth-phi} 
For $0< \varepsilon \leq 1$
\[
	 \|\psi_\varepsilon \|_\infty
	\leq C \left(\log \frac{1}{\varepsilon}\right)^n
\]
where $C>0$ is a uniform constant. In particular, for each positive integer $k$, 
\[
	\lim_{\varepsilon \rightarrow 0} \varepsilon \|\psi_\varepsilon \|_\infty^k
	=0.
\]
\end{lemma}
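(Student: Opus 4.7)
The plan is to bootstrap between a control on $M_\varepsilon = \sup_X \varphi_\varepsilon$ and an oscillation control on $\psi_\varepsilon$, using the pluripotential estimates of Section~\ref{Sect-1} together with a Ko\l odziej--EGZ iteration on the big class $\{\beta\}$. Write $\Omega_\varepsilon := \beta + \varepsilon\omega$; since $\beta$ is closed, $\Omega_\varepsilon$ satisfies \eqref{curvature-Omega} with $B_{\Omega_\varepsilon} \leq \varepsilon B$, and $\int_X \Omega_\varepsilon^n = 1 + O(\varepsilon)$ by Stokes.

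First I would show that $M_\varepsilon \leq C_0 + \log\bigl[1 + \varepsilon(1+\|\psi_\varepsilon\|_\infty)^n\bigr]$. Integrating \eqref{ma-e-1} and applying Proposition~\ref{estimate-mass} yields
\[
\int_X e^{\psi_\varepsilon+M_\varepsilon} f\omega^n \leq 1 + O(\varepsilon) + \varepsilon C(1+\|\psi_\varepsilon\|_\infty)^n.
\]
The upper bound on $M_\varepsilon$ then follows from Jensen's inequality on the probability measure $f\omega^n/\int_X f\omega^n$, combined with the CLN-type estimate $\int_X |\psi_\varepsilon| f\omega^n \leq C$ (Corollary~\ref{l1-uni-bound}, applicable because $\Omega_\varepsilon \leq C_0\omega$). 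Provisionally assuming $\varepsilon(1+\|\psi_\varepsilon\|_\infty)^n = O(1)$, one gets $M_\varepsilon \leq C$ and consequently $\omega_{\psi_\varepsilon}^n \leq C' f\omega^n$ with a uniform $C'$.

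To control $\|\psi_\varepsilon\|_\infty = -\inf_X\psi_\varepsilon$ I would run a De Giorgi iteration on $\phi(s) := cap_\beta(\{\psi_\varepsilon < -s\})^{1/n}$. For a test $\rho \in PSH(\beta)$ with $0 \leq \rho \leq 1$ and $0 < t < 1$, note that $\Omega_\varepsilon + dd^c(t\rho) \geq t(\beta + dd^c\rho)$, so $(\Omega_\varepsilon + dd^c(t\rho))^n \geq t^n(\beta + dd^c\rho)^n$. Applying the modified comparison principle \cite[Theorem 0.2]{cuong-kolodziej13} with respect to $\Omega_\varepsilon$ to $\psi_\varepsilon$ versus $t\rho - (s+t)$ on the sub-level set $\{\psi_\varepsilon < -s-t\} \subseteq \{\psi_\varepsilon < t\rho - (s+t)\} \subseteq \{\psi_\varepsilon < -s\}$, and taking the supremum over admissible $\rho$, one extracts a recursive estimate
\[
t^n \phi(s+t)^n \leq C_1 \int_{\{\psi_\varepsilon<-s\}} f\omega^n + \eta(\varepsilon,s,t,\|\psi_\varepsilon\|_\infty).
\]
The H\"older inequality together with the volume--capacity estimate of Proposition~\ref{vol-cap-ine} bounds the first term by $C_2 \exp\bigl(-a(p-1)/[np\,\phi(s)]\bigr)$. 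The remainder $\eta$ encodes both the $Cs/t^n$ factor of the modified comparison principle and the torsion contribution coming from $d\Omega_\varepsilon = \varepsilon d\omega$; using Proposition~\ref{estimate-mass} and the Cauchy--Schwarz estimate of Lemma~\ref{C-S-H-2} to bound the mixed-mass integrals, one checks $\eta$ is polynomial in $\|\psi_\varepsilon\|_\infty$ with an overall factor of $\varepsilon$.

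The principal obstacle, and the source of the $(\log(1/\varepsilon))^n$ rate, is the handling of this error $\eta$: in the K\"ahler case ($\varepsilon = 0$) the standard iteration drives $\phi$ to zero in finitely many steps and produces a uniform oscillation bound, but here $\eta$ prevents $\phi$ from reaching zero, and the iteration terminates only when $\phi(s)$ has dropped below a threshold of order $1/\log(1/\varepsilon)$. Inverting the double-exponential decay of $\phi$ then converts this into $\|\psi_\varepsilon\|_\infty \leq C(\log(1/\varepsilon))^n$. Substituting this back into the bound for $M_\varepsilon$ confirms $\varepsilon(1+\|\psi_\varepsilon\|_\infty)^n \to 0$, which closes the bootstrap; the concluding assertion $\varepsilon\|\psi_\varepsilon\|_\infty^k \to 0$ for any fixed positive integer $k$ is then immediate.
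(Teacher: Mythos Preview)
Your proposal has a genuine gap in how you bound $M_\varepsilon$. You obtain $M_\varepsilon \leq C_0 + \log[1 + \varepsilon(1+\|\psi_\varepsilon\|_\infty)^n]$ via Proposition~\ref{estimate-mass} and then ``provisionally assume'' $\varepsilon(1+\|\psi_\varepsilon\|_\infty)^n = O(1)$ in order to proceed; but this is precisely the conclusion you are after, and your De Giorgi iteration also carries an error $\eta$ that you say is polynomial in $\|\psi_\varepsilon\|_\infty$. Thus both the $L^p$ control on the right-hand side of \eqref{ma-e-1} and the stopping threshold of your iteration depend on the very quantity you are trying to estimate, and you give no mechanism (a continuity argument, an a priori starting point, or similar) to close this loop rigorously.

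The paper avoids the circularity entirely. It bounds $M_\varepsilon$ \emph{unconditionally} via Proposition~\ref{sup-bound-general} (the Gauduchon trick): integrating $(\Omega_\varepsilon + dd^c\varphi_\varepsilon)\wedge e^G\omega^{n-1}$ and using that $e^G\omega^{n-1}$ is $dd^c$-closed kills the $dd^c\varphi_\varepsilon$ contribution outright, after which Jensen together with Corollary~\ref{l1-uni-bound} give $M_\varepsilon \leq C$ with no reference to $\|\psi_\varepsilon\|_\infty$. With that in hand the right-hand side of \eqref{ma-e-1} is uniformly in $L^p$, and the paper simply invokes the a priori estimate of \cite[Corollary~5.6]{cuong-kolodziej13} for the metric $\beta_\varepsilon$. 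The $(\log(1/\varepsilon))^n$ rate does \emph{not} come from an $\varepsilon$-small additive error in a recursive inequality as you describe; it comes from the fact that the curvature constant of $\beta_\varepsilon$ \emph{relative to itself} is $B/\varepsilon$ (this is \eqref{epsilon-curva}, which is the constant that actually enters the comparison principle, not the $\varepsilon B$ of \eqref{curvature-Omega} that you quote). That forces the free parameter $s_0$ in the Ko\l odziej-type estimate into the range $0<s_0\leq \varepsilon/(384B)$, and inverting the admissible function $h(s)=Ce^{as}$ at such an $s_0$ is exactly what produces $\|\psi_\varepsilon\|_\infty \leq C(\log(1/\varepsilon))^n$.
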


\begin{proof}[Proof of Lemma~\ref{growth-phi}] 
In order to use the a priori estimate in \cite{cuong-kolodziej13} one needs to know that
$(\beta + \varepsilon \omega + dd^c \psi_\varepsilon)^n$ is well dominated by the capacity $cap_{\beta + \varepsilon \omega}$.

\begin{claim}
\label{sup-bound}
	$M_\varepsilon$  is uniformly bounded from above. 
\end{claim}

\begin{proof}[Proof of Claim~\ref{sup-bound}] It follows from Proposition~\ref{sup-bound-general} with an observation that the reference metric $\beta+\varepsilon \omega$, $0< \varepsilon \leq 1$, is dominated by  $C \omega$ for some $C>0$. Therefore,
\[
	0<	\alpha_0 (\varepsilon) 
	:= \int_X (\beta + \varepsilon \omega) \wedge e^G\omega^{n-1}
	< C \alpha_0 ,
\]
a uniform constant. Another constant is  
\[
	A:=\int_X f^\frac{1}{n} e^G \omega^n>0 \quad
	\mbox{as } \int_X f\omega^n>0.
\]
Since $0 < \beta + \varepsilon \omega \leq C \omega$, we have 
$\psi_\varepsilon \in PSH(C \omega)$. As $\sup_X \psi_\varepsilon =0$, 
it follows from Corollary~\ref{l1-uni-bound} that
\begin{equation}
\label{l1-bound}
	\int_X |\psi_\varepsilon| f^\frac{1}{n} e^G \omega^n 
\leq  \int_X |\psi_\varepsilon| f^\frac{1}{n} e^G[C\omega]^n 
< 	C'
\end{equation}
with a uniform constant $C'>0$. Proposition~\ref{sup-bound-general} and \eqref{l1-bound} give that
\begin{equation}
\label{sup-boud-e1}
	M_\varepsilon 
\leq 	C' + n \log \frac{\alpha_0(\varepsilon)}{A} < C' + n \log\frac{C\alpha_0}{A}.
\end{equation}
The claim follows.
\end{proof}

We proceed to finish the proof of Lemma~\ref{growth-phi}.
By Claim~\ref{sup-bound} and the equation \eqref{ma-e-1} we get that,
for a uniform constant $C>0$,
\begin{equation}
\label{dominated-measure}
	(\beta + \varepsilon \omega + dd^c \psi_\varepsilon)^n
=	e^{\psi_\varepsilon + M_\varepsilon} f \omega^n
\leq 		e^{C} f \omega^n.
\end{equation}
Here, by the volume-capacity inequality (Proposition~\ref{vol-cap-ine})  and  by  
$
	cap_\beta(E) \leq cap_{\beta + \varepsilon \omega}(E),
$
we get, after using the H\"older inequality,  that for any Borel set $E \subset X$,
\begin{equation}
\label{set-dominated-measure}
	\int_E f \omega^n \leq C \|f\|_p \exp\left(- \frac{a}{cap_\beta^\frac{1}{n}(E)}\right)
\leq 	C' \exp\left(- \frac{a}{ cap_{\beta + \varepsilon \omega}^\frac{1}{n}(E)}\right)
\end{equation}
where $a = a(X, \omega, \beta, p)>0$ and $C'>0$ depends only on $X, \omega, \|f\|_p$.
Let us use the notation 
\[
	\beta_\varepsilon: = \beta + \varepsilon \omega \quad \mbox{and} \quad
	N(\beta_\varepsilon)
:=	\sup\left\{\int_X|\rho|\, \beta_\varepsilon^n: 
	\rho \in PSH(\beta_\varepsilon) \mbox{ and } \sup_X \rho = 0\right\}.
\]
It follows from \eqref{dominated-measure} and \eqref{set-dominated-measure} that
\begin{equation}
\label{ma-exponential-decay}
	(\beta_\varepsilon + dd^c \psi_\varepsilon)^n \leq 
	C' \exp\left(- \frac{a}{cap_{\beta_\varepsilon}^\frac{1}{n}(E)}\right).
\end{equation}
Since $\beta$ is closed and $\beta\geq 0$, the reference Hermitian metric $\beta_\varepsilon$ satisfies
\begin{equation}
\label{epsilon-curva}
\begin{aligned}
	- \frac{B}{\varepsilon} \, (\beta_\varepsilon)^2 \leq 
	2n \,dd^c \beta_\varepsilon \leq \frac{B}{\varepsilon} \, (\beta_\varepsilon)^2, 
\\
 	- \frac{B}{\varepsilon} \, (\beta_\varepsilon)^3 \leq 
	4n^2 \,d \beta_\varepsilon \wedge d^c \beta_\varepsilon \leq \frac{B}{\varepsilon} \, (\beta_\varepsilon)^3.
\end{aligned}
\end{equation}
Thus, the "curvature" constant for $\beta_\varepsilon$ is $B_\varepsilon = \frac{B}{\varepsilon}$.

We wish to apply the {\em a priori} estimate from the proof of \cite[Corollary 5.6]{cuong-kolodziej13} to the Hermitian metric $\beta_\varepsilon$. Because of $B_\varepsilon = \frac{B}{\varepsilon}$ the crucial change is the range of the considered sublevel sets which are close to the minimum of $\psi_\varepsilon$. The value $s>0$ there now can be taken 
\[
	0< s \leq \frac{1}{3}\min\left\{\frac{1}{2^n}, \frac{1}{2^3} \frac{1}{16 B_\varepsilon}\right\} = \frac{\varepsilon}{384 B}
\]
for $\varepsilon>0$ small enough. 

By \eqref{ma-exponential-decay} the Monge-Amp\`ere measure $(\beta_\varepsilon + dd^c \psi_\varepsilon)^n$ satisfies the inequality $(5.2)$ in \cite{cuong-kolodziej13} for the {\em admissible} function
\[
	h(s) = C e^{as} \quad \mbox{ for } \quad a, C>0 \quad \mbox{ independent of } \varepsilon. 
\]
Therefore, the formula in the proof of \cite[Corollary 5.6]{cuong-kolodziej13} gives 
\begin{equation}
\label{growth-e1}
	\| \psi_\varepsilon\|_\infty \leq s_0 + \frac{C N(\beta_\varepsilon)}{\hbar(s_0)}, 
\end{equation}
where we choose 
\begin{equation}
\label{growth-e1-1}
	s_0 = \frac{\varepsilon}{384 B} 
	\geq \varepsilon^{2}
\end{equation}
(for sufficiently small $\varepsilon>0$) and $\hbar (s)$ is the inverse function of 
\[
	\kappa \left ( s^{-n} \right) 		4\, C_n  \left \{
			\frac{1}{ \left [ h ( s )\right]^{\frac{1}{n}} } 
			+ \int_{s}^\infty  \frac{dx}{x \left[ h (x) \right]^{\frac{1}{n}}}		
					\right \}.
\]
The next step is to estimate the right hand side
of \eqref{growth-e1} from above.
By computation we get that
\[
	\kappa(x) \leq C \exp (- \tilde a x^{-\frac{1}{n}})
\]
where $C, \tilde a>0$ are uniform constants. As $\kappa$ is an increasing function, its inverse function satisfies
\begin{equation}
\label{growth-e2}
	\hbar(x) \geq \left (  \frac{1}{\tilde a} \log \frac{C}{x}\right)^{-n}.
\end{equation}
Since $\sup_X \rho =0$ and $\rho \in PSH(C \omega)$, Corollary~\ref{l1-uni-bound}
gives
\[
	\int_X|\rho| (\beta + \varepsilon \omega)^n
\leq  \int_X |\rho| [C\omega]^n < C'.
\]
Therefore,
\begin{equation}
\label{growth-e3}
	N(\beta_\varepsilon) < C.
\end{equation}
Plugging  $s_0 = \varepsilon^{2}$ into the right hand side of \eqref{growth-e1}, then  using \eqref{growth-e2} and \eqref{growth-e3}, we get that
\[
	\| \psi_\varepsilon\|_{L^\infty} \leq
	\varepsilon^{2} + C \left( \frac{1}{\tilde a} \log C - \frac{2 \log \varepsilon}{\tilde a}\right)^n \leq 
	\frac{C}{\tilde a^n} (-\log \varepsilon)^n
\]
for $\varepsilon>0$ small. Thus, Lemma~\ref{growth-phi} follows.
\end{proof}

We are in the position to prove \eqref{not-infty}. Put $\Omega= \beta+ \varepsilon \omega$, then it's easy to see that 
\[
	dd^c \Omega = \varepsilon dd^c \omega, \quad
	d \Omega \wedge d^c \Omega	= \varepsilon^2 d\omega \wedge d^c \omega.
\]
Hence, one can take $B_\Omega=\varepsilon B$ to be the corresponding constant in Proposition~\ref{estimate-mass} for the metric $\Omega$, i.e.
\begin{equation}
\label{B-Omega}
	-\varepsilon B \omega^2 \leq 2n dd^c \Omega \leq \varepsilon B\omega^2, \quad -\varepsilon B \omega^3 \leq 4n^2 d\Omega \wedge d^c \Omega \leq \varepsilon B\omega^3.
\end{equation}
Therefore, 
\begin{equation}
\label{bound-mass-ep}
\begin{aligned}
	\int_X (\beta + \varepsilon \omega)^n -  \varepsilon B \|\psi_\varepsilon\|_{L^\infty}^n C
&\leq 	\int_X (\beta + \varepsilon \omega + dd^c \psi_\varepsilon)^n  \\
&\leq		\int_X (\beta+ \varepsilon \omega)^n +  \varepsilon B\|\psi_\varepsilon\|_{L^\infty}^n C.
\end{aligned}
\end{equation}
Combining Lemma~\ref{growth-phi}  and \eqref{bound-mass-ep} we get the property \eqref{not-infty}.  

Next, we shall see how to get \eqref{dmae-e3}, i.e.
\begin{equation}
\label{sup-mass-e1}
	\sup_{0< \varepsilon \leq 1}
	 \int_X |\varphi_\varepsilon| (\beta + \varepsilon \omega +dd^c \varphi_\varepsilon)^n 
	 < +\infty.
\end{equation}

\begin{claim}
\label{inf-bound-M}
	$M_\varepsilon$ is uniformly bounded from below.
\end{claim}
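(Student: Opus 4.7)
The plan is to combine the lower bound on the total Monge--Amp\`ere mass already obtained in \eqref{not-infty} with the trivial upper bound coming from $\varphi_\varepsilon \leq M_\varepsilon$. Integrating \eqref{dcmae} over $X$ gives
\[
\int_X (\beta + \varepsilon\omega + dd^c\varphi_\varepsilon)^n \;=\; \int_X e^{\varphi_\varepsilon} f\omega^n \;\leq\; e^{M_\varepsilon} \int_X f\omega^n,
\]
so any uniform positive lower bound on the left-hand side transfers directly to a uniform lower bound on $M_\varepsilon$.

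For the small-$\varepsilon$ regime, the bound \eqref{not-infty} just established (resting on Proposition~\ref{estimate-mass} and the logarithmic growth of Lemma~\ref{growth-phi}) supplies precisely this control: there exists $\varepsilon_0>0$ such that for every $\varepsilon \in (0,\varepsilon_0]$,
\[
\int_X (\beta + \varepsilon\omega + dd^c\varphi_\varepsilon)^n \;\geq\; \tfrac{1}{2}\int_X \beta^n \;=\; \tfrac{1}{2}.
\]
Rearranging yields $M_\varepsilon \geq -\log\bigl(2\int_X f\omega^n\bigr)$ throughout $(0,\varepsilon_0]$.

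For $\varepsilon$ in the complementary compact range $[\varepsilon_0,1]$, the form $\beta_\varepsilon := \beta + \varepsilon\omega$ is already a Hermitian metric with uniformly bounded geometry (sitting between $\varepsilon_0\omega$ and $(C+1)\omega$). Solving the companion equation $(\beta_\varepsilon + dd^c u_\varepsilon)^n = c_\varepsilon f\omega^n$ via \cite[Theorem~0.1]{cuong-kolodziej13}, the estimate \eqref{cst-a-b} depends continuously on the background metric and hence $c_\varepsilon \geq c^* > 0$ uniformly on $[\varepsilon_0,1]$. Since
\[
(\beta_\varepsilon + dd^c\varphi_\varepsilon)^n \;\leq\; e^{M_\varepsilon} f\omega^n \;=\; \frac{e^{M_\varepsilon}}{c_\varepsilon}\,(\beta_\varepsilon + dd^c u_\varepsilon)^n,
\]
Corollary~\ref{const-comp} applied with the Hermitian metric $\beta_\varepsilon$ forces $e^{M_\varepsilon}/c_\varepsilon \geq 1$, hence $M_\varepsilon \geq \log c^* > -\infty$, closing the argument.

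The real difficulty has already been surmounted in Lemma~\ref{growth-phi}: once one knows $\|\psi_\varepsilon\|_\infty = O((\log 1/\varepsilon)^n)$, the curvature error term $\varepsilon B(1+\|\psi_\varepsilon\|_\infty)^n C$ in Proposition~\ref{estimate-mass} is $o(1)$ as $\varepsilon \to 0$ and secures \eqref{not-infty}; the present claim is then a one-line rearrangement of the integrated equation.
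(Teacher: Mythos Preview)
Your argument is correct and follows essentially the same route as the paper: integrate \eqref{dcmae}, bound $e^{\varphi_\varepsilon}\leq e^{M_\varepsilon}$, and use the total-mass lower bound coming from Proposition~\ref{estimate-mass} together with Lemma~\ref{growth-phi} (which is exactly \eqref{not-infty}). The paper simply re-invokes \eqref{bound-mass-ep} directly rather than citing \eqref{not-infty}, and leaves the compact range $[\varepsilon_0,1]$ implicit (``the claim easily follows''), whereas you spell it out via Corollary~\ref{const-comp}; this is a cosmetic difference, not a substantive one.
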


\begin{proof}[Proof of Claim~\ref{inf-bound-M}]
We use again the equation \eqref{ma-e-1}. It gives us
\[
		(\beta + \varepsilon \omega + dd^c \psi_\varepsilon)^n
=	e^{\psi_\varepsilon + M_\varepsilon} f \omega^n
\leq	e^{M_\varepsilon} f \omega^n.
\]
Integrating two sides 
\[
	\int_X (\beta + \varepsilon \omega + dd^c \psi_\varepsilon)^n
\leq	e^{M_\varepsilon}\int_X f\omega^n.
\]
Combining this, the first inequality of \eqref{bound-mass-ep} and Lemma~\ref{growth-phi}, 
we get
\[
	e^{M_\varepsilon} \int_X f\omega^n
\geq 	\int_X (\beta + \varepsilon \omega)^n - \varepsilon (- \log \varepsilon)^{n^2} C
\geq	\frac{1}{2} \int_X \beta^n >0
\]
for $\varepsilon$ sufficiently small.
Thus, the claim easily follows.
\end{proof}

We proceed to prove \eqref{sup-mass-e1}. By Claims~\ref{sup-bound} and ~\ref{inf-bound-M} there exists a uniform number $M \geq 1$ such that
\[
	- M \leq M_\varepsilon \leq M.
\]
Since $\varphi_\varepsilon = \psi_\varepsilon + M_\varepsilon$, we have 
\begin{align*}
	\int_X |\varphi_\varepsilon| (\beta + \varepsilon \omega + dd^c \varphi_\varepsilon)^n 
&=	\int_X |\psi_\varepsilon + M_\varepsilon| e^{\psi_\varepsilon + M_\varepsilon} f\omega^n 	\\
&\leq 	\int_X (-\psi_\varepsilon) e^{\psi_\varepsilon +M}f \omega^n +  \int_X M e^Mf \omega^n.
\end{align*}
As $\max_{s\leq 0} (-s e^s) = 1/e$, we get \eqref{sup-mass-e1}.  Thus, the second property of the sequence $\varphi_\varepsilon$ is obtained.

We are in a position to show that $\varphi \in \cE^1(X, \beta)$. In other words,
\[
	\int_X |\varphi|(\beta + dd^c \varphi)^n < + \infty
\]
in the sense that there exists a sequence $\varphi_j \downarrow \varphi$, 
$\varphi_j \in PSH(\beta) \cap L^\infty(X)$, satisfying
\begin{equation}
\label{E-1}
	\sup_j \int_X |\varphi_j| (\beta+ dd^c \varphi_j)^n < + \infty.
\end{equation}
By Claims~\ref{sup-bound} and ~\ref{inf-bound-M} we may assume, without lost of generality,  that 
\[
	\varphi_\varepsilon \leq -1
\] 
for every $0< \varepsilon \leq 1$. The same is true for  $\varphi$.
The construction of the sequence $\varphi_j$ is as follows. Let us denote for $j \geq 1$,
\[
	\varphi_j = \max\{ \varphi, - j\}.
\]
As $\beta \geq 0$, $\varphi_j \in PSH(\beta)$ for $j\geq 1$. It's obvious that 
$\varphi_j \searrow \varphi$. We shall check \eqref{E-1}. Let us denote
\[
\varphi_{\varepsilon, j} = \max\{\varphi_\varepsilon , - j\}.
\]
Observe that, if $j$ is fixed, then $\varphi_{\varepsilon, j}$ is uniformly bounded
and it decreases to $\varphi_j \in PSH(\beta) \cap L^\infty(X)$ as $\varepsilon \searrow 0$. Hence, by the Bedford-Taylor convergence theorem \cite{BT82} we have 
\[
	(\beta + \varepsilon \omega + dd^c \varphi_{\varepsilon,j})^n 
\rightarrow (\beta + dd^c \varphi_j)^n \quad \mbox{ weakly \; as } \varepsilon \rightarrow 0.
\]
We will verify that for a fixed $j$,
\begin{equation}
\label{e1-1}
	 \int_X (- \varphi_{\varepsilon, j}) (\beta + \varepsilon \omega + dd^c 
	\varphi_{\epsilon, j})^n < C,
\end{equation}
where $C>0$ is a uniform constant independent of $j, \varepsilon$. 
It follows from \eqref{B-Omega} that $B_\Omega= \varepsilon B$ is the constant in \eqref{curvature-Omega} for the metric
$
	\Omega = \beta + \varepsilon \omega.
$
Hence, Proposition~\ref{in-e1-class} applied to 
$
	u = \varphi_\varepsilon  \leq v= \varphi_{\varepsilon,j} \leq -1
$
implies that
\begin{align*}
	\int_X (- \varphi_{\varepsilon,j}) (\beta+ \varepsilon \omega + dd^c \varphi_{\varepsilon,j})^n 
&\leq		2^n\int_X (-\varphi_\varepsilon) (\beta + \varepsilon \omega + dd^c \varphi_\varepsilon)^n +	\\
&\quad 	+ \varepsilon B  (-\log \varepsilon)^{4n^2} C,
\end{align*}
where the last term on the right hand side comes from Lemma~\ref{growth-phi}:
\[
	|\varphi_{\varepsilon,j}| \leq |\varphi_\varepsilon| \leq |\psi_\varepsilon| + M
	\leq C(-\log \varepsilon)^n.
\]
Therefore, for $0< \varepsilon \leq 1$,
\[
\int_X (- \varphi_{\varepsilon,j}) (\beta+ \varepsilon \omega + dd^c \varphi_{\varepsilon,j})^n
\leq		2^n \int_X (-\varphi_\varepsilon) (\beta + \varepsilon \omega + dd^c \varphi_\varepsilon)^n	+	C.
\]
By \eqref{sup-mass-e1} the right hand side is under control.
Therefore, we get \eqref{e1-1}.

We are ready to justify \eqref{E-1}, it is a classical argument. Since $\varphi_{\varepsilon,j}$
is u.s.c, any limit point $\nu$ of $\{ -\varphi_{\varepsilon, j} (\beta +\varepsilon \omega + dd^c \varphi_{\varepsilon,j})^n\}_{\varepsilon>0}$ as $\varepsilon \searrow 0$ satisfies 
\[
	0	\leq  (-\varphi_j) (\beta + dd^c \varphi_j)^n \leq \nu.
\]
It follows that
\[
	0\leq \int_X (-\varphi_j) (\beta + dd^c \varphi_j)^n
\leq		\liminf_{\varepsilon \rightarrow 0} 
		\int_X (-\varphi_{\varepsilon,j}) (\beta_\varepsilon + dd^c \varphi_{\varepsilon,j})^n < C < + \infty
\]
where the last inequality is by \eqref{e1-1}. Thus, we proved \eqref{E-1}, i.e.  
$\varphi \in \cE^1(X,\beta)$.

\medskip
{\em End of the proof of Theorem~\ref{existence-dmae-1}.} It remains to show that $\varphi$ satisfies the Monge-Amp\`ere equation. Since $M \geq \varphi_\varepsilon \searrow \varphi$ as $\varepsilon \searrow 0$,  it follows from the  Lebesgue dominated convergence  theorem
that
\[
	e^{\varphi_\varepsilon} f \omega^n \rightarrow e^\varphi f \omega^n.
\]
By  \eqref{dcmae} we will finish if we can show
\begin{equation}
\label{weak-convergence}
	(\beta + \varepsilon \omega + dd^c \varphi_\varepsilon)^n
	\rightarrow (\beta  + dd^c \varphi)^n \quad \mbox{ as } \varepsilon \rightarrow 0.
\end{equation}
Indeed, this follows from the fact that $(\beta + \varepsilon \omega + dd^c \varphi_{\varepsilon, j})^n
\rightarrow (\beta+ dd^c \varphi_j)^n$ as $\varepsilon \rightarrow0$ for any fixed $j$,
and
\[
	\int_{\{\varphi_\varepsilon \leq - j\}} 
	(\beta + \varepsilon \omega + dd^c \varphi_\varepsilon)^n 
\leq 	\frac{1}{j} \int_X |\varphi_\varepsilon| 
	(\beta + \varepsilon \omega + dd^c \varphi_\varepsilon)^n \leq \frac{C}{j},
\]
where $C>0$ is independent of $\varepsilon$ and $j$ (see \eqref{sup-mass-e1}). 
The proof of the theorem is completed.
\end{proof}

Thus, we also proved Theorem~\ref{existence-dmae} by using the  outline above and Theorem~\ref{existence-dmae-1}. 

\bigskip

As in the K\"ahler setting, from Theorems~\ref{existence-dmae}, ~\ref{existence-ma} and Lemma~\ref{decreasing-property-2} we get the following result.

\begin{corollary}
\label{existence-dmae-3}
Let $0\leq f \in L^p(\omega^n)$, $p>1$, be such that 
$\int_X f\omega^n>0$. Let $\varphi\in PSH(\beta)\cap C(X)$ be the unique solution
to 
\[
	(\beta + dd^c \varphi)^n = e^\varphi f \omega^n.
\]
For $0< \varepsilon \leq 1$ let $\varphi_\varepsilon \in PSH(\beta + \varepsilon \omega) \cap C(X)$ be the unique solution to
\[
	(\beta + \varepsilon \omega + dd^c \varphi_\varepsilon)^n 
	= e^{\varphi_\varepsilon} f \omega^n.
\]
Then, $\varphi_\varepsilon \searrow \varphi$ uniformly as $\varepsilon \searrow 0$.
\end{corollary}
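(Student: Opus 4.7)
The plan is to reduce the statement to a monotone convergence of continuous functions and then apply Dini's theorem, using the uniqueness results already established for the degenerate equation.

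First I would verify that $\{\varphi_\varepsilon\}_{0<\varepsilon\leq 1}$ is monotone decreasing as $\varepsilon\searrow 0$. This is a direct application of Lemma~\ref{decreasing-property-2}: given $0<\varepsilon<\varepsilon'\leq 1$, the Hermitian metrics satisfy $\beta+\varepsilon\omega \leq \beta+\varepsilon'\omega$, and $\varphi_{\varepsilon}, \varphi_{\varepsilon'}$ solve
\[
    (\beta+\varepsilon'\omega+dd^c\varphi_{\varepsilon'})^n = e^{\varphi_{\varepsilon'}} f\omega^n, \qquad
    (\beta+\varepsilon\omega+dd^c\varphi_{\varepsilon})^n = e^{\varphi_{\varepsilon}} f\omega^n,
\]
so the lemma, applied with $\mu = f\omega^n$, yields $\varphi_\varepsilon \leq \varphi_{\varepsilon'}$. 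This is exactly the monotonicity already exploited in the proof of Theorem~\ref{existence-dmae-1}.

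Next I would identify the pointwise limit $\tilde\varphi := \lim_{\varepsilon\to 0}\varphi_\varepsilon$ with the unique continuous solution $\varphi$ of the degenerate equation. The construction carried out in the proof of Theorem~\ref{existence-dmae-1} shows that $\tilde\varphi\in \cE^1(X,\beta)$ and satisfies
\[
    (\beta+dd^c\tilde\varphi)^n = e^{\tilde\varphi} f\omega^n
\]
in the weak sense of currents. On the other hand, Theorem~\ref{existence-dmae} (whose proof is completed through the bounded/continuous steps via the volume--capacity estimate and the viscosity approach of \cite{EGZ11}) produces a unique continuous solution $\varphi$ of the same equation. The uniqueness recalled in Remark~\ref{uniqueness-big-case}, which applies to bounded $\beta$-plurisubharmonic solutions and thus a fortiori to continuous ones, forces $\tilde\varphi = \varphi$.

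Finally, to upgrade the pointwise monotone convergence $\varphi_\varepsilon\searrow\varphi$ to uniform convergence, I would invoke Dini's theorem. Each $\varphi_\varepsilon$ is continuous on $X$ by Theorem~\ref{existence-ma}, the limit $\varphi$ is continuous by Theorem~\ref{existence-dmae}, the parameter $\varepsilon\mapsto\varphi_\varepsilon(x)$ is monotone in $\varepsilon$ for every fixed $x$, and $X$ is compact; thus Dini's theorem delivers uniform convergence on $X$.

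The main (and really only) obstacle is the identification step in the second paragraph: one must know that the $\cE^1(X,\beta)$-solution produced as a monotone limit in the proof of Theorem~\ref{existence-dmae-1} is actually the continuous solution coming from Theorem~\ref{existence-dmae}. Without the uniqueness statement of Remark~\ref{uniqueness-big-case}, Dini's theorem would not apply. All other ingredients---the monotonicity from Lemma~\ref{decreasing-property-2} and the continuity of $\varphi_\varepsilon$ and $\varphi$---are already in place.
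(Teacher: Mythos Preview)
Your proposal is correct and follows exactly the route the paper indicates (the paper gives no detailed proof, only the sentence ``As in the K\"ahler setting, from Theorems~\ref{existence-dmae}, \ref{existence-ma} and Lemma~\ref{decreasing-property-2} we get the following result''). Monotonicity from Lemma~\ref{decreasing-property-2}, identification of the decreasing limit with the continuous solution via the construction in Theorem~\ref{existence-dmae-1} together with the boundedness/continuity steps and uniqueness of Theorem~\ref{existence-dmae}, and then Dini's theorem for uniform convergence---this is precisely the intended argument, and you have supplied the details accurately.
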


\bigskip

\section{Applications}
\label{Sect-4}

In this section we consider applications of results in Sections ~\ref{Sect-2} and \ref{Sect-3} to several problems on compact Hermitian manifolds. 
Recall that we use the normalization 
\[
	d^c = \frac{i}{2\pi} (\bar \partial - \partial), \quad 
	dd^c =\frac{i}{\pi} \partial \bar\partial.
\]
We consider the real Bott-Chern cohomology 
group
\[
	H^{1,1}_{BC} (X, \bR) 
	= \frac{\{\mbox{closed real } (1,1)\mbox{-forms}\}}
	{\{ dd^c \phi, \phi \in C^\infty (X, \bR)\}}
\]
Let $\{\beta\}$ denote the class of $\beta$ in $H^{1,1}_{BC} (X, \bR)$. The class  
$\{\beta\}$ is called {\em nef} if for any $\varepsilon>0$ there exists a representative 
$\beta + dd^c \psi$ such that
\[
	\beta + dd^c \psi > - \varepsilon \omega,	
\] 
where $\omega$ is some fixed Hermitian metric on $X$. 

If $\beta \geq 0$, then $\{\beta\}$ is a {\em nef} class. If moreover it has positive highest self-intersection number $\int_X \beta^n>0$, then these assumptions allow us to solve the degenerated Monge-Amp\`ere equation with the background form being $\beta$. This gives a way to construct non-trivial $\beta$-psh functions on $X$. We will get some interesting consequences of this fact.

\subsection{The Tosatti-Weinkove and Demailly-Paun conjectures}
We shall start with a verification of a conjecture of Tosatti and Weinkove \cite{TW12a} under the assumption that $\beta \geq 0$. 
Let us state the result.

\begin{theorem}
\label{tw-conjecture}
Let $X$ be a $n$-dimensional compact complex manifold. Suppose there exists a 
class $\{\beta\} \in H^{1,1}_{BC} (X, \bR)$ which is semi-positive and satisfies 
$\int_X \beta^n >0$. Let $x_1, ..., x_N \in X$ be fixed points and let
$\tau_1, ..., \tau_N$ be positive real numbers so that 
\begin{equation}
\label{tw-assumption}
	\sum_{j=1}^N \tau_j^n < \int_X \beta^n.
\end{equation}
Then there exists a $\beta$-plurisubharmonic function $\varphi$ with logarithmic
poles at $x_1,..., x_N$:
\[
	\varphi(z) \leq \tau_j \log |z| + O(1),
\]
in a coordinate neighbourhood $(z_1, ..., z_n)$ centered at $x_j$, where 
$|z|^2 = |z_1|^2+...+|z_n|^2$.
\end{theorem}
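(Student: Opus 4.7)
The strategy is Demailly's classical construction of singular K\"ahler currents, adapted to the Hermitian setting via Corollary~\ref{S0-cor2}: approximate a prescribed atomic target measure by smooth densities, solve the resulting complex Monge--Amp\`ere equations continuously, and pass to a $\beta$-psh limit with the required logarithmic poles. After rescaling, $\int_X\beta^n=1$. The strict inequality $\sum_j\tau_j^n<1$ lets us choose $\tau_j'>\tau_j$ with $\sum_j(\tau_j')^n<1$; it then suffices to produce $\varphi\in PSH(\beta)$ satisfying $\sup_{B(x_j,r)}\varphi\leq\tau_j'\log r+C$ for all small $r$, since then $\varphi(z)\leq\tau_j'\log|z-x_j|+O(1)\leq\tau_j\log|z-x_j|+O(1)$ near $x_j$.

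Fix disjoint coordinate balls $B_j=B(x_j,r_0)$, smooth non-negative mollifiers $\rho_{j,k}\geq 0$ of $\delta_{x_j}$ supported in $B(x_j,1/k)$ with $\int\rho_{j,k}\omega^n=1$, and a smooth positive function $h$ supported in $X\setminus\bigcup_j\overline{B_j}$ with $\int h\,\omega^n=1-\sum_j(\tau_j')^n>0$. Set $f_k=h+\sum_j(\tau_j')^n\rho_{j,k}$, a smooth positive density with $\int_X f_k\omega^n=1=\int_X\beta^n$ and $f_k\omega^n\to h\omega^n+\sum_j(\tau_j')^n\delta_{x_j}$ weakly. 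By Corollary~\ref{S0-cor2}, for each $k$ there is a unique continuous $\varphi_k\in PSH(\beta)$ with $\sup_X\varphi_k=0$ solving $(\beta+dd^c\varphi_k)^n=f_k\omega^n$. Standard $L^1$-compactness of $\beta$-psh functions with fixed supremum (via the inclusion $PSH(\beta)\subset PSH(C\omega)$ for some $C>0$) gives, after extraction, $\varphi_k\to\varphi\in PSH(\beta)$ in $L^1(X,\omega^n)$ with $\sup\varphi\leq 0$.

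The main step, and the principal obstacle, is to prove a uniform local bound
\[
\sup_{B(x_j,r)}\varphi_k\leq\tau_j'\log r+C,
\]
for all $r\leq r_0/2$ and all $k$ with $1/k\leq r$, with $C>0$ independent of $k$. The argument is local on $B_j$: writing $\beta=dd^c\phi_j$ for a smooth local potential, $u_k:=\varphi_k+\phi_j$ is plurisubharmonic on $B_j$ with $(dd^cu_k)^n=f_k\omega^n$, and $\int_{B(x_j,r)}(dd^cu_k)^n\geq(\tau_j')^n$ whenever $1/k\leq r$. A Demailly-type local comparison of $u_k$ with $\tau_j'\log|z-x_j|+A$ on $B_j$, using the classical Bedford--Taylor comparison principle on small sub-balls, shows that failure of the bound at some point would force the Monge--Amp\`ere mass of $u_k$ to exceed $(\tau_j')^n$ on a shrunken ball, contradicting the mass lower bound. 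Uniformity in $k$ comes from the uniform $L^1(B_j)$-bound on $u_k$, which is a consequence of the $L^1$-compactness of $\varphi_k$. The Hermitian torsion of $\omega$ enters only through the local expression $(dd^cu_k)^n=f_k\omega^n$ and is harmless on a fixed coordinate ball; the essential technical work lies in controlling the constant $C$ uniformly as $k\to\infty$ in the presence of the degeneration $f_k\to\sum_j(\tau_j')^n\delta_{x_j}+h\omega^n$. Once this uniform bound is established, $L^1$-convergence together with the upper semi-continuity of $\varphi$ promotes it to $\sup_{B(x_j,r)}\varphi\leq\tau_j'\log r+C$, yielding the required logarithmic poles $\varphi(z)\leq\tau_j\log|z-x_j|+O(1)$ at each $x_j$.
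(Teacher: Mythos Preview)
Your overall strategy---Demailly's mass concentration via Corollary~\ref{S0-cor2}, extract an $L^1$ limit, then verify the log poles locally---matches the paper's. The gap is in the ``Demailly-type local comparison'' step, which as written does not work and whose stated logic is incoherent.

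You assert that if the bound $\sup_{B(x_j,r)}u_k\le\tau_j'\log r+C$ failed, the comparison principle would force $\int_{B}(dd^c u_k)^n$ to \emph{exceed} $(\tau_j')^n$, ``contradicting the mass lower bound''. Exceeding a lower bound is not a contradiction; and more to the point, there is no such comparison argument available with arbitrary mollifiers $\rho_{j,k}$. The inequality $\int_{B(x_j,r)}(dd^c u_k)^n\ge(\tau_j')^n$ by itself does \emph{not} imply $u_k(z)\le\tau_j'\log|z|+C$: take $u(z)=|z|^2$, which has large Monge--Amp\`ere mass on small balls but no log singularity. Nor can you compare $u_k$ directly with $w(z)=\tau_j'\log|z|+A$: the Bedford--Taylor comparison principle would require $(dd^c u_k)^n\ge(dd^c w)^n=(\tau_j')^n\delta_{x_j}$ as measures, and your $(dd^c u_k)^n=f_k\omega^n$ is absolutely continuous, so it never dominates an atom.

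The paper avoids this precisely by choosing the approximating densities to be Monge--Amp\`ere measures of explicit smooth psh functions: with $\chi$ smooth convex increasing, $\chi(t)=t$ for $t\ge0$, one sets $\gamma_{j,\varepsilon}=dd^c\chi\bigl(\log(|z|/\varepsilon)\bigr)$ and uses the right-hand side $\sum_j\tau_j^n\gamma_{j,\varepsilon}^n+\delta\,\omega^n/\int_X\omega^n$. Then the barrier $u=\tau_j\bigl(\chi(\log(|z|/\varepsilon))+\log\varepsilon\bigr)+C_1$ is \emph{bounded}, satisfies $(dd^c u)^n=\tau_j^n\gamma_{j,\varepsilon}^n\le(dd^c v)^n$ on the coordinate ball, and $u\ge v$ on its boundary for large $C_1$; the classical comparison principle then yields $v\le u$, hence $\psi_\varepsilon\le\tau_j\log(|z|+\varepsilon)+C_2$ with $C_2$ independent of $\varepsilon$. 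The fix to your argument is simply to replace the generic mollifiers $\rho_{j,k}$ by these specific $\gamma_{j,\varepsilon}^n$; everything else you wrote then goes through (and the detour through $\tau_j'>\tau_j$ becomes unnecessary).
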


\begin{remark}
\label{tw12-facts} The conjecture in \cite{TW12a} is stated for $\beta$ being {\em nef} and $\int_X \beta^n>0$, which is motivated by the corresponding result of Demailly on K\"ahler manifolds. We refer the reader to \cite{demailly93} for applications in algebraic geometry. Tosatti and Weinkove proved their conjecture for $n=2,3$ and obtained some partial results for general $n$ (if $X$ is Moishezon and $\{\beta\}$ is a rational class).
\end{remark}


\begin{proof}[Proof of Theorem~\ref{tw-conjecture}] Given Theorem~\ref{existence-dmae-2}, Demailly's mass concentration technique in \cite[Sec. 6]{demailly93} is adaptable immediately (see also \cite{TW12a}). 
For the sake of completeness we repeat it here.
We choose coordinates $(z_1,..., z_n)$ in a neighbourhood centered at $x_j$.
Let $\chi : \bR \rightarrow \bR$ be a smooth, convex, increasing, which satisfies
$\chi(t) = t$ for $t\geq 0$, and $\chi(t) = -\frac{1}{2}$ for $t\leq -1$. Put, for $\varepsilon>0$,
\[
	\gamma_{j, \varepsilon} 
	= dd^c \left( \chi\left(\log\frac{|z|}{\varepsilon}\right)\right),
\]
where $|z|^2 = |z_1|^2 + ...+ |z_n|^2$. As observed in \cite{demailly93} 
$\gamma_{j,\varepsilon}$ is a closed $(1,1)$-positive form on this coordinate chart,
and it is equal to $dd^c \log|z|$ outside the ball $\{|z| \leq 
\varepsilon\}$. Then, we may extend $\gamma_{j, \varepsilon}^n = 0$ for $|z|>\varepsilon$.
Thus, $\gamma_{j, \varepsilon}^n$ is a smooth non-negative $(n,n)$-form on $X$
satisfies
\[
	\int_X \gamma_{j, \varepsilon}^n =1,
\]
and $\gamma_{j, \varepsilon}^n \rightarrow \delta_{x_j}$ the Dirac  measure mass 
at $x_j$ as $\varepsilon \rightarrow 0$. Put 
\[
	\delta = \int_X \beta^n - \sum_{j=1}^n \tau_j^n > 0.
\]
Using  Theorem~\ref{existence-dmae-2} we solve the Monge-Amp\`ere equation
\[
\left( \beta +  dd^c \varphi_\varepsilon \right)^n 
= \sum_{j=1}^N \tau_j^n  \gamma_{j,\varepsilon}^n + \delta \frac{\omega^n}{\int_X\omega^n} 
\]
where 
$
	\varphi_\varepsilon \in PSH(\beta) \cap C(X), 
	\sup_X  \varphi_\varepsilon =0. 
$ 
The family $\{\varphi_\varepsilon: \sup_X \varphi_\varepsilon =0\}_{\varepsilon>0}$ is compact in $L^1(\omega^n)$. 
Then, there exists a subsequence $\varepsilon \rightarrow 0$ such
that $\varphi_\varepsilon $ converges to  a $\varphi \in PSH(\beta)$ in $L^1(\omega^n)$. 

We show now that the funciton $\varphi$ has desired singularities. 
Let
$U$ be a neighbourhood of $x_j$ and suppose that 
\[
	\beta = dd^c h
\]
for $h\in C^\infty(\overline{U})$ on $\overline{U}$. Set 
$v:= \psi_\varepsilon = h+ \varphi_\varepsilon$. Since $h\in C^\infty(\overline{U})$, there exists a uniform constant $C$ such that $v_{|_{\partial U}} \leq C$.  Consider 
\[
	u = \tau_j \left(\chi(\log\frac{|z|}{\varepsilon}) + \log \varepsilon\right)
		+ C_1 ,
\]
where $C_1$ is a large constant. Then for $\varepsilon>0$ small enough
\begin{equation*}
\begin{aligned}
	u_{|_{\partial U}} = \tau_j \log|z| + C_1, 
	\quad v_{|_{\partial U}} \leq C, \\
	(dd^c v)^n \geq \tau_j^n \gamma_\varepsilon^n = (dd^c u)^n \quad 
	\mbox{on } U.
\end{aligned}
\end{equation*}
For $C_1$ sufficiently large, $u\geq v$ on $\partial U$, then it follows from the Bedford-Taylor comparison principle \cite{BT82} that $u \geq v$ on $U$. Hence,
\[
	\psi_\varepsilon \leq \tau_j \log(|z| + \varepsilon) + C_2 \quad
	\mbox{on } U. 
\]
Thus, $\varphi(z) \leq \log|z| + O(1)$ in $U$.
\end{proof}

We turn now to a weak version of a conjecture of Demailly and Paun \cite[Conjecture 0.8]{DP04}. Suppose that a compact $n$-dimensional complex manifold possesses a semi-positive cohomology class $\{\beta\}$ of type $(1,1)$ such that $\int_X \beta^n>0$, then it is conjectured that this manifold belongs to the Fujiki class $\cC$. We are able to prove this for manifolds equipped with a pluriclosed metric. Let us state our result.

\begin{theorem}
\label{dp-semipositive}
Let $(X, \omega)$ be a $n$-dimensional compact complex manifold equipped with the pluriclosed metric $\omega$, i.e. $dd^c \omega =0$. Assume that $X$ possesses a closed semi-positive cohomology class $\{\beta\}$ of type $(1, 1)$ such that 
$\int_X \beta^n > 0$. Then $\{\beta\}$ contains a K\"ahler current $T$, i.e. $T\geq \delta \omega$ for some $\delta >0$.
\end{theorem}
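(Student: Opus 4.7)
The plan is to follow the Chiose-Popovici strategy \cite{chiose13, popovici14}, combined with our Hermitian Monge-Amp\`ere solvability (Theorem~\ref{existence-ma} and Corollary~\ref{existence-dmae-3}), producing the K\"ahler current as a uniform limit of potentials solving an approximating family of MA equations on the Hermitian metrics $\beta+\varepsilon\omega$.

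First, for each $\varepsilon\in(0,1]$ I would apply Theorem~\ref{existence-ma} to obtain a continuous $\varphi_\varepsilon\in PSH(\beta+\varepsilon\omega)$ and a positive constant $c_\varepsilon$ with
$$(\beta+\varepsilon\omega+dd^c\varphi_\varepsilon)^n = c_\varepsilon\omega^n,\qquad \sup_X\varphi_\varepsilon=0.$$
Upper bounds on $c_\varepsilon$ follow from Proposition~\ref{estimate-mass} applied to $\Omega=\beta+\varepsilon\omega$ (whose curvature constant equals $\varepsilon B$): integrating gives $c_\varepsilon\int_X\omega^n\le\int_X(\beta+\varepsilon\omega)^n+\varepsilon B(1+\|\varphi_\varepsilon\|_\infty)^n C$, and the logarithmic growth of $\|\varphi_\varepsilon\|_\infty$ from Lemma~\ref{growth-phi} keeps $c_\varepsilon$ bounded as $\varepsilon\to 0$. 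Analogously, a lower bound on $\int_X T_\varepsilon^n$ in terms of $\int_X\beta^n>0$ keeps $c_\varepsilon$ bounded away from $0$.

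Second, I would argue by contradiction: assume $\{\beta\}$ contains no K\"ahler current. Here the pluriclosedness of $\omega$ enters decisively. Since $d\beta=0$ and $dd^c\omega=0$, we have $dd^c(\beta+\varepsilon\omega)=0$; consequently in Stokes-type manipulations of $(\beta+\varepsilon\omega)^k$ the only surviving torsion contribution comes from $d(\beta+\varepsilon\omega)\wedge d^c(\beta+\varepsilon\omega)=\varepsilon^2\,d\omega\wedge d^c\omega$, an $O(\varepsilon^2)$ remainder. The core of the Chiose-Popovici argument is a Khovanskii-Teissier type mixed inequality
$$\int_X T_\varepsilon^n\cdot\Bigl(\int_X\omega^n\Bigr)^{n-1}\ \le\ \Bigl(\int_X T_\varepsilon\wedge\omega^{n-1}\Bigr)^n+o(1),$$
where $T_\varepsilon=\beta+\varepsilon\omega+dd^c\varphi_\varepsilon$, obtained by iterating the mixed-type inequality of Lemma~\ref{mixine} and absorbing the pluriclosed torsion errors using Proposition~\ref{estimate-mass}. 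Combined with the no-K\"ahler-current hypothesis, which forces the cohomological mixed intersection $\int_X T_\varepsilon\wedge\omega^{n-1}$ to be small, this contradicts the lower bound on $c_\varepsilon$; simultaneously the argument extracts a strict positivity $T_\varepsilon-\delta\omega\ge0$ as currents for a uniform $\delta>0$ and all sufficiently small $\varepsilon$.

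Finally, I would pass to the limit. By the monotonicity Lemma~\ref{decreasing-property-2} and the $L^\infty$-stability of \cite{cuong-kolodziej13}, $\varphi_\varepsilon$ converges uniformly to some $\varphi\in PSH(\beta)\cap C(X)$, exactly as in Corollary~\ref{existence-dmae-3}, and the lower bound $T_\varepsilon\ge\delta\omega$ passes to the weak limit, producing the K\"ahler current $T=\beta+dd^c\varphi\ge\delta\omega$ in $\{\beta\}$. The main obstacle is the middle step: carrying out the Chiose-Popovici mixed inequality rigorously in the Hermitian setting, where the torsion of $\omega$ would ordinarily produce fatal error terms. The pluriclosedness $dd^c\omega=0$ is essential precisely because it kills the most dangerous contributions $dd^c(\beta+\varepsilon\omega)^k$ directly, leaving only $\varepsilon^2$-small remainders from $d\omega\wedge d^c\omega$ that can be absorbed using Proposition~\ref{estimate-mass} and the logarithmic bound of Lemma~\ref{growth-phi}.
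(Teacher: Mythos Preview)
Your proposal has a genuine structural gap: you never translate the hypothesis ``$\{\beta\}$ contains no K\"ahler current'' into a usable quantitative statement, and without that the contradiction cannot be closed. In the paper (following Chiose and Popovici) this is done via Lamari's duality lemma: if for some $\delta_j\searrow 0$ there is no current $\beta+dd^c\psi\ge\delta_j\omega$, then for each $j$ there is a \emph{Gauduchon} metric $g_j$ with
\[
\int_X \beta\wedge g_j^{\,n-1}\ \le\ \delta_j\int_X \omega\wedge g_j^{\,n-1}.
\]
The Monge--Amp\`ere equation that is solved has right-hand side $c_j\,\omega\wedge g_j^{\,n-1}$, not $c_\varepsilon\,\omega^n$, and it is solved directly for the degenerate form $\beta$ via Theorem~\ref{existence-dmae-2}; here $c_j=\int_X\beta^n/\int_X\omega\wedge g_j^{\,n-1}$. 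Popovici's pointwise trace inequality (proved first for the smooth approximations $\beta+\tfrac{1}{s}\omega$ and passed to the limit using Corollary~\ref{existence-dmae-3}) then gives
\[
\int_X\beta_j\wedge g_j^{\,n-1}\cdot\int_X\beta_j^{\,n-1}\wedge\omega\ \ge\ \frac{c_j}{n}\Bigl(\int_X\omega\wedge g_j^{\,n-1}\Bigr)^2,
\]
so the Lamari inequality forces $\delta_j\int_X\beta_j^{\,n-1}\wedge\omega\ge\tfrac{1}{n}\int_X\beta^n$. Your claim that the no-K\"ahler-current assumption makes $\int_X T_\varepsilon\wedge\omega^{n-1}$ small has no justification; $\omega^{n-1}$ need not be Gauduchon even when $dd^c\omega=0$, and nothing prevents that integral from being large.

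You also misidentify where pluriclosedness enters. It is \emph{not} used to suppress torsion in a Khovanskii--Teissier estimate; the mixed inequality above holds on any compact Hermitian manifold. The hypothesis $dd^c\omega=0$ is used only at the final step, to show that $\int_X\omega\wedge\beta_j^{\,n-1}=\int_X\omega\wedge\beta^{\,n-1}$ is a constant $C$ independent of $j$ (Stokes, using $d\beta=0$ and $dd^c\omega=0$). That makes $\delta_j C\ge\tfrac{1}{n}\int_X\beta^n>0$ impossible as $\delta_j\to 0$. Finally, your argument is internally inconsistent: inside a proof by contradiction you claim to ``extract a strict positivity $T_\varepsilon\ge\delta\omega$'' and then pass to the limit to \emph{construct} a K\"ahler current --- but the paper's argument is purely a contradiction and never builds the current explicitly.
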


\begin{remark}
\label{chiose-result} In the case of complex surfaces, i.e. $n=2$, there always exits a Gauduchon (pluriclosed) metric and the theorem is known thanks to the work of N. Buchdahl \cite{buchdahl99, buchdahl00} and A. Lamari \cite{lamari99a, lamari99b}. When $n=3$ as pointed out by Chiose \cite[Remmark 3.3]{chiose13} that the semi-positive assumption of $\{\beta\}$ in Theorem~\ref{dp-semipositive} can be weaken to  the {\em nef} one. Then, Theorem~\ref{dp-semipositive} is only interesting for $n\geq 4$. 
\end{remark}

\begin{remark} By \cite[Theorem 0.7]{DP04} we know that a compact complex manifold which carries a K\"ahler current is in the Fujiki class. 
Recently, Chiose \cite[Theorem 0.2]{chiose14} shows that if $X$ 
belongs to the Fujiki class and possesses a pluriclosed metric, then $X$ is indeed a K\"ahler manifold.
\end{remark}

Our arguments follow the ideas of Chiose \cite{chiose13} who used the  non-degenerate Monge-Amp\`ere equation in \cite{TW10b} to give a simpler  proof of Demailly-Paun's theorem \cite[Theorem 2.12]{DP04}. 
Later on, Popovici \cite[Lemma 3.1]{popovici14} made an observation that can help to simplify some  arguments in \cite{chiose13}. We will also make use of this observation.

Instead of using non degenerate Monge-Amp\`ere equation as in \cite{DP04} \cite{chiose13} we use the one with the degenerate left hand side (Section~\ref{Sect-3}). We do not add a small positive Hermitian metric to the degenerate metric $\beta\geq 0$. The advantage is that the total mass of the left hand side is invariant (=$\int_X \beta^n$). However, {\em a priori} the obtained solutions are only continuous, so we need to extend \cite[Lemma 3.2]{popovici14} to the singular $(1,1)$-forms setting. This also required results in previous sections.

 First, we recall a useful lemma due to Lamari \cite{lamari99a}.

\begin{lemma}
\label{la99}
Let $\alpha$ be a smooth real $(1, 1)$ form. There exists a distribution $\psi$ on $X$ such that $\alpha + dd^c \psi \geq 0$ if and only if
\[
	\int_X \alpha \wedge \gamma^{n-1} \geq 0
\] 
for any Gauduchon metric $\gamma$, i.e. $dd^c \gamma^{n-1} =0$, on $X$.
\end{lemma}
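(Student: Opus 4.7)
The plan is to prove the two implications separately: the necessity direction is a direct application of Stokes' theorem together with the Gauduchon condition, while the sufficiency direction uses a Hahn--Banach separation argument combined with Michelsohn's structure theorem for strictly positive $(n-1,n-1)$-forms.

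For necessity, suppose $\alpha + dd^c\psi \geq 0$ holds in the sense of currents for some distribution $\psi$, and let $\gamma$ be a Gauduchon metric, so that $\gamma^{n-1}$ is a smooth strictly positive $(n-1,n-1)$-form with $dd^c \gamma^{n-1}=0$. Then, pairing currents against test forms,
\[
\int_X \alpha \wedge \gamma^{n-1}
= \int_X (\alpha + dd^c\psi)\wedge \gamma^{n-1} - \langle \psi, dd^c \gamma^{n-1}\rangle
= \int_X (\alpha + dd^c\psi)\wedge \gamma^{n-1} \geq 0,
\]
since the distributional pairing vanishes by the Gauduchon condition, while the remaining integral is non-negative because a positive current paired with a positive test form is non-negative.

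For sufficiency, I would argue by contradiction. Work inside the space of real $(1,1)$-currents on $X$ with the weak-$*$ topology, whose continuous dual is the space of smooth real $(n-1,n-1)$-forms. Consider the convex cone
\[
K = \{\, T - dd^c \psi : T \geq 0 \text{ a positive } (1,1)\text{-current},\; \psi \in \mathcal{D}'(X)\,\},
\]
so that the desired conclusion is exactly $\alpha \in K$. If $\alpha$ lies outside the closure of $K$, Hahn--Banach supplies a smooth real $(n-1,n-1)$-form $\eta$ with $\int_X \alpha \wedge \eta < 0$ and $\langle T - dd^c\psi, \eta\rangle \geq 0$ for every element of $K$. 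Testing the second inequality on $\pm\psi$ (with $T=0$) forces $dd^c \eta = 0$ in the distributional sense, and testing on positive $(1,1)$-currents of Dirac type (with $\psi=0$) shows that $\eta$ is pointwise weakly positive; in bidegree $(n-1,n-1)$ weak and strong positivity coincide, so $\eta$ is a smooth strongly positive $(n-1,n-1)$-form.

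The final step is to promote $\eta$ to the $(n-1)$-st power of a Gauduchon metric so as to contradict the hypothesis. Fix a Gauduchon metric $\gamma_0$ on $X$ (which exists by Gauduchon's theorem), and for $\varepsilon > 0$ set $\eta_\varepsilon := \eta + \varepsilon \gamma_0^{n-1}$; this form is smooth, strictly positive, and still $dd^c$-closed. By Michelsohn's structure theorem, every strictly positive $(n-1,n-1)$-form is uniquely the $(n-1)$-st power of a positive $(1,1)$-form, so $\eta_\varepsilon = \gamma_\varepsilon^{n-1}$ for a smooth positive $(1,1)$-form $\gamma_\varepsilon$; the condition $dd^c \eta_\varepsilon = 0$ then means precisely that $\gamma_\varepsilon$ is Gauduchon. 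The hypothesis gives $\int_X \alpha \wedge \eta_\varepsilon = \int_X \alpha \wedge \gamma_\varepsilon^{n-1} \geq 0$ for each $\varepsilon$; letting $\varepsilon \to 0$ produces $\int_X \alpha \wedge \eta \geq 0$, contradicting the separation. The main obstacle is precisely the appeal to Michelsohn's theorem, which is what lets the separating test form be recognised as the $(n-1)$-st power of a Hermitian metric; once this is taken as input the rest is soft functional analysis plus a routine perturbation.
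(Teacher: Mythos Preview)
The paper does not prove this lemma; it is quoted from Lamari's work and used as a black box in the proof of Theorem~\ref{dp-semipositive}. Your argument is essentially Lamari's original one: Stokes for necessity, and a Hahn--Banach separation combined with Michelsohn's $(n-1)$-st root theorem for sufficiency. So there is no proof in the paper to compare against, and your approach is the standard one.

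One point deserves tightening. Your separation argument only yields $\alpha \in \overline{K}$, whereas you need $\alpha \in K$; you never check that $K$ is closed, and the sum of a closed cone with a closed linear subspace is not closed in general. The fix is short but uses the existence of a Gauduchon metric in an essential way: if $T_j - dd^c\psi_j \to S$ weakly with $T_j \geq 0$, pair against a fixed Gauduchon form $\gamma_0^{n-1}$ to see that $\langle T_j,\gamma_0^{n-1}\rangle \to \langle S,\gamma_0^{n-1}\rangle$, so the masses of the $T_j$ are bounded; extract a weak-$*$ limit $T \geq 0$ along a subsequence, whence $dd^c\psi_j \to T - S$ along that subsequence, and the closed range of $dd^c$ on a compact manifold (elliptic theory) gives $T - S = dd^c\psi$ for some distribution $\psi$. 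With this closedness step inserted your proof is complete.
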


\begin{proof}[Proof of Theorem~\ref{dp-semipositive}] 
We argue by contradiction. Suppose that $\{\beta\}$ is not a K\"ahler current. This means there is a sequence $\delta_j \downarrow 0$, $j\geq 1$, and
there is no representative $\beta + dd^c u$, $u \in \cD(X)$ such that the real  
$(1,1)$ current $\beta + dd^c u - \delta_j\omega$ is positive. By Lemma~\ref{la99} it implies that for every $j \geq1$ there exists a Gauduchon metric $g_j$ such that
\[
	\int_X (\beta - \delta_j \omega) \wedge g_j^{n-1}
	\leq 0.
\]
Put $G_j:= g_j^{n-1}$. Then, the inequality is equivalent to
\begin{equation}
\label{dp-ineq-j-1}
	\int_X \beta \wedge G_j
	\leq \delta_j \int_X \omega \wedge G_j.
\end{equation}
Since $\beta \geq 0$ and $\int_X \beta^n>0$,
using Theorem~\ref{existence-dmae-2} we solve, for $j\geq 1$,
\begin{equation}
\label{dp-ma1}
	(\beta + dd^c v_j)^n 
	= c_j \omega \wedge G_j, \quad
	v_j \in PSH(\beta) \cap C(X), \quad
	\sup_X v_j =0.
\end{equation}
Here, by the Stokes theorem,
\begin{equation}
\label{dp-eq-c-j}
	c_j = \frac{\int_X (\beta + dd^c v_j)^n}{\int_X \omega \wedge G_j} 
	= \frac{\int_X \beta^n}{\int_X \omega \wedge G_j}
	>0.
\end{equation}
Put 
$
	\beta_j = \beta+ dd^c v_j.
$
It is easy to see that
\begin{equation}
\label{dp-ineq-j}
	 \int_X  \beta_j \wedge G_j  = \int_X \beta \wedge G_j
	\leq \delta_j \int_X \omega \wedge G_j.
\end{equation}
Next, we are going to prove that
\begin{equation}
\label{trace-estimate}
	\int_X \beta_j \wedge G_j \cdot \int_X \beta_j^{n-1} \wedge \omega
\geq	\frac{c_j}{n} \left(\int_X \omega \wedge G_j\right)^2.
\end{equation}
We prove it by reducing to the case $\beta_j$ is smooth and positive definite (see \cite[Lemma 3.2]{popovici14}). Let us fix $j$ for a moment as it does not affect our proof. We remark that the reducing process uses in an essential way results in Sections~\ref{Sect-2} and ~\ref{Sect-3}.

\begin{lemma}
\label{step1-tre}
For $0< \varepsilon \leq 1$, let  $v_\varepsilon \in PSH(\beta) \cap C(X)$ be the unique solution to 
\begin{equation}
\label{dp-ma2}
	(\beta + dd^c v_\varepsilon)^n 
= 	e^{\varepsilon v_\varepsilon} \omega \wedge G_j
\end{equation}
(by Theorem~\ref{existence-dmae}). Then, 
\begin{equation}
\label{eq1-step1}
	\int_X (\beta + dd^c v_\varepsilon) \wedge G_j  \cdot
		\int_X (\beta + dd^c v_\varepsilon)^{n-1} \wedge \omega
\geq \frac{1}{n} 
		\left( \int_X e^\frac{\varepsilon v_\varepsilon}{2} \omega \wedge G_j \right)^2.
\end{equation}
\end{lemma}

\begin{proof}[Proof of Lemma~\ref{step1-tre}]
Since $0<\varepsilon \leq 1$ is fixed, by rescaling we work with $\tilde\beta:= \varepsilon \beta$, $\tilde \omega = \varepsilon \omega$, $\tilde G_j := \varepsilon^{n-1} G_j$ and $\tilde v:=\varepsilon v_\varepsilon$ instead of the original ones.  Thus, without lost of generality, we may assume $\varepsilon =1$ and write $u:= v_\varepsilon$. As $j$ is fixed, to avoid confusion of notations later,
in the proof of this lemma we write 
\begin{equation}
\label{step1-tre-e0}
	 g:= g_j \quad \mbox{and} \quad G:=g^{n-1}= G_j.
\end{equation}
Then, $(\beta + dd^c u)^n = e^u \omega \wedge G$ and we need 
to show that
\begin{equation}
\label{step1-tre-e1}
	\int_X (\beta + dd^c u) \wedge G \cdot
		\int_X (\beta + dd^c u)^{n-1} \wedge \omega
\geq \frac{1}{n} 
		\left( \int_X e^\frac{u}{2} \omega \wedge G \right)^2.
\end{equation}
By a theorem of Cherrier \cite[Th\'eor\`em~1, p.373]{cherrier87}, for any $s\geq 1$, we solve $u_s \in PSH(\beta + \frac{1}{s} \omega) \cap C^\infty(X)$  satisfying
\begin{equation}
\label{ma-appr-tr-es}
	\left(\beta + \frac{1}{s} \omega + dd^c u_s \right)^n 
	= e^{u_s} \omega \wedge G
\end{equation}
with 
\[
	\tau_s:= \beta + \frac{1}{s} \omega + dd^c u_s >0.
\]
We know from Corollary~\ref{existence-dmae-3} that $u_s \searrow u \in PSH(\beta) \cap C(X)$ as $s \to +\infty$. Since $G$ is fixed, it follows from the Bedford-Taylor  convergence theorem \cite{BT82} that 
$\tau_s \wedge G \rightarrow (\beta + dd^c u) \wedge G$ 
and $\tau_s^{n-1}\wedge \omega \rightarrow (\beta + dd^c u)^{n-1} \wedge \omega$ 
weakly as $s \to +\infty$. That means
\[
	\int_X (\beta + dd^c u) \wedge G  \cdot
	\int_X (\beta + dd^c u)^{n-1} \wedge \omega
=	\lim_{s \to + \infty} 
	\int_X \tau_s \wedge G  \cdot \int_X \tau_s^{n-1} \wedge \omega.
\]

\begin{claim}
\label{tr-es-sm}
$
\int_X \tau_s \wedge G  \cdot \int_X \tau_s^{n-1} \wedge \omega
\geq 		\frac{1}{n} \; \left( \int_X \sqrt{\frac{\tau_s^n}{\omega^n} \frac{\omega\wedge G}
		{\omega^n}} \; \omega^n \right)^2.
$
\end{claim}


\begin{proof}[Proof of Claim~\ref{tr-es-sm}]
Since the datum are smooth, we write 
\[
	\tau_s \wedge G = \frac{\tau_s \wedge G}{\omega^n} \; \omega^n,
	\quad
	\tau_s^{n-1} \wedge \omega = \frac{\tau_s^{n-1} \wedge \omega}{\omega^n} \; \omega^n.
\]
It follows from the Cauchy-Schwarz inequality that
\[
	\int_X \tau_s \wedge G \cdot \int_X \tau_s^{n-1} \wedge \omega
\geq \left( \int_X \sqrt{\frac{\tau_s \wedge G}{\omega^n} \cdot \frac{\tau_s^{n-1} \wedge \omega}{\omega^n}} \; \omega^n \right)^2.
\]
As $G= g^{n-1}$ (see \eqref{step1-tre-e0}), to get Claim~\ref{tr-es-sm} it's enough to verify that
\[
	\frac{\tau_s \wedge g^{n-1}}{\omega^n} \cdot \frac{\tau_s^{n-1} \wedge \omega}{\omega^n}
\geq \frac{1}{n} \; \frac{\tau_s^n}{\omega^n} \cdot \frac{\omega\wedge g^{n-1}}{\omega^n}.
\]
This inequality is equivalent to
\[
	\frac{\tau_s \wedge g^{n-1}}{g^n} \cdot \frac{g^n}{\omega^n} \cdot\frac{\tau_s^{n-1} \wedge \omega}{\tau_s^n} \cdot \frac{\tau_s^n}{\omega^n}
\geq	 \frac{1}{n} \; \frac{\tau_s^n}{\omega^n} \cdot \frac{\omega\wedge g^{n-1}}{\omega^n}.
\]
Eliminating $\frac{\tau_s^n}{\omega^n}$ on both 
sides  and using the definition of trace, we reformulate it as
\begin{equation}
\label{eq1-cl-tr-es-sm}
	\frac{1}{n^2} \; (tr_{g} \tau_s)(tr_{\tau_s} \omega) \cdot \frac{g^n}{\omega^n}
\geq  \frac{1}{n} \; \frac{\omega\wedge g^{n-1}}{\omega^n}.
\end{equation}
By \eqref{eq3-cl-tr-es-sm} below, we get that
\[
	\frac{1}{n^2} \; (tr_{g} \tau_s)(tr_{\tau_s} \omega) \cdot \frac{g^n}{\omega^n}
\geq \frac{1}{n^2} \; tr_{g} \omega \cdot \frac{g^n}{\omega^n} 
=	\frac{1}{n} \; \frac{\omega \wedge g^{n-1}}{\omega^n}.
\]
Thus, \eqref{eq1-cl-tr-es-sm} is proved. It is left to prove 
\begin{equation}
\label{eq3-cl-tr-es-sm}
	(tr_{g} \tau_s) (tr_{\tau_s} \omega)  
\geq  tr_{g} \omega.
\end{equation}
This is a point-wise inequality. At $O \in X$, we choose a local
coordinate such that
\[
	g(O) = \sum dz_l \wedge d\bar{z}_l, \quad
	\tau_s(O) = \sum a_l dz_l \wedge d\bar{z}_l, \quad
	\omega(O) =	\sum b_{lm} dz_l \wedge d\bar{z}_m,
\]
where $a_l>0$ and the Hermitian matrix $(b_{lm})$ is positive definite.
Therefore,
\[
	tr_{g}\tau_s = \sum a_l, \quad
	tr_{\tau_s} \omega = \sum \frac{b_{ll}}{a_l},\quad
	tr_{g}\omega = \sum b_{ll}.
\]
Hence, \eqref{eq3-cl-tr-es-sm} follows by an elementary calculation.
Claim~\ref{tr-es-sm} is proved.
\end{proof}
It follows from Claim~\ref{tr-es-sm} and \eqref{ma-appr-tr-es} that
\[
	\int_X \tau_s \wedge G \cdot \int_X \tau_s^{n-1} \wedge \omega
\geq		\frac{1}{n} \left( \int_X e^\frac{u_s}{2} \omega \wedge G \right)^2.
\]
Let $s \to +\infty$, we get \eqref{step1-tre-e1}. We finished the proof of Lemma~\ref{step1-tre}.
\end{proof}

We are ready for the verification of \eqref{trace-estimate}. By \eqref{dmae-2-e2} in
Theorem~\ref{existence-dmae-2} for fixed $j$ and fixed $x \in X$ we  have
\[
	c_j = \lim_{\varepsilon \to 0} e^{\varepsilon v_\varepsilon (x)}
\]
where $c_j$ and $v_\varepsilon$ are from \eqref{dp-ma1} and
\eqref{dp-ma2}, respectively. It also follows from Theorem~\ref{existence-dmae-2} that
\[
	v_\varepsilon  - \sup_X v_\varepsilon \rightarrow v_j \quad
	\mbox{ uniformly as } \varepsilon \searrow 0.
\]
Since the left hand side of \eqref{eq1-step1} does not change if we replace
$v_\varepsilon$ by $v_\varepsilon - \sup_X v_\varepsilon$, letting 
$\varepsilon \to 0$ gives us
\[
	\int_X \beta_j \wedge G_j \cdot \int_X \beta_j^{n-1} \wedge \omega
\geq	\frac{c_j}{n} \left(\int_X \omega \wedge G_j \right)^2.
\]
Combining this with \eqref{dp-eq-c-j} and \eqref{dp-ineq-j} we get that
\[
	\delta_j\int_X \beta_j^{n-1} \wedge \omega \geq \frac{\int_X \beta^n}{n}.
\]
We use now our assumption on $\omega$.
Since $dd^c \omega =0$, the Stokes theorem gives 
\[
	\int_X \omega \wedge \beta_j^{n-1} 
	= \int_X \omega \wedge (\beta + dd^c v_j)^{n-1}
	= \int_X \omega \wedge \beta^{n-1}
	= C < + \infty.
\]
Therefore,
\[
	\delta_j C \geq \frac{\int_X \beta^n}{n} \quad\mbox{for every} \quad
	 \delta_j \searrow 0.
\]
It is not possible. Thus, the proof is complete.
\end{proof}

\subsection{The Chern-Ricci flow on smooth minimal models of general type}
\label{S4-2}

The Chern-Ricci flow is the analogue of K\"ahler-Ricci flow on Hermitian manifolds.
Thanks to the works of Gill \cite{gill11, gill13} and Tosatti-Weinkove \cite{TW12b, TW12c, TWYang13}, among others,  interesting results on the Chern-Ricci flow have been proved. Those results are often inspired by corresponding results of the K\"ahler-Ricci flow. The authors applied also the Chern-Ricci flow to investigate open problems in Hermitian geometry \cite{TW12c, TWYang13}.
We refer the reader to those papers and references therein for more details on the Chern-Ricci flow and its applications.

We are interested in recent preprint by Gill \cite{gill13} in which he obtained a generalisation of a result of Tsuji \cite{tsuji88} and Tian-Zhang \cite{tian-zhang06}. We wish to use the pluripotential technique to improve his result as it is done in \cite{EGZ09}. Roughly speaking we use the elliptic complex Monge-Amp\`ere equation to improve the results obtained by the parabolic one. It seems to be known for the experts (see Remark~\ref{gill13-egz11} below) but we feel it is worthwhile to write down it here for the record and it is also a consequence of our results.

We first give a setup which is taken from \cite{gill13}. Let $\omega_0$ be a fixed 
Hermitian metric on $X$. Then, the normalised Chern-Ricci has the form
\begin{equation}
\label{ncr-flow}
\begin{cases}
	\frac{\partial }{\partial t} \omega(t) = - Ric(\omega (t)) - \omega(t) \\
	\omega(0) = \omega_0,
\end{cases}
\end{equation}
where $Ric(\omega(t)) = -i \partial \bar \partial \log [\omega(t)]^n$.

We shall investigate the flow on a special class of Hermitian manifolds. 

\begin{definition}
\label{special-hermitian-manifold}
A Hermitian manifold is called smooth minimal model of general type if 
$c_1^{BC}(K_X)$ is {\em nef} and $K_X$ is a {\em big} line bundle.
\end{definition}

On such a manifold there is a singular set which plays an important role
in studying the regularity of the flow \eqref{ncr-flow}. It is defined in \cite{collins-tosatti13}.
\begin{definition}
\label{null-loc}
The null locus of a smooth minimal model of general type is defined to be
\[
	E := 
\bigcup \left\{
	V \subset X: V \mbox{ is a subvariety, } dim V = k,  \; \int_V \left(c_1^{BC}(K_X)\right)^k =0
\right\}.
\]
\end{definition}

Gill has obtained the following 
\begin{theorem}[\cite{gill13}]
\label{gill-13}
Let $X$ be a smooth minimal model of general type and $E$ is its null locus. Then the normalized Chern-Ricci flow has smooth solution for all time $t\geq 0$ and 
\[
	\omega(t) \to \omega_{KE} \quad \mbox{ as } t \to +\infty
\]
in the weak sense, where $\omega_{KE}$ is a closed positive $(1,1)$-current.
Moreover, the convergence is in $C^\infty_{loc}(X \setminus E)$ and 
\[
	Ric(\omega_{KE}) 
	= - \omega_{KE} \quad \mbox{ on } X \setminus E.
\]
\end{theorem}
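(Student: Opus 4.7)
The plan is to follow the strategy that goes back to Tsuji and Tian--Zhang, adapted to the Hermitian setting in the spirit of Gill \cite{gill11} and Tosatti--Weinkove \cite{TW12b,TW12c}. First, I would recast the flow \eqref{ncr-flow} as a parabolic complex Monge--Amp\`ere equation. Since $K_X$ is nef, pick any smooth representative $\chi \in c_1^{BC}(K_X)$ (which need not be semi-positive on all of $X$) and use the reference family $\hat\omega_t = e^{-t}\omega_0 + (1-e^{-t})\chi$. Writing $\omega(t) = \hat\omega_t + dd^c \varphi(t)$ and using $-Ric(\omega_0) = \chi + dd^c h$ for some smooth $h$, one checks that the flow is equivalent to
\[
\frac{\partial \varphi}{\partial t} = \log \frac{(\hat\omega_t + dd^c \varphi)^n}{\Omega} - \varphi, \qquad \varphi(0) = 0,
\]
for a fixed smooth volume form $\Omega$ on $X$. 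Long-time existence then reduces to a priori estimates for this equation.

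Next I would run the standard parabolic a priori estimates. The $C^0$ estimate follows from the maximum principle applied to $\varphi$ and $\dot\varphi$: comparing $\varphi$ with a subsolution built from a quasi-psh potential $\rho$ of the Kähler current in $c_1^{BC}(K_X)$ (which exists because $K_X$ is big, by \cite{demailly-pali10}) gives $\sup_X |\varphi(t)| \leq C$. The second-order estimate is the Chern--Ricci analogue of the Aubin--Yau/Tosatti--Weinkove Laplacian inequality from \cite{TW10b, gill11}, which here gives $\mathrm{tr}_{\omega_0}\omega(t) \leq C\,e^{A(\varphi - \inf \varphi)}$ on compact subsets where $\rho$ is bounded, the torsion of $\omega_0$ being absorbed by a sufficiently large constant $A$. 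Higher regularity then comes from Evans--Krylov together with Schauder bootstrapping, all of which extend to the Hermitian setting once the Laplacian bound is in hand. This yields smoothness of $\omega(t)$ for all $t \geq 0$.

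For the limit $t \to \infty$, I would pass through the elliptic degenerate equation. The $\dot\varphi$ equation and the maximum principle show $\|\dot\varphi(t)\|_\infty$ is uniformly controlled and, combined with monotonicity arguments $\grave a\ la$ Tian--Zhang, produce a uniform $L^1$ (in fact $L^\infty$) limit $\varphi_\infty \in PSH(\chi) \cap L^\infty(X)$ satisfying, weakly,
\[
(\chi + dd^c \varphi_\infty)^n = e^{\varphi_\infty}\,\Omega.
\]
This is exactly the degenerate Monge--Amp\`ere equation solved by Theorem \ref{existence-dmae} (with $\beta$ taken to be a semi-positive form in $c_1^{BC}(K_X)$, which exists because $K_X$ is nef and big, together with the solvability results just proved). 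Uniqueness of continuous solutions (Remark \ref{uniqueness-big-case}) identifies the limit candidate and gives uniform convergence $\varphi(t) \to \varphi_\infty$. Setting $\omega_{KE} := \chi + dd^c \varphi_\infty$ yields a closed positive $(1,1)$-current that solves $Ric(\omega_{KE}) = -\omega_{KE}$ in the weak sense.

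Finally, smoothness on $X \setminus E$ and $C^\infty_{loc}$ convergence: on $X \setminus E$ the class $c_1^{BC}(K_X)$ contains a current that is strictly positive (in fact smooth) in a neighbourhood of each point, by Collins--Tosatti \cite{collins-tosatti13} combined with Theorem \ref{tw-conjecture}. Using such a strictly positive reference form locally as a lower barrier in the Laplacian estimate, one obtains locally uniform $C^2$ bounds on $\omega(t)$ on compact subsets of $X \setminus E$, and the standard bootstrap then upgrades this to $C^\infty_{loc}$ convergence of $\omega(t)$ to $\omega_{KE}$. The hardest step will be the uniform second-order estimate on $X\setminus E$: the torsion terms of $\omega_0$ and the degeneration of $\chi$ near $E$ interact, so one has to choose the auxiliary barrier carefully and control the growth of $\mathrm{tr}_{\omega_0} \omega(t)$ in terms of the distance to $E$, which is the technical core of the argument as in \cite{gill13, collins-tosatti13}.
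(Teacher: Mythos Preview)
This theorem is not proved in the paper at all: it is stated with the attribution [\cite{gill13}] and quoted as a known result of Gill, which the paper then \emph{uses} (together with Lemma~\ref{C-T-13}) as input for its own Theorem~\ref{gill13-improvement}. There is thus no proof in the paper to compare your proposal against.

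Your outline is nonetheless a reasonable sketch of how such an argument runs (parabolic reduction, maximum-principle $C^0$ bounds, the Hermitian Laplacian estimate of \cite{TW10b, gill11}, Evans--Krylov, and a barrier on $X\setminus E$). Two caveats are worth noting. First, you assert that a smooth semi-positive representative of $c_1^{BC}(K_X)$ exists ``because $K_X$ is nef and big''; nef plus big does not by itself produce a \emph{smooth semi-positive} form in the class. The paper obtains such a $\beta$ from the fact that $X$ is Moishezon (see the paragraph preceding Theorem~\ref{gill13-improvement}), whereas Gill's own argument works with a merely nef reference form together with the Collins--Tosatti potential of Lemma~\ref{C-T-13}. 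Second, you invoke Theorem~\ref{existence-dmae}, Remark~\ref{uniqueness-big-case}, and Theorem~\ref{tw-conjecture} from \emph{this} paper to establish a result that the paper itself cites as prior work; that reverses the logical order of the exposition. In particular, the local strict positivity on $X\setminus E$ needed for the second-order estimate comes directly from Lemma~\ref{C-T-13}, not from Theorem~\ref{tw-conjecture}.
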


$E$ is the smallest set that we can take because of a result of 
Collins and Tosatti \cite{collins-tosatti13}. It is a generalisation of  \cite[Theorem 0.5]{DP04} to manifolds in the Fujiki class.

\begin{lemma}[Collins-Tosatti '13]
\label{C-T-13}
There exists $\psi \in L^1(X)$ such that
\[
	\beta + i \partial\bar\partial \psi \geq c_0 \omega_0
\]
for some $c_0>0$. The function $\psi \in C^\infty(X\setminus E)$, where $E$ is the null locus of $X$ and $E =  \{\psi = -\infty\}$.
\end{lemma}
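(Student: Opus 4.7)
The plan is to combine three ingredients: (i) existence of a K\"ahler current in the class $\{\beta\} = c_1^{BC}(K_X)$, obtained via the degenerate Monge-Amp\`ere machinery developed in Section~\ref{Sect-3}; (ii) Demailly's analytic approximation to upgrade to potentials with analytic singularities; and (iii) an extremal envelope whose singular set is identified with the null locus $E$.

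First I would show that $\{\beta\}$ contains a K\"ahler current. Since $\beta$ is nef with $\int_X \beta^n > 0$ (as $K_X$ is big), a perturbation argument using the classes $\{\beta + \varepsilon \omega_0\}$ together with the mass-concentration method already used in the proof of Theorem~\ref{tw-conjecture} produces, in the limit $\varepsilon \to 0$, a $\beta$-psh function $\rho$ with $\beta + dd^c \rho \geq 2c_0 \omega_0$ in the sense of currents, for some $c_0 > 0$. The input here is Theorem~\ref{existence-dmae-2} applied to a right-hand side that concentrates mass away from a fixed proper subvariety, together with a diagonal argument.

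Next, since Demailly's regularization is local and does not require the Fujiki property, I would replace $\rho$ by a decreasing sequence $\rho_j$ of $\beta$-psh functions with analytic singularities such that $\beta + dd^c \rho_j \geq c_0 \omega_0$ holds up to an error $\varepsilon_j \omega_0$ with $\varepsilon_j \to 0$. I would then form the envelope
\[
\psi := \Bigl( \sup \bigl\{ u \in PSH(\beta) : u \leq 0 \text{ and } \beta + dd^c u \geq c_0 \omega_0 \bigr\} \Bigr)^{*},
\]
where $^{*}$ denotes upper semicontinuous regularization. The family is non-empty by Step 1, so $\psi$ is $\beta$-psh, belongs to $L^1(X)$, and satisfies $\beta + dd^c \psi \geq c_0 \omega_0$ by a standard Choquet-type argument.

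The crux is to identify $\{ \psi = -\infty \}$ with $E$. The easy inclusion $E \subset \{\psi = -\infty\}$ goes by contradiction: if $V$ is an irreducible $k$-dimensional subvariety with $\int_V \beta^k = 0$ and $\psi$ were locally bounded on a regular point of $V$, then restricting $\beta + dd^c \psi \geq c_0 \omega_0$ to the smooth locus of $V$ and integrating via Bedford-Taylor would yield $\int_V \beta^k \geq c_0^k \int_V \omega_0^k > 0$. The reverse inclusion is the hard step: on a neighborhood of any point $x \notin E$, the transcendental intersection theory of Boucksom-Demailly-Paun-Peternell (extended to this Hermitian setting by Collins-Tosatti) produces a $\beta$-psh comparison function that is bounded near $x$ and competes in the sup defining $\psi$, forcing $\psi(x) > -\infty$. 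Finally, the smoothness $\psi \in C^{\infty}(X \setminus E)$ follows from the fact that the envelope characterization forces the Monge-Amp\`ere mass of $(\beta + dd^c \psi)^n$ to vanish on the non-contact set off $E$, together with Evans-Krylov and Schauder bootstrap applied using the uniform lower bound $\beta + dd^c \psi \geq c_0 \omega_0$.

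The principal obstacle is the inclusion $\{\psi = -\infty\} \subset E$, where one must rule out Lelong-number concentration outside the null locus. This is precisely the place where the Hermitian (non-Fujiki) setting requires genuine new input: the weak-solution framework built in Sections~\ref{Sect-2}--\ref{Sect-3} is what enables the intersection-theoretic arguments of Collins-Tosatti to go through without assuming $X$ is in the Fujiki class.
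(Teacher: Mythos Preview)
The paper does not give its own proof of this lemma: it is stated as a citation of Collins--Tosatti \cite{collins-tosatti13} (note the attribution in the lemma heading and the sentence preceding it, ``$E$ is the smallest set that we can take because of a result of Collins and Tosatti''). In the application at hand $X$ is a smooth minimal model of general type, hence Moishezon, hence in the Fujiki class; so the original Collins--Tosatti theorem applies verbatim, and the paper simply invokes it. Your framing --- that the Hermitian machinery of Sections~\ref{Sect-2}--\ref{Sect-3} is what makes the Collins--Tosatti argument go through outside the Fujiki class --- is therefore a misreading of how the lemma is being used.

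Beyond that, your outline has a real gap at the smoothness step. The upper envelope
\[
\psi = \Bigl(\sup\{\,u\in PSH(\beta): u\le 0,\ \beta+dd^c u\ge c_0\omega_0\,\}\Bigr)^{*}
\]
is, under the best known regularity results for envelopes of this type, only $C^{1,1}$ on the ample locus, not $C^\infty$. Your proposed bootstrap fails because on the non-contact set the relevant equation is $(\beta+dd^c\psi-c_0\omega_0)^n=0$, which is \emph{degenerate} elliptic; Evans--Krylov does not apply, and indeed one expects the Hessian of $\psi$ to jump across the free boundary. Collins--Tosatti do not use an envelope: they first prove that the non-K\"ahler locus (the intersection of the singular sets of all K\"ahler currents with analytic singularities in $\{\beta\}$) coincides with the null locus $E$, and then obtain a single $\psi$ smooth off $E$ by taking a finite regularized maximum of K\"ahler-current potentials with analytic singularities whose polar sets together cut out exactly $E$. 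The hard inclusion $\{\psi=-\infty\}\subset E$ that you flag is indeed the main content of their paper, and your sketch does not supply an argument for it.
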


On the projective manifold of general type, it is showed in \cite{EGZ09} that 
the solution constructed by the K\"ahler-Ricci flow \cite{tsuji88}, \cite{tian-zhang06}, coincides with the solution constructed by the elliptic Monge-Amp\`ere equation. Therefore, the potential of $\omega_{KE}$ is continuous \cite{EGZ09, EGZ11}.

Thanks to our results in previous sections and the arguments in \cite{EGZ09} we are able to get the same statement for the Chern-Ricci flow on smooth minimal models of general type. To state our result we need some notation. First, as $X$ is Moishezon, there exists a smooth closed semi-positive $(1,1)$-form $\beta$ such that 
\[
	\{\beta\} = - c_1^{BC}(X).
\]
Moreover, we can pick a Hermitian metric $\Omega$ satisfying
\[
	\beta = i \partial \bar\partial \log \Omega^n, \quad
	\int_X \Omega^n = \int_X \omega_0^n.
\]
As $K_X$ is big we have 
\[
	\int_X \beta^n >0.
\]
\begin{theorem}
\label{gill13-improvement}
The closed positive $(1,1)$-current $\omega_{KE}$ in Theorem~\ref{gill-13} 
has a unique continuous potential, i.e. 
there is a unique continuous function $\varphi$ such that
\[
	(\beta + dd^c \varphi)^n = e^\varphi \Omega^n,
	\quad \mbox{with} \quad
	\omega_{KE} = \beta + dd^c \varphi \geq 0,
\]
in the weak sense on $X$ and in $C^\infty_{loc}(X\setminus E)$.
\end{theorem}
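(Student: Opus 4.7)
The plan is to construct a continuous $\varphi$ directly from the elliptic theory developed in Sections~\ref{Sect-2} and \ref{Sect-3}, and then show it is the potential of the limit current $\omega_{KE}$ produced by the Chern-Ricci flow. Since $\Omega$ is a Hermitian metric, the density $f:=\Omega^n/\omega_0^n$ is smooth and strictly positive, hence lies in $L^p(\omega_0^n)$ for every $p>1$, with $\int_X f\,\omega_0^n=\int_X\Omega^n>0$. Moreover the fact that $K_X$ is big gives $\int_X\beta^n>0$, so $\beta$ satisfies the normalization hypothesis of Section~\ref{Sect-3}. Theorem~\ref{existence-dmae} produces a unique $\varphi\in PSH(\beta)\cap C(X)$ solving $(\beta+dd^c\varphi)^n=e^\varphi\Omega^n$, and the uniqueness in the class of bounded $\beta$-psh functions follows from the classical comparison principle recalled in Remark~\ref{uniqueness-big-case}.

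Next, I would identify $\omega_{KE}$ with $\beta+dd^c\varphi$. With the normalization $\int_X\Omega^n=\int_X\omega_0^n$ and $\beta=i\partial\bar\partial\log\Omega^n$, the standard reduction of \eqref{ncr-flow} writes $\omega(t)=\beta+e^{-t}(\omega_0-\beta)+dd^c u(t)$, where $u$ obeys a parabolic Monge-Amp\`ere equation whose formal steady state is precisely $(\beta+dd^c u)^n=e^u\Omega^n$. By Theorem~\ref{gill-13} one has $u(t)\to u_\infty$ in $C^\infty_{loc}(X\setminus E)$ with $\omega_{KE}=\beta+dd^c u_\infty$, and this limit equation holds classically on $X\setminus E$. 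Using Gill's global $C^0$ estimate for $u(t)$, the function $u_\infty$ extends to a bounded $\beta$-psh function on all of $X$; the extension is automatic since $E$ is pluripolar (being contained in $\{\psi=-\infty\}$ for the Collins-Tosatti quasi-psh function from Lemma~\ref{C-T-13}) and $u_\infty$ is bounded away from it. Then both $\varphi$ and $u_\infty$ are bounded $\beta$-psh solutions of the same equation with $L^p$ right-hand side, so the stability estimate indicated in Remark~\ref{uniqueness-big-case} forces $u_\infty=\varphi$, proving $\omega_{KE}=\beta+dd^c\varphi$ globally in the weak sense.

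For the smoothness on $X\setminus E$, I would invoke Lemma~\ref{C-T-13}: the Collins-Tosatti function $\psi$ satisfies $\beta+dd^c\psi\geq c_0\omega_0$ and is smooth on $X\setminus E$. Fixing $\delta\in(0,1)$ and forming $\varphi_\delta:=(1-\delta)\varphi+\delta\psi$, the form $\beta+dd^c\varphi_\delta\geq\delta c_0\omega_0$ is a genuine Hermitian metric on $X\setminus E$, so the equation is uniformly elliptic there. A local $C^0$ estimate (already available from continuity of $\varphi$), a Laplacian (Aubin-Yau / Cherrier) estimate, and the Evans-Krylov theorem together with Schauder bootstrap upgrade $\varphi$ to $C^\infty_{loc}(X\setminus E)$. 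Alternatively the identification $\omega_{KE}=\beta+dd^c\varphi$ already gives this smoothness for free from the conclusion of Theorem~\ref{gill-13}; one may also use the viscosity approach of \cite{EGZ11} for continuity plus the non-degeneracy of the equation on $X\setminus E$.

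The main obstacle is the matching step in the second paragraph: the flow convergence in Theorem~\ref{gill-13} is only $L^1$ globally and $C^\infty_{loc}$ on $X\setminus E$, so a priori $u_\infty$ might fail to be a bona fide bounded $\beta$-psh function on $X$ satisfying the equation in the pluripotential sense across $E$. Overcoming this requires both Gill's uniform estimate along the flow (to propagate boundedness through the limit) and the pluripotential uniqueness recalled in Remark~\ref{uniqueness-big-case}, which in turn rests on the volume-capacity inequality of Proposition~\ref{vol-cap-ine}. Once $u_\infty$ is recognized as a bounded weak solution on the whole of $X$, uniqueness finishes the identification and the theorem follows.
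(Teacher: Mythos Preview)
Your proposal is correct and follows precisely the route the paper indicates: the paper itself gives no detailed proof, merely stating that with Lemma~\ref{C-T-13} in hand the argument follows \cite[Propositions~4.3, 4.4]{EGZ09}, and your write-up is a faithful fleshing out of that outline, using Theorem~\ref{existence-dmae} for the elliptic solution and the uniqueness from Remark~\ref{uniqueness-big-case} to match it with Gill's flow limit. Your identification of the matching step (boundedness and $\beta$-psh extension of $u_\infty$ across $E$) as the only delicate point is exactly right, and is handled the same way in \cite{EGZ09}.
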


\begin{remark}
\label{gill13-egz11}
Under the assumptions of Theorem~\ref{gill-13}, $X$ belongs to the Fujiki class. Then, the results in \cite{EGZ11} and \cite{collins-tosatti13} can be used to repeat all arguments of \cite{EGZ09}. Thus, we would get another proof of Theorem~\ref{gill13-improvement}.
\end{remark}

Having the above lemma the proof  of Theorem~\ref{gill13-improvement} follows the lines of that of \cite[Proposition 4.3, 4.4]{EGZ09}.

\end{document}